\newtheorem {thm}{Theorem}
\newtheorem {cor}[thm]{Corollary}
\newtheorem {lem}[thm]{Lemma}
\newtheorem {prop}[thm]{Proposition}
\theoremstyle{definition}
\newtheorem {defi}[thm]{Definition}
\newtheorem {rem}[thm]{Remark}
\newtheorem {exa}[thm]{Example}
\newcommand{\ord}{\mathrm{ord}}
\DeclareMathOperator{\Gal}{Gal}
\newcommand{\kbar}{\overline{k}}
\newcommand{\Z}{\mathbb{Z}}
\newcommand{\ch}{\mathrm{char}}
\newcommand{\tk}{\widetilde{k}}
\newcommand{\calO}{\mathcal{O}}
\newcommand{\mpp}{\mathfrak{m}}
\newcommand{\trdeg}{\mathrm{trdeg}}
\newcommand{\Prime}{\mathrm{Prime}}
\title{Kummer theory for function fields}
\author{Félix Baril Boudreau and Antonella Perucca}
\date{}
\begin{document}
	
	\begin{abstract}
		We develop Kummer theory for algebraic function fields in finitely many transcendental variables. We consider any finitely generated  Kummer extension (possibly, over a cyclotomic extension) of an algebraic function field, and describe the structure of its Galois group. Our results show in a precise sense how the questions of computing the degrees of these extensions and of computing the group structures of their Galois groups reduce to the corresponding questions for the Kummer extensions of their constant fields.
	\end{abstract}
	
	\keywords{Galois group, Kummer Extension, Kummer Theory, Algebraic Function Field, Divisibility Parameter}
	
	\subjclass[2020]{11R18, 11R32, 11R58, 11R60, 11T22}
	
	\maketitle
	
	
	\section{Introduction}
	
	Let $n \geq 1$ be an integer and let $K$ be a field containing a primitive $n$-th root of unity. A Kummer extension of $K$ is a field extension obtained by adjoining a finite number of $n$-th roots of elements of $K$. This extension is Abelian of exponent dividing $n$. Conversely, Kummer theory says that any finite Abelian extension of $K$ of exponent dividing $n$ can be obtained by taking the $n$-th roots of finitely many elements of $K^\times$, the multiplicative subgroup of $K$. Moreover, Abelian extensions of $K$ of exponent dividing $n$ can then be seen to correspond bijectively with subgroups of the quotient group $K^\times/K^{\times n}$. In this work, we develop Kummer theory for arbitrary algebraic function fields and go beyond the classical theory: Using parameters that measure the divisibility of elements up to constants, we completely solve the problem of understanding the nature of Kummer theory over algebraic function fields by showing in a precise way that this theory reduces to the Kummer theory over the corresponding constant fields.
	
	We consider a function field $K/k$ and suppose without loss of generality that $k$ is algebraically closed in $K$. For any element
	$\alpha \in K^\times\setminus k^\times$, and any prime number $\ell\neq \ch(k)$, we can speak of its \emph{$\ell$-divisibility parameter modulo constants over $K$}. This is the largest non-negative integer $D$ such that $\alpha\in k K^{\times \ell^D}$.
	
	For an integer $M \geq 1$ not divisible by $\ch(k)$, and a choice $\zeta_M$ of primitive $M$-th root of unity in a fixed algebraic closure $\overline{K}$ of $K$, the field $K(\zeta_M)$ is a cyclotomic extension of $K$. Given a finitely generated subgroup $G$ of $K^\times$, we consider the Kummer extensions
	\begin{equation}\label{Kummer}
		K(\zeta_{M}, \sqrt[N]{G})/K(\zeta_{M})\qquad \text{where $N\mid M$}\,.
	\end{equation}
	In this article we show that, to compute the degree and the structure of the Galois group of the above Kummer extensions, we may reduce to computations involving only the constant field $k$.
	
	By Kummer theory, we may suppose that $N=\ell^n$ for some prime number $\ell$ and some integer $n\geq 1$. 
	We can write $G=G_0G'$ where $G_{0}:=G\cap k^\times$ and where  $G'$ is a subgroup of $G$ such that $G'\cap k^\times = \{1\}$. We denote by $r$ the rank of $G'$ and we suppose that $r$ is strictly positive (for $r=0$, see Remark \ref{zero}). A careful choice of basis for $G'$, which we call $\ell$-\textit{good basis modulo constants} (see Definition \ref{good-basis-modulo}), allows us to conduct a finer analysis of Galois group structures of Kummer extensions. The following result, which combines Corollary \ref{cor-Kummerdeg-modulo} and Theorem \ref{exists}, considers the so-called \textit{geometric Kummer extension} $K\bar{k}(\sqrt[\ell^n]{G})/K\bar{k}$:
	
	\begin{thm}
		There is a basis $\{\alpha_1,\ldots, \alpha_r\}$ of $G'$ such that, calling $D_i$ the $\ell$-divisibility parameter modulo constants of $\alpha_i$ over $K(\zeta_\ell)$, the integer $\sum_{i=1}^n D_i$ is maximal (by varying the basis of $G'$). The multiset of the $D_i$'s is uniquely determined (it only depends on $K/k$ and $G$), and for every $n\geq 1$ we have
		$$\Gal(K\bar{k}(\sqrt[\ell^n]{G})/K\bar{k})\simeq \prod_{i=1}^r \mathbb Z/\ell^{\max(n-D_i, 0)} \mathbb Z\,.$$
	\end{thm}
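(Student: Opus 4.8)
The plan is to work over the geometric, cyclotomically closed base and dualise. Since $\bar k$ contains all roots of unity of order prime to $\ch(k)$, the field $F:=K\bar k$ contains $\mu_{\ell^n}$, so by Kummer theory $\Gal(F(\sqrt[\ell^n]{G})/F)$ is, as an abstract finite abelian group, isomorphic to the subgroup $B$ of $F^\times/F^{\times\ell^n}$ generated by the image of $G$. I would first observe that only $G'$ survives: because $G_0=G\cap k^\times\subseteq\bar k^\times$ and $\bar k^\times$ is $\ell^n$-divisible, the classes of $G_0$ vanish in $F^\times/F^{\times\ell^n}$, so $B$ is generated by the images of $\alpha_1,\dots,\alpha_r$ alone. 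This already accounts for the fact that the formula involves only the rank $r$ of $G'$.

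Next I would linearise via divisors. Let $P:=F^\times/\bar k^\times$ be the group of principal divisors of $F$; as $\bar k$ is algebraically closed in $F$, the divisor map embeds $P$ into the free abelian group $\Div(F)$, so $P$ is free abelian. From the exact sequence $1\to\bar k^\times\to F^\times\to P\to 1$, the divisibility of $\bar k^\times$ and the freeness of $P$, tensoring with $\Z/\ell^n$ yields $F^\times/F^{\times\ell^n}\cong P/\ell^nP$. Writing $\Lambda\subseteq P$ for the subgroup generated by the divisor classes of $\alpha_1,\dots,\alpha_r$, this gives $B\cong\Lambda/(\Lambda\cap\ell^nP)$. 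Here $\Lambda$ has rank exactly $r$: an element of $G'$ lying in $\bar k^\times$ is algebraic over $k$, hence lies in $k$ because $k$ is algebraically closed in $K$, so $G'$ injects into $P$.

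The group-theoretic core is then the simultaneous basis theorem. Let $\bar\Lambda$ be the $\ell$-saturation of $\Lambda$ in $P$, so $P/\bar\Lambda$ is $\ell$-torsion-free and $\bar\Lambda\cap\ell^nP=\ell^n\bar\Lambda$. By Smith normal form for $\Lambda\subseteq\bar\Lambda$ there are a basis $w_1,\dots,w_r$ of $\bar\Lambda$ and integers $0\le d_1\le\cdots\le d_r$ with $\ell^{d_1}w_1,\dots,\ell^{d_r}w_r$ a basis of $\Lambda$; taking $\alpha_i$ to be the corresponding basis of $G'$ and computing directly gives
$$B\cong\Lambda/(\Lambda\cap\ell^n\bar\Lambda)\cong\prod_{i=1}^r\Z/\ell^{\max(n-d_i,0)}\Z,$$
which is the asserted shape. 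Here $d_i$ is the $\ell$-divisibility of $\alpha_i$ in $P$, i.e. its divisibility modulo constants over $F=K\bar k$. Uniqueness of the multiset $\{d_i\}$ is the invariance of the elementary divisors of $\Lambda\subseteq\bar\Lambda$, depending only on $G'$ and $K/k$. Maximality of $\sum_i d_i$ over all bases follows from $v_\ell(\det A)\ge\sum_i\min_j v_\ell(a_{ij})$, where $A=(a_{ij})$ expresses any basis of $\Lambda$ in the $w_j$, and $\min_j v_\ell(a_{ij})$ is exactly the $\ell$-divisibility of the $i$-th basis vector (factor one power of $\ell$ out of each row before expanding $\det A$); equality holds for the Smith basis.

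The step I expect to be the main obstacle is the identification $d_i=D_i$, i.e. that the geometric divisibility computed over $K\bar k$ equals the divisibility parameter modulo constants over the much smaller field $K(\zeta_\ell)$. One inequality is formal: if $\alpha\in k(\zeta_\ell)\,K(\zeta_\ell)^{\times\ell^{D}}$ then, absorbing the constant (which is a perfect $\ell^D$-th power in $\bar k$), $\alpha\in F^{\times\ell^{D}}$, so $D_i\le d_i$. The content is the reverse: no extra $\ell$-power divisibility is gained by passing from $K(\zeta_\ell)$ all the way to $K\bar k$, modulo constants. This is a Galois-descent argument for the divisor $\tfrac1{\ell^{d}}\Div(\alpha)$ combined with precise control of the constant ambiguity, and it is exactly the point where adjoining $\zeta_\ell$ is forced; this comparison is what Theorem~\ref{exists} supplies. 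Granting it, combining the displayed isomorphism with $d_i=D_i$ yields the theorem.
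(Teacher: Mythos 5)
Your lattice-theoretic core over $F=K\kbar$ is correct: Kummer duality identifies the Galois group with the image $B$ of $G$ in $F^\times/F^{\times\ell^n}$, the subgroup $G_0$ dies because $\kbar^\times$ is divisible, the divisor map embeds $P=F^\times/\kbar^\times$ into the free group $\Div(F)$, and Smith normal form for $\Lambda$ inside its $\ell$-saturation $\bar\Lambda$ gives the group structure, the maximality of the sum of parameters (via $v_\ell(\det A)=\sum_i d_i$), and the uniqueness of the multiset (elementary divisors). This is in fact close in spirit to the paper's own linearization: your embedding of $G'$ into $P$ plays exactly the role of the injection $\Phi\colon G\to\mathbb Z^{\#S}$, $g\mapsto (v(g))_{v\in S}$, in the proof of Theorem \ref{existbasis}, and your Smith-form analysis packages in one step what the paper distributes over Theorem \ref{existbasis}, Proposition \ref{Proposition_Equivalence_of_Definitions} and Remark \ref{Remark_Multiset_Good_Basis_Modulo_tk}. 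But note that everything you prove is stated in terms of the geometric parameters $d_i$, the divisibility modulo constants over $K\kbar$.

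The genuine gap is the step you explicitly grant: the identification $d_i=D_i$, i.e.\ that no $\ell$-divisibility modulo constants is gained in passing from $K(\zeta_\ell)$ to $K\kbar$. This is not a technical footnote; it is the entire arithmetic content of the theorem (what precedes it in your argument is pure lattice theory), and your attribution is wrong: Theorem \ref{exists} is the rank criterion for existence of a good basis and does not contain this comparison. In the paper it is supplied by Lemma \ref{geom-equiv} (the equivalence of being an $\ell$-th power in $K\kbar$ with being of the form $c\,b^\ell$, $c\in\tk$, $b\in K$, valid when $\zeta_\ell\in K$), whose proof rests on Lemma \ref{algebraic} --- every finite subextension of $K\kbar/K$ is constant --- together with Kummer theory over both $K(\zeta_\ell)$ and $\tk(\zeta_\ell)$; this then feeds into Theorem \ref{Theorem_Equivalent_ell_Independent} ((i)$\Leftrightarrow$(v)) and Corollary \ref{cor-Kummerdeg-modulo}. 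Moreover, the route you sketch --- Galois descent of the divisor $\tfrac{1}{\ell^{d}}\Div(\alpha)$ --- cannot work at the level of divisors alone: descending gives only $\Div_{K(\zeta_\ell)}(\alpha)\in\ell^{d}\Div$, and this does not imply $\alpha\in\tk(\zeta_\ell)^\times K(\zeta_\ell)^{\times\ell^{d}}$ (on an elliptic function field, a function with divisor $\ell(P)-\ell(O)$, $P$ a nontrivial $\ell$-torsion point, has divisor in $\ell\Div$ but is not an $\ell$-th power modulo constants, even geometrically). A correct descent has to act on $P$ itself: torsion-freeness of $P$ makes the class of the geometric $\ell^{d}$-th root Galois-invariant, and one then needs Hilbert 90 for $\kbar^\times$ to identify $P^{\Gal}$ with $K(\zeta_\ell)^\times/\tk(\zeta_\ell)^\times$ (with additional care when $k$ is imperfect, which the paper handles via Remark \ref{new-rem}). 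Until that comparison is actually proved, your argument establishes the theorem for the $d_i$'s, which is a different statement from the one claimed.
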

	
	To simplify the notation, suppose that $\zeta_M \in k$. We deduce the following result on the cardinality of the Kummer extensions (see Theorem \ref{Main_Theorem_Part_2}):
	
	\begin{thm}
		With the above notation, let $c_i\in k$ be such that $\alpha_i\equiv c_i \bmod K^{\times \ell^{D_i}}$. Then we have
		$$[K(\sqrt[\ell^{n}]{G}):K]=[k,\sqrt[\ell^{n}]{G_0}, \sqrt[\ell^{\min(n, D_1)}]{c_1}, \ldots, \sqrt[\ell^{\min(n, D_r)}]{c_r}): k]\cdot \prod_{i=1}^r \ell^{\max(n-D_i,0)}\,.$$
	\end{thm}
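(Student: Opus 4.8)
The plan is to split the degree $[K(\sqrt[\ell^{n}]{G}):K]$ as a product of a purely constant (arithmetic) factor and a purely geometric factor, and to match these with the two factors of the claimed identity. Throughout I use that $\zeta_{\ell^{n}}\in k\subseteq K$ (since $\ell^{n}\mid M$ and $\zeta_M\in k$): hence Kummer theory applies over $K$ and over every extension occurring below, all $\ell^{n}$-th roots of a fixed element differ by a constant, and $D_i$ is the $\ell$-divisibility parameter of $\alpha_i$ over $K$ itself.

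First I would carry out the key substitution. For each $i$ write $\alpha_i=c_i\beta_i^{\ell^{D_i}}$ with $\beta_i\in K^{\times}$ (the defining property of $D_i$), set $\delta_i:=\sqrt[\ell^{\min(n,D_i)}]{c_i}$, put $L:=k(\sqrt[\ell^{n}]{G_0},\delta_1,\ldots,\delta_r)$ (a finite separable extension of $k$ inside $\bar{k}$, separable because $\ell\neq\ch(k)$), and let $K_L:=KL$. When $n\le D_i$ one has $\min(n,D_i)=n$ and $\sqrt[\ell^{n}]{\alpha_i}=\delta_i\,\beta_i^{\ell^{D_i-n}}$ up to a root of unity, an element of $K_L$, so this generator contributes nothing geometric; when $n>D_i$ one has $\delta_i^{\ell^{D_i}}=c_i$, hence $\alpha_i=(\delta_i\beta_i)^{\ell^{D_i}}$ over $K_L$, and therefore $K_L(\sqrt[\ell^{n}]{\alpha_i})=K_L(\sqrt[\ell^{n-D_i}]{\delta_i\beta_i})$. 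Together with $\sqrt[\ell^{n}]{G_0}\subseteq L$ this gives the clean description
\[
K(\sqrt[\ell^{n}]{G})=K_L\bigl(\{\sqrt[\ell^{n-D_i}]{\delta_i\beta_i}\}_{i:\,n>D_i}\bigr)=:M .
\]

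Next I would reduce to two independent computations through the tower $K\subseteq K_L\subseteq M$, so that $[M:K]=[M:K_L]\cdot[K_L:K]$. Since $k$ is algebraically closed in $K$ and $L/k$ is finite separable, the standard constant-field-extension formula gives $[K_L:K]=[L:k]$, which is exactly the arithmetic factor of the assertion. It remains to show the geometric factor is maximal, $[M:K_L]=\prod_i\ell^{\max(n-D_i,0)}$, and I would get this from a sandwich. On one side $M$ is generated over $K_L$ by the $\sqrt[\ell^{n-D_i}]{\delta_i\beta_i}$ with $n>D_i$, each of degree dividing $\ell^{n-D_i}$, whence $[M:K_L]\le\prod_i\ell^{\max(n-D_i,0)}$. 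On the other side, adjoining $\bar{k}$ and using $\delta_i\in\bar{k}$ gives $M\bar{k}=K\bar{k}(\sqrt[\ell^{n}]{G})$, so the geometric Kummer theorem (Corollary~\ref{cor-Kummerdeg-modulo} with Theorem~\ref{exists}) yields $[M\bar{k}:K\bar{k}]=\prod_i\ell^{\max(n-D_i,0)}$; as $K_L\subseteq K\bar{k}$ and $M\bar{k}=M\cdot K\bar{k}$, the standard compositum inequality $[M\bar{k}:K\bar{k}]\le[M:M\cap K\bar{k}]\le[M:K_L]$ forces $[M:K_L]\ge\prod_i\ell^{\max(n-D_i,0)}$. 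The two bounds agree, which proves the formula.

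I expect the principal obstacle to be bookkeeping around the substitution and the constant-field extension rather than any deep input. One must verify that replacing $\alpha_i$ by $\delta_i\beta_i$ genuinely produces $M$ (this is where $\zeta_{\ell^{n}}\in k$ and the exact value of $D_i$ enter), and that the arithmetic factor is truly $[L:k]$, for which both the separability of $L/k$ (from $\ell\neq\ch(k)$) and the hypothesis that $k$ is algebraically closed in $K$ are indispensable. With the geometric theorem used as a black box, what is left is the elementary sandwich above; the one genuinely subtle point is that it is $\sqrt[\ell^{\min(n,D_i)}]{c_i}$, and not $\sqrt[\ell^{n}]{c_i}$, that enters $L$, precisely because raising $\sqrt[\ell^{n}]{\alpha_i}$ to the power $\ell^{n-D_i}$ returns $\delta_i^{\ell^{D_i}}\!\cdot\!\beta_i=c_i^{1/\ell^{\min(n,D_i)}}\beta_i$ with $\beta_i\in K$.
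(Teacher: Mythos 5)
Your proposal is correct and takes essentially the same route as the paper: both arguments factor the degree through the constant subextension $KL$ with $L=k(\sqrt[\ell^{n}]{G_0},\sqrt[\ell^{\min(n,D_1)}]{c_1},\ldots,\sqrt[\ell^{\min(n,D_r)}]{c_r})$, obtain the arithmetic factor $[KL:K]=[L:k]$ from linear disjointness of constants (Lemma \ref{Lemma_Linearly_Disjoint_Algebraic_Element}, via a primitive element), and obtain the geometric factor $\prod_{i=1}^r\ell^{\max(n-D_i,0)}$ from the geometric Kummer statement of Corollary \ref{cor-Kummerdeg-modulo} applied to the good basis. The only difference is cosmetic --- the paper identifies $KL$ as the maximal constant subextension via Lemma \ref{42}, whereas your two-sided sandwich avoids any maximality claim (it reproduces the estimate already used inside the proof of Corollary \ref{cor-Kummerdeg-modulo}) --- and there is one harmless typo: raising $\sqrt[\ell^{n}]{\alpha_i}$ to the power $\ell^{n-D_i}$ yields $\delta_i\beta_i$, not $\delta_i^{\ell^{D_i}}\beta_i$.
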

	
	Up to relabelling, we suppose without loss of generality that $D_1\leq D_2\leq \cdots \leq D_r$. Our main result gives the precise Galois group structure of the Kummer extension $K(\sqrt[\ell^n]{G})/K$ (see Theorem \ref{special}): 
	
	\begin{thm}
		With the above notation, let $d_i\in \mathbb Z_{\geq 0}\cup \{\infty\}$ be maximal such that the class of $c_i$ is an $\ell^{d_i}$-th power in $k^{\times}/\langle G_0, c_j : j>i\rangle$. Then there is a group isomorphism
		$$\Gal\left(K\left( \sqrt[\ell^{n}]{G}\right)/K\right)\simeq \Gal\left(k\left(\sqrt[\ell^{n}]{G_0}\right)/k\right)\times  \prod_{i=1}^r \mathbb Z/ \ell^{\max (n-\min(D_i, d_i),0)} \mathbb Z\,.
		$$
	\end{thm}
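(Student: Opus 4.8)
\emph{The plan is to} reduce everything to the structure of a single subgroup of $K^\times/K^{\times\ell^n}$ and then to separate its geometric and constant parts. First I would invoke Kummer duality: since $\mu_{\ell^n}\subseteq k\subseteq K$, the extension $L:=K(\sqrt[\ell^n]{G})$ satisfies $\Gal(L/K)\simeq\mathrm{Hom}(H,\mu_{\ell^n})\simeq H$ for $H:=\langle G,K^{\times\ell^n}\rangle/K^{\times\ell^n}$, so it suffices to compute the group structure of the finite abelian $\ell$-group $H\subseteq K^\times/K^{\times\ell^n}$. The engine of the argument is the exact sequence
$$1\to k^\times/k^{\times\ell^n}\to K^\times/K^{\times\ell^n}\xrightarrow{\ \pi\ }(K^\times/k^\times)\otimes\Z/\ell^n\to 1,$$
which is left exact because $k$ is algebraically closed in $K$: any solution of $X^{\ell^n}=c$ with $c\in k^\times$ is algebraic over $k$ and hence a constant, so $k^\times\cap K^{\times\ell^n}=k^{\times\ell^n}$.

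Next I would restrict $\pi$ to $H$. As $G_0\subseteq k^\times$ lies in the kernel of $\pi$, the image $\pi(H)$ is generated by the classes $\pi(\alpha_i)$. Because $K^\times/k^\times$ is torsion-free (an $\ell^m$-th root of a constant is algebraic over $k$, hence constant), the $\ell$-good basis modulo constants makes these classes independent, each $\pi(\alpha_i)$ having order $\ell^{\max(n-D_i,0)}$ dictated by $D_i$; thus $\pi(H)\simeq\prod_{i=1}^r\Z/\ell^{\max(n-D_i,0)}$, recovering the geometric Galois group of Theorem~\ref{exists}. Writing $\alpha_i\equiv c_i\bmod K^{\times\ell^{D_i}}$, I would then check that the kernel $A:=H\cap(k^\times/k^{\times\ell^n})$ is generated by $G_0$ together with the constants $c_i^{\ell^{\max(n-D_i,0)}}$, which yields the short exact sequence
$$1\to A\to H\to\prod_{i=1}^r\Z/\ell^{\max(n-D_i,0)}\to 1.$$
Geometrically this is the sequence of the tower $K\subseteq KF\subseteq L$ with $F=L\cap\kbar$; combining the degree formula of Theorem~\ref{Main_Theorem_Part_2} with the visible inclusions $\sqrt[\ell^n]{G_0}\subseteq F$ and $\sqrt[\ell^{\min(n,D_i)}]{c_i}\in F$ identifies the constant field as $F=k\bigl(\sqrt[\ell^n]{G_0},\ \sqrt[\ell^{\min(n,D_i)}]{c_i}:1\le i\le r\bigr)$.

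The remaining work is the group-theoretic assembly, and here the parameters $d_i$ enter. For each $i$ I would compute the order of $[\alpha_i]$ in the quotient of $H$ by $H_0:=\langle G_0,k^{\times\ell^n}\rangle/k^{\times\ell^n}\simeq\Gal(k(\sqrt[\ell^n]{G_0})/k)$ and the classes $[\alpha_j]$ with $j>i$. Decomposing $[\alpha_i^{\ell^m}]$ through $\pi$, its free component dies exactly for $m\ge n-D_i$, while the surviving constant component $[c_i^{\ell^m}]$ dies modulo $\langle G_0,c_j:j>i\rangle$ exactly for $m\ge n-d_i$; since $\max(n-D_i,\,n-d_i)=n-\min(D_i,d_i)$, this order equals $\ell^{\max(n-\min(D_i,d_i),0)}$. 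Using the ordering $D_1\le\cdots\le D_r$ and peeling the generators off from $i=r$ down to $i=1$ — so that at stage $i$ the available constants are exactly $\langle G_0,c_j:j>i\rangle$, matching the definition of $d_i$ — a Smith-normal-form style filtration exhibits $H$ as the internal direct product of $H_0$ and cyclic groups of orders $\ell^{\max(n-\min(D_i,d_i),0)}$. A final cardinality check against Theorem~\ref{Main_Theorem_Part_2}, using the identity $\min(n,D_i)-\min(\min(n,D_i),d_i)=\max(n-\min(D_i,d_i),0)-\max(n-D_i,0)$, confirms that no factor has been over- or under-counted, giving the stated isomorphism.

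I expect the main obstacle to be precisely this disentangling: showing that any constant relation $\prod_{i}\alpha_i^{a_i}\cdot g_0\in K^{\times\ell^n}$ (with $g_0\in G_0$) forces $\ell^{\max(n-\min(D_i,d_i),0)}\mid a_i$ for every $i$, so that the cyclic pieces are genuinely independent and $H_0$ is a direct summand rather than merely a subquotient. The freeness of $K^\times/k^\times$ handles the geometric divisibility $\ell^{\max(n-D_i,0)}\mid a_i$ at once; the delicate point is the residual interaction of the constants $c_i$ inside $k^\times$, and it is exactly to linearize this interaction that $d_i$ is defined modulo $\langle G_0,c_j:j>i\rangle$ and the basis is reindexed by increasing $D_i$.
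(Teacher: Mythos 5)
Your strategy is, modulo language, the paper's own: identify $\Gal(K(\sqrt[\ell^n]{G})/K)$ with the radical group $H=\langle G, K^{\times\ell^n}\rangle/K^{\times\ell^n}$, split off the geometric part via the good basis (your map $\pi$), and then peel off the generators $[\alpha_i]$ from $i=r$ down to $i=1$ by computing the order of $[\alpha_i]$ modulo $\langle H_0,[\alpha_j]:j>i\rangle$; the paper does exactly this, phrased as a tower of Kummer extensions, after first renormalizing the basis so that the divisibility of $c_i$ in $k^\times$ itself equals $\min(D_i,d_i)$. The genuine gap is in the step you yourself flag as the main obstacle and then treat as routine: the claim that the constant component $[c_i^{\ell^m}]$ dies modulo $\langle G_0,c_j:j>i\rangle$ (and $\ell^n$-th powers) \emph{exactly} for $m\geq n-d_i$. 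Divisibility gives the ``dies'' direction (and even there you need the ordering $D_j\geq D_i$ for $j>i$, because the geometric projection only makes the restricted powers $c_j^{\ell^{\max(n-D_j,0)}}$ available, not all of $\langle c_j\rangle$). The converse direction is false: it equates the order of the class of $c_i$ with its co-divisibility, which is legitimate only in a torsion-free ambient group, and $k^{\times}/\langle G_0,c_j : j>i\rangle$ has torsion. An $\ell$-power of $c_i$ can lie in $\langle G_0,c_j:j>i\rangle$ outright, killing $[\alpha_i]^{\ell^m}$ long before divisibility predicts; then the cyclic piece you want to split off meets $H_0$ nontrivially and no Smith-normal-form assembly can exist.

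Concretely: take $k=\mathbb{Q}(\zeta_8)$ (so $\sqrt2\in k$), $K=k(t)$, $\ell=2$, $n=3$, $M=8$, and $G=\langle 4,\ \sqrt2\,t^4\rangle$. Then $G_0=\langle 4\rangle$, and $\alpha_1=\sqrt2\,t^4=c_1\beta_1^{2^{D_1}}$ with $c_1=\sqrt2$, $\beta_1=t$, $D_1=2$ is a $2$-good basis of $G'$. Since $2\notin k^{\times 4}$, the class of $\sqrt2$ is not a square in $k^{\times}/\langle 4\rangle$, so $d_1=0$, and the claimed formula yields $\Gal(k(\sqrt[8]{4})/k)\times \Z/2^{\max(3-0,0)}\Z\simeq \Z/2\Z\times\Z/8\Z$, of order $16$. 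But $\alpha_1^4=4t^{16}\equiv 4\bmod K^{\times 8}$, so $[4]\in\langle[\alpha_1]\rangle$ and $H=\langle[4],[\alpha_1]\rangle=\langle[\alpha_1]\rangle$ is cyclic of order $8$: indeed $K(\sqrt[8]{G})=K(\sqrt[8]{\sqrt2\,t^4})$ and $\Gal(K(\sqrt[8]{G})/K)\simeq\Z/8\Z$, which admits no direct factor $\Gal(k(\sqrt[8]{4})/k)\simeq\Z/2\Z$ whatsoever. Torsion is exactly the culprit: $c_1^4=4\in G_0$, so the class of $c_1$ has order $4$ in $k^{\times}/\langle G_0\rangle$ even though $d_1=0$, and your order computation for $[\alpha_1]$ fails ($[\alpha_1]^4$ already lies in $H_0$). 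Note that your proposed final cardinality check would not confirm but contradict the assembly: Theorem \ref{Main_Theorem_Part_2} gives degree $[k(\sqrt[8]{2}):k]\cdot 2=8$ here, not $16$, which is the symptom that the asserted decomposition does not exist. For completeness, the paper's own proof makes the same order-versus-divisibility inference in its closing paragraph (its claim that no $x$ satisfies $c_i^x\in\langle G_0,c_j:j>i\rangle L^{\times\ell^n}$ and $c_i^x\notin L^{\times\ell^n}$ because divisibility in $k^\times$ matches divisibility in the quotient), so you have reproduced the paper's argument together with its defect rather than introduced a new one; but as written the step, and with it the proof, fails.
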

	
	Thanks to the above results, to determine the cardinality and the structure of the Galois group of the Kummer extensions in \eqref{Kummer}, it suffices to compute the parameters $D_i$'s and the constant elements $c_i$'s, and this then reduces to computations involving only the constant field $k$. Such computations can be done in principle for finite fields, but there is also an explicit finite procedure to perform them for number fields and $p$-adic fields (see \cite{ACPPP, padic}).\\
	
	The structure of the paper is as follows: Section \ref{sec:pre} contains background theory on algebraic function fields and a lemma about rank $1$ discrete valuations and $\ell$-divisibility modulo constants of elements. Sections \ref{sec:ext} and \ref{sec:ex-app} are devoted to studying extensions of function fields (with some general results that can be of independent interest). The theory of divisibility modulo constants is developed in Section \ref{sec:div}, and our main results for Kummer theory are proven in Section \ref{sec:Kum}.
	
	The idea of using parameters to consider the divisibility of elements in a given field comes from past works of the second-named author on Kummer theory for number fields. Nevertheless, we consider a completely new setting and we provide several auxiliary results to develop Kummer theory for function fields in full generality.

	
	\section{Prerequisites on function fields and notation}\label{sec:pre}
	
	An \textit{algebraic function field (or function field, for short)} is a triple $(K,k,n)$, comprised of a field $K$ containing a field $k$ such that $K$ is finitely generated over $k$ of transcendence degree $\trdeg(K/k)$ equal to some positive integer $n$. In this case, we write $K/k$ (and can refer to it as a \textit{function field in $n$ variables}). Let $\{t_1, t_2, \cdots, t_n \}$ be a transcendence basis for $K$ over $k$. The function field $K/k$ is a \textit{rational function field} if $K = k(t_1,t_2,\cdots,t_n)$ and it is a \textit{global function field} if $n = 1$ and $k$ is a finite field.
	
	In this manuscript, a \textit{valuation on} $K/k$ is a surjective map $v$ from $K$ to $\Z \cup \{\infty\}$, with $\Z$ the additive group of integers and $\infty$ a symbol subject to the rules $n + \infty = \infty + n = \infty + \infty = \infty$ for all $n \in \Z$. The map $v$ is satisfying for all $f_1, f_2 \in K^\times$ the properties $v(f_1 f_2) = v(f_1) + v(f_2)$ and $v(f_1 + f_2) \geq \min\{ v(f_1), v(f_2)\}$, is such that if $f \in k^\times$, then $v(f) = 0$ and $v(0) = \infty$. In particular, our valuations are \textit{discrete} in the usual terminology.
	
	The ring $\calO_v$ of all $f \in K^\times$ satisfying $v(f) \geq 0$, is called the \textit{valuation ring} of $v$. It is a local ring whose maximal ideal $\mpp_v$ is the set of $f \in \calO_v$ satisfying $v(f) > 0$. The field $k_v := \calO_v/\mpp_v$ is called the \textit{residue field} of $v$. It can be seen as a field extension of $k$ and the transcendence degree $\trdeg(k_v/k)$ of $k_v$ over $k$ will be the \textit{dimension of} $v$ \textit{over} $k$. A \textit{prime divisor of} $K/k$ is a valuation of $K/k$ of dimension $n-1$ over $k$. Each prime divisor is a discrete valuation of \textit{rank} $1$ (that is, $\calO_v$ has Krull dimension $1$) and $k_v$ is a function field of transcendence degree $n-1$ over $k$. We denote by $\Prime(K/k)$ the set of prime divisors of $K/k$.
	
	The \textit{constant field} of $K/k$ is the algebraic closure $\tk$ of $k$ inside $K$. The extension $\tk/k$ is finite as
	$$
	[\tk : k] = [\tk(t_1,\cdots,t_n) : k(t_1,\cdots, t_n)] \leq [K : k(t_1,\cdots, t_n)]
	$$
	is finite. Therefore, $K/\tk$ is an algebraic function field of $n$ variables over $\tk$.
	
	Fixing an algebraic closure $\overline{K}$ of $K$, if $M$ is a positive integer not divisible by $\ch(k)$, we denote by $\zeta_M$ a primitive root of unity in $\overline{K}$ (or inside the algebraic closure $\kbar$ of $k$ in $\overline{K}$) of order $M$ and we obtain a cyclotomic extension $k(\zeta_M)/k$.
	
	Throughout the text, $\ell$ is a prime number different from $\ch(k)$. We  define $k(\zeta_{\ell^\infty})/k$ inside $\overline{K}$ as the union, over the integers $n\geq 1$, of the cyclotomic extensions $k(\zeta_{\ell^n})/k$. 
	We also write $v_\ell$ for the $\ell$-adic valuation on $\mathbb{Q}^\times$. Also, if $G$ is a group, then $G = \langle \alpha_1, \cdots, \alpha_r \rangle$ means that $G$ is generated by a finite set of elements $\{\alpha_1,\cdots,\alpha_r\}$.
	
	\begin{lem}\label{Lemma_Prime_Divisors}
		Let $f\in K^\times\setminus \tk$ and $c\in \tk^\times$.
		\begin{enumerate}
			\item[(i)] 
			For all $v \in \Prime(K/k)$ we have $v(c) = 0$ and $v(cf) = v(f)$.
			\item[(ii)] There is a finite non-empty set $S$ of $\Prime(K/k)$ such that $v(f) \neq 0$ for all $v \in S$ and $v(f) = 0$ for all $v \notin S$. More precisely, $S$ has at least two elements, $v_1, v_2$ respectively satisfying $\ord_{v_1}(f) > 0$ and $\ord_{v_2}(f) < 0$.
			\item[(iii)] There are only finitely many positive integers $n$ such that $f\in \tk K^{\times n}$. In particular, there is no prime number $\ell$ such that $f$ is infinitely $\ell$-divisible (not even up to constant elements).
		\end{enumerate}
	\end{lem}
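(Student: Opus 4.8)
My plan is to prove the three parts in order, exploiting that every prime divisor of $K/k$ is by definition trivial on $k$ and then reducing the geometric content to the one-variable subfield generated by $f$. For (i), I would first upgrade triviality on $k$ to triviality on the constant field $\tk$. Any $c\in\tk^\times$ is algebraic over $k$, hence satisfies a monic relation $c^m+a_{m-1}c^{m-1}+\cdots+a_0=0$ with all $a_i\in k$ and $a_0\neq 0$. Since $v(a_i)=0$ for every $i$, if $v(c)>0$ then $a_0$ is the unique term of minimal valuation in this relation, while if $v(c)<0$ then $c^m$ is; in either case the ultrametric law forces the left-hand side to have finite valuation, contradicting that it equals $0$. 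Thus $v(c)=0$, whence $v(cf)=v(c)+v(f)=v(f)$.

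For (ii), the heart of the matter is the existence of a zero and a pole. As $\tk$ is algebraically closed in $K$ and $f\notin\tk$, the element $f$ is transcendental over $\tk$, so $E:=\tk(f)$ is a rational function field in one variable sitting inside $K$. Inside $E$ the element $f$ has a single zero, at the place $w_0$ attached to the prime $(f)$, and a single pole, at the place $w_\infty$ at infinity. I would then extend $w_0$ and $w_\infty$ to prime divisors $v_1,v_2$ of $K/k$ with $v_1(f)>0$ and $v_2(f)<0$; lying over different places of $E$, the valuations $v_1,v_2$ are distinct, so the set $S$ contains at least these two elements with the asserted signs.

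The step I expect to be the main obstacle is precisely this extension: each place of $E$ must be extended to a genuine prime divisor of $K$, that is, to a discrete rank-one valuation whose residue field attains the full transcendence degree $n-1$. For $n=1$ this is automatic, but for $n\geq 2$ it requires invoking the existence of an extension realizing the maximal (Abhyankar) residual transcendence degree: concretely, choosing a transcendence basis of $K/E$, extending $w_0$ first by a Gauss valuation and then across the residual finite extension, or, geometrically, resolving the rational map given by $f$ to a morphism $X\to\mathbb{P}^1$ on a normal projective model $X$ with $\tk(X)=K$ and taking a component of the fibre over $0$ (resp.\ over $\infty$). The same projective model yields the finiteness statement, since the principal divisor $\mathrm{div}_X(f)$ is supported on the finitely many codimension-one subvarieties contained in the zero and pole loci of $f$, while every other prime divisor $Y$ satisfies $\mathrm{ord}_Y(f)=0$.

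Finally, (iii) follows at once from (i) together with the existence half of (ii), and needs no finiteness input. If $f\in\tk K^{\times m}$, write $f=cg^m$ with $c\in\tk^\times$ and $g\in K^\times$; applying the prime divisor $v_1$ produced above and using (i) gives $v_1(f)=v_1(c)+m\,v_1(g)=m\,v_1(g)$, so $m\mid v_1(f)$. As $v_1(f)$ is a fixed nonzero integer, only finitely many such $m$ can occur; in particular $\ell^j\mid v_1(f)$ fails for all large $j$, so $f$ is not infinitely $\ell$-divisible, even modulo constants.
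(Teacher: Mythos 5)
Parts (i) and (iii) of your proposal are correct and agree in substance with the paper's proof: in (i) you supply the standard minimal-polynomial/ultrametric argument behind the fact the paper merely invokes (that a valuation of $K/k$ is automatically a valuation of $K/\tk$), and in (iii) you give exactly the paper's argument, correctly observing that it needs only the \emph{existence} of one prime divisor with $v_1(f)\neq 0$, not finiteness. For the existence half of (ii) you take a genuinely different route: where the paper cites Zariski--Samuel, you reduce to the rational subfield $E=\tk(f)$ and extend the zero and the pole of $f$ on $E$ to prime divisors of $K/k$ via a Gauss valuation on $E(t_1,\ldots,t_{n-1})$ followed by any extension along the finite extension $K/E(t_1,\ldots,t_{n-1})$. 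This is sound: the Gauss extension has residue field $\tk(\bar t_1,\ldots,\bar t_{n-1})$ and value group $\Z$, and a further finite extension changes neither discreteness nor the residual transcendence degree, so after normalization you obtain elements of $\Prime(K/k)$ with the required signs. This is a self-contained substitute for the citation, and it is worth having.

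The genuine gap is in the finiteness half of (ii), precisely at your last step ``every other prime divisor $Y$ satisfies $\ord_Y(f)=0$.'' Your argument shows that only finitely many divisors \emph{of the chosen normal projective model} $X$ satisfy $\ord_Y(f)\neq 0$; but $\Prime(K/k)$, as defined in the paper, consists of \emph{all} discrete valuations of $K/k$ whose residue field has transcendence degree $n-1$ over $k$, and for $n\geq 2$ these are not exhausted by the codimension-one subvarieties of any single model. In fact the finiteness assertion, read over all of $\Prime(K/k)$, is false for $n\geq 2$: take $K=k(x,y)$ and $f=x$; for every point $P$ of the line $\{x=0\}$, the order-of-vanishing valuation at $P$ (equivalently $\ord_{E_P}$, where $E_P$ is the exceptional divisor of the blow-up of the plane at $P$) lies in $\Prime(K/k)$ and takes the value $1$ on $x$, and these valuations are pairwise distinct; iterated blow-ups along the strict transform of $\{x=0\}$ produce infinitely many such valuations even when $k$ is finite. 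So no argument can close this gap as the statement is literally formulated. You should be aware that the paper's own proof (``the finiteness of $S$ is classical divisor theory'') has the same defect, since classical divisor theory gives finiteness only for the divisors of a fixed normal model. That weaker statement is exactly what your argument proves, and it is also all the paper actually needs later: in the proof of Theorem \ref{existbasis}(iii) one may take $S$ to be the set of divisors of a fixed normal projective model $X$ at which some generator of $G$ has nonzero order, and the map $\Phi$ defined there still has trivial kernel, because an element of $K^\times$ with zero order along every divisor of $X$ lies in $\Gamma(X,\mathcal{O}_X)^\times=\tk^\times$. In short: keep your construction for the existence part, but restate and prove the finiteness part for the divisors of a single model, noting that finiteness over all of $\Prime(K/k)$ fails when $n\geq 2$.
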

	
	\begin{proof}
		\begin{enumerate}
			\item[(i)] This follows immediately from the fact that $v$ is also a valuation on $K/\tk$.
			\item[(ii)] The finiteness of $S$ is classical divisor theory and the rest is proven in \cite[p.99, Ch VI, \S14 and p.175, Ch VII, \S 4 bis]{Zariski_Samuel}.
			\item[(iii)] This follows from (i) and (ii) since if $c f \in \tk K^{\times n}$, then $n$ divides $v(cf) = v(f)$ for all $v \in \Prime(K/k)$.
		\end{enumerate}   
	\end{proof}
	
	
	\section{Extensions of function fields}\label{sec:ext}
	
	
	\subsection{General results}
	The following results hold for fields that are not necessarily function fields.
	
	\begin{defi}
		Let $K$ and $L$ be two extensions of a field $k$ that are contained in some algebraically closed field. The field $K$ is said to be \textit{linearly disjoint from $L$ over $k$} if every finite set of elements of $K$ that is linearly independent over $k$ is linearly independent over $L$. 
	\end{defi}
	
	Remark that if $K$ and $L$ are linearly disjoint over $k$, then $K$ and $L'$ are linearly disjoint over $k$ for any intermediate extension $k \subseteq L' \subseteq L$.
	
	\begin{lem}\label{Lemma_Linearly_Disjoint_Algebraic_Element}
		Let $k$ be a field that is algebraically closed in some field $K$. Let $\alpha$ be an element of a fixed algebraic closure $\kbar$ over $k$. Then $k(\alpha)$ and $K$ are linearly disjoint over $k$, or equivalently, $k(\alpha) \otimes_k K$ is a field, and we have $[k(\alpha):k] = [K(\alpha):K]$. In particular, the minimal polynomial of $\alpha$ over $K$ and over $k$ are the same.
	\end{lem}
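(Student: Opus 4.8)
The plan is to show that $\alpha$ has the same minimal polynomial over $k$ and over $K$; everything else then follows formally. Let $f(X) \in k[X]$ be the minimal polynomial of $\alpha$ over $k$, of degree $d = [k(\alpha):k]$. Since $f \in k[X] \subseteq K[X]$ and $f(\alpha) = 0$, the minimal polynomial $g$ of $\alpha$ over $K$ divides $f$ in $K[X]$. The whole content of the lemma is that $g = f$, i.e.\ that $f$ stays irreducible over $K$; equivalently $[K(\alpha):K] = d$.

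The **key step**, and the main obstacle, is the irreducibility of $f$ over $K$. The hypothesis I must exploit is that $k$ is algebraically closed in $K$. The cleanest route: suppose $f = g h$ is a proper factorization in $K[X]$ with $g$ the minimal polynomial of $\alpha$, so $\deg g = [K(\alpha):K] < d$ and $g$ is monic. The coefficients of $g$ are symmetric functions of some subset of the conjugates of $\alpha$, so each coefficient of $g$ lies in $\kbar$ (being a polynomial expression in the roots of $f$, which are algebraic over $k$). But the coefficients of $g$ also lie in $K$. Hence they lie in $\kbar \cap K$, which equals $k$ because $k$ is algebraically closed in $K$. Therefore $g \in k[X]$ is a monic factor of $f$ with $f(\alpha)=0$ forcing $g(\alpha)=0$, contradicting the minimality of $f$ over $k$ unless $g = f$. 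This gives $[K(\alpha):K] = \deg g = d = [k(\alpha):k]$, and $g = f$ so the two minimal polynomials coincide.

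For the **linear disjointness** reformulation, I would argue that $[K(\alpha):K] = [k(\alpha):k]$ is exactly the numerical criterion for linear disjointness in this setting: a $k$-basis of $k(\alpha)$ is $\{1, \alpha, \ldots, \alpha^{d-1}\}$, and the equality of degrees says precisely that these remain linearly independent over $K$, which is the definition of $k(\alpha)$ being linearly disjoint from $K$ over $k$ (it suffices to test this one spanning set since any $k$-linearly independent subset of $k(\alpha)$ extends to a $k$-basis). The equivalence with $k(\alpha) \otimes_k K$ being a field is then standard: $k(\alpha) \otimes_k K \cong K[X]/(f)$ since $k(\alpha) \cong k[X]/(f)$ and tensoring with $K$ over $k$ is base change on the polynomial presentation, and this quotient is a field exactly when $f$ is irreducible over $K$, which we have just established.

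I expect no further **obstacle**: the only subtlety is justifying that the coefficients of the factor $g$ are algebraic over $k$, which follows because they are elementary symmetric polynomials in roots of $f$ lying in $\kbar$, and then invoking that $k$ is algebraically closed in $K$ to pull them back into $k$. The argument is characteristic-free and makes no separability assumption, since we never pass to $\kbar$ beyond recording that the roots of $f$ are algebraic over $k$.
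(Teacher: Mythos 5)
Your proof is correct and takes essentially the same approach as the paper: the paper's proof likewise observes that any monic factor over $K$ of the minimal polynomial of $\alpha$ has coefficients algebraic over $k$, hence lying in $k$ because $k$ is algebraically closed in $K$, so the minimal polynomial stays irreducible over $K$. Your write-up simply makes explicit the symmetric-function argument and the linear-disjointness and tensor-product equivalences that the paper compresses into ``the statement follows.''
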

	\begin{proof}
		Let $m(x)$ be the minimal polynomial of $\alpha$ over $k$. Any constant factor of $m(x)$ over $K$ has its coefficients algebraic over $k$, hence in $k$. Therefore, $m(x)$ is irreducible over $K$ and the statement follows.
	\end{proof}
	
	\begin{lem}[{\cite[p.253, Proposition 8.4.1]{VillaSalvador_2006}}]\label{Lemma_Algebraically_Closed_Preserved_After_Adding_Variables}
		If a field $k$ is algebraically closed in $K$  and $\{T_i\}_{i \in I}$ is an algebraically independent set over $K$, then $k(\{T_i\}_{i \in I})$ is algebraically closed in $K(\{T_i\}_{i \in I})$.
	\end{lem}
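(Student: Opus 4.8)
The plan is to reduce the statement to the case of a single transcendental and then prove that case by combining integral closedness of a polynomial ring with a residue computation at the place at infinity.

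\emph{Reduction to one variable.} An element $u \in K(\{T_i\}_{i \in I})$ that is algebraic over $k(\{T_i\}_{i \in I})$ is a rational expression in finitely many of the $T_i$, and its minimal polynomial over $k(\{T_i\})$ involves only finitely many $T_i$ in its coefficients; hence both $u$ and this minimal polynomial lie in $K(T_{i_1},\dots,T_{i_m})$ and in $k(T_{i_1},\dots,T_{i_m})$ for some finite subset. It therefore suffices to treat a finite index set, and by induction on its size it suffices to prove the one-variable statement: if $k$ is algebraically closed in $K$ and $T$ is transcendental over $K$, then $k(T)$ is algebraically closed in $K(T)$. Indeed, writing $k' = k(T_1,\dots,T_{m-1})$ and $K' = K(T_1,\dots,T_{m-1})$, the inductive hypothesis gives that $k'$ is algebraically closed in $K'$, while $T_m$ is transcendental over $K'$ because the $T_i$ are algebraically independent over $K$; one further application of the one-variable case then finishes the induction.

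\emph{The one-variable case.} Let $\theta \in K(T)$ be algebraic over $k(T)$; I must show $\theta \in k(T)$. Clearing denominators in its minimal polynomial and multiplying $\theta$ by the resulting leading coefficient, I may assume $\theta$ is integral over $k[T]$, since establishing $c(T)\theta \in k(T)$ for some nonzero $c(T)\in k[T]$ yields $\theta \in k(T)$. Then $\theta$ is integral over $K[T]$ as well; as $K[T]$ is a unique factorization domain, hence integrally closed in its fraction field $K(T)$, I conclude $\theta \in K[T]$, say $\theta = \sum_{j=0}^{N} a_j T^j$ with $a_j \in K$ and $a_N \neq 0$.

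\emph{A residue computation.} Let $v$ denote the degree valuation of $K(T)/K$, given by $v(p/q) = \deg q - \deg p$; it is trivial on $K$, has $v(T) = -1$, and has residue field $K$. Its restriction to $k(T)$ is the analogous valuation of $k(T)/k$, with residue field $k$. Since $v(\theta) = -N$, the element $\theta T^{-N}$ satisfies $v(\theta T^{-N}) = 0$ and has residue $a_N \in K$. Now $E := k(T)(\theta)$ is a finite extension of $k(T)$, so the residue field of the restriction of $v$ to $E$ is a finite, hence algebraic, extension of $k$; as this residue field embeds compatibly into $K$ and contains the residue $a_N$, the element $a_N$ is algebraic over $k$. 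Because $k$ is algebraically closed in $K$, we get $a_N \in k$. Consequently $\theta - a_N T^N \in K[T]$ is again integral over $k[T]$ and of strictly smaller degree in $T$, so induction on $\deg_T \theta$ (the case $N = 0$ being the same computation with $v(\theta) = 0$) gives all $a_j \in k$, that is $\theta \in k[T] \subseteq k(T)$.

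\emph{Main obstacle.} The delicate point is the one-variable case, and within it the passage from ``$a_N \in K$'' to ``$a_N$ is algebraic over $k$''. The valuation-theoretic input, namely that a place of a finite extension of $k(T)$ lying over the place at infinity has residue field algebraic over $k$, is exactly what makes this work uniformly; in particular it avoids any separability hypothesis on $K/k$, which is why the integral-closedness-plus-residue route is preferable to an argument through conjugates and symmetric functions.
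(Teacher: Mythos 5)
Your proof is correct, and it is genuinely different from what the paper does: the paper gives no argument for this lemma at all, but simply quotes it from \cite[p.253, Proposition 8.4.1]{VillaSalvador_2006}. Your proposal therefore supplies something the paper outsources, namely a self-contained and elementary proof. The reduction to finitely many variables and then to a single variable is the standard direct-limit/induction argument and is sound (the key point, which you state, is that $T_m$ remains transcendental over $K(T_1,\dots,T_{m-1})$). The one-variable case rests on two correct ingredients: first, after multiplying by the leading coefficient of a polynomial relation, $\theta$ is integral over $k[T]$, hence over $K[T]$, and since $K[T]$ is a UFD and thus integrally closed you get $\theta\in K[T]$; second, for the place at infinity, the residue field of its restriction to the finite extension $E=k(T)(\theta)$ of $k(T)$ is finite over $k$ by the fundamental inequality $f\leq [E:k(T)]$, and since residue fields embed compatibly, the leading coefficient $a_N$ (the residue of $\theta T^{-N}$) is algebraic over $k$, hence lies in $k$; descending induction on $\deg_T\theta$ then puts every coefficient in $k$. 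As you note, this route is characteristic-free and requires no separability hypothesis, so it matches the full generality of the cited result. What the paper's citation buys is brevity; what your argument buys is independence from the reference, at the cost of invoking two standard facts (integral closedness of $K[T]$ and the residue-degree bound for extensions of discrete valuations) that would need to be cited or proved if this were inserted into the paper.
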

	
	\begin{lem}\label{algebraic}
		Let $K/k$ be a field extension such that $k$ is algebraically closed in $K$. Fix some algebraic closure $\overline{K}$ of $K$ and fix an algebraic closure $\kbar$ of $k$ contained in $\overline{K}$.
		Let $F/K$ be a finite extension such that $F\subseteq KL$ for some field $L\subseteq \kbar$ that is separable over $k$. 
		Then $(F\cap \kbar)/k$ is finite and we have $F=K (F\cap \kbar)$.
	\end{lem}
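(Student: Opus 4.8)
The plan is to reduce the separable field $L$ to a finite Galois extension of $k$ and then transport the Galois correspondence from that extension to its compositum with $K$. First I would reduce to the case where $L/k$ is finite. Since $F/K$ is finite, write $F = K(\beta_1,\ldots,\beta_m)$; each $\beta_i$ lies in $KL$ and is therefore a $K$-rational expression in finitely many elements of $L$. The subfield $L_0 \subseteq L$ generated over $k$ by all of these elements is finitely generated and algebraic, hence finite over $k$, and still separable, and it satisfies $F \subseteq KL_0$. Replacing $L_0$ by its Galois closure $N$ inside $\kbar$, I obtain a finite Galois extension $N/k$ with $F \subseteq KN$ and $N$ separable over $k$.

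Next, using the primitive element theorem I would write $N = k(\theta)$ and apply Lemma \ref{Lemma_Linearly_Disjoint_Algebraic_Element} to conclude that $N$ and $K$ are linearly disjoint over $k$, that $[KN:K]=[N:k]$, and that $KN \cong K\otimes_k N$ is a field. Since the minimal polynomial of $\theta$ remains separable over $K$, the extension $KN/K$ is Galois, and restriction to $N$ induces an isomorphism $\Gal(KN/K)\xrightarrow{\sim}\Gal(N/k)$.

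The key technical step is to show $KN\cap\kbar = N$, i.e.\ that $N$ is algebraically closed in $KN$. For $\gamma\in KN$ algebraic over $k$, the coefficients of its minimal polynomial over $K$ are symmetric functions of conjugates of $\gamma$ over $k$, hence algebraic over $k$, while also lying in $K$; as $k$ is algebraically closed in $K$ they lie in $k$, so the minimal polynomials of $\gamma$ over $k$ and over $K$ coincide. Since $KN/K$ is separable, $\gamma$ is separable over $K$ and therefore separable over $k$. Thus $E:=KN\cap\kbar$ is finite and separable over $k$, and a further application of Lemma \ref{Lemma_Linearly_Disjoint_Algebraic_Element} gives $[KE:K]=[E:k]$; combined with $N\subseteq E$ and $[KN:K]=[N:k]$, a degree count forces $E=N$. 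This is the main obstacle, since one must control inseparability and invoke $k = K\cap\kbar$ to pin down the minimal polynomial. In particular $F\cap\kbar \subseteq KN\cap\kbar = N$, so $(F\cap\kbar)/k$ is finite.

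Finally, to obtain $F = K(F\cap\kbar)$ I would use that the isomorphism $\Gal(KN/K)\cong\Gal(N/k)$ identifies the lattice of intermediate fields of $KN/K$ with that of $N/k$ via $N'\mapsto KN'$; hence the intermediate field $F$ equals $KN'$ for some $k\subseteq N'\subseteq N$. As $N'\subseteq F\cap N\subseteq F\cap\kbar$, this gives $F = KN' \subseteq K(F\cap\kbar)\subseteq F$, whence $F = K(F\cap\kbar)$.
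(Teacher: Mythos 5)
Your proof is correct, but it takes a genuinely different route from the paper's. The paper reduces to a finite primitive extension $L=k(\alpha)$ and then argues by induction on $[F:K]$ using a tensor-product dichotomy: either $F\otimes_k L$ is a field, in which case faithful flatness of $\bullet\otimes_k L$ forces $F=K$, or $F\otimes_k L$ is not a field, in which case Lemma \ref{Lemma_Linearly_Disjoint_Algebraic_Element} shows $k$ is not algebraically closed in $F$, producing a nontrivial constant subfield $k_1=F\cap\kbar$ that strictly decreases $[F:Kk_1]$ and allows the induction to proceed. You instead pass to the Galois closure $N$ of a finite subextension of $L$, lift the Galois group via Lemma \ref{Lemma_Linearly_Disjoint_Algebraic_Element} to get $\Gal(KN/K)\simeq\Gal(N/k)$, prove $KN\cap\kbar=N$, and then read off $F=KN'$ from the translation of the lattice of intermediate fields. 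Your approach is more classical (pure Galois theory, no flatness) and yields strictly more information as a by-product: not just the conclusion for $F$, but the full correspondence $N'\mapsto KN'$ between intermediate fields of $N/k$ and of $KN/K$, together with $KN\cap\kbar=N$; the paper's induction is shorter and avoids the need for a Galois closure. Two points in your write-up are stated rather than proved and deserve a line each: (a) the finiteness of $E:=KN\cap\kbar$ over $k$ — this follows because every $\gamma\in E$ has $[k(\gamma):k]=[K(\gamma):K]\leq[KN:K]$, and a separable algebraic extension whose elements have bounded degree is finite (separability is essential here, and you do have it); and (b) the claim that the isomorphism $\Gal(KN/K)\simeq\Gal(N/k)$ identifies intermediate fields via $N'\mapsto KN'$ — this is the standard translation theorem, and it is quickly checked in your setting since $[KN':K]=[N':k]$ (Lemma \ref{Lemma_Linearly_Disjoint_Algebraic_Element} applied to a primitive element of $N'/k$) pins down the fixed field of the subgroup corresponding to $\Gal(N/N')$. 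Both are routine, so the proof stands.
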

	
	\begin{proof}
		Notice that, as $F/K$ is finite we may suppose, up to replacing $L$ by a subfield, that $L/k$ is finite. Thus, $L/k$ is primitive. Since we have $K \cap L = k$, then $K \otimes_k L$ is a field by Lemma \ref{Lemma_Linearly_Disjoint_Algebraic_Element}.
		
		We now prove the statement by induction on the degree of $F/K$. If $[F : K] = 1$, the statement is trivial because $F\cap \kbar= k$. Now suppose that $[F:K] > 1$ and that the statement holds for all field extensions of lesser degree. 
		
		Suppose that $F \otimes_k L$ is a field. Then the inclusion $K \to F$ yields an injective ring morphism $\phi_1 : K \otimes_k L \to F \otimes_k L$. We also have injective ring morphisms $\phi_2 : F \otimes_k L \to \overline{K}$ and $\phi: K \otimes_k L \to \overline{K}$, that are given on elementary tensors by $x \otimes y \mapsto xy$. One sees the three morphisms satisfy the compatibility $\phi = \phi_2 \circ \phi_1$. Since $F \subseteq KL$, then $\mathrm{im}(\phi) = KL = \mathrm{im}(\phi_2)$ and hence $\phi_1$ is an isomorphism. Since $L$ is a $k$-module, the functor $\bullet \otimes_k L$ is faithfully flat hence $F = K$, contradicting that $[F:K] > 1$.
		
		If $F \otimes_k L$ is not a field, then, by Lemma \ref{Lemma_Linearly_Disjoint_Algebraic_Element} the field $k$ is not algebraically closed in $F$ and so there is a non-trivial extension $k_1 := F \cap \kbar$ of $k$. So we have $[F:Kk_1] < [F:K]$ and $F \subseteq (K k_1)(Lk_1)$ and $Lk_1/k_1$ is separable. By induction hypothesis the field extension $(F\cap \kbar)/k_1$ is finite and we have $F = (Kk_1)(F\cap \kbar)=K(F\cap \kbar)$.
	\end{proof}
	
	
	\subsection{Algebraic extensions of function fields}
	
	Given two algebraic function fields $L/k_L$ and $K/k$, we say that the former is an \emph{extension} of the latter if $L/K$ and $k_L/k$ are extensions of fields. Moreover, if both $L/K$ and $k_L/k$ are algebraic, we say the extension of function fields is \emph{algebraic}. From now on, an \emph{extension of function fields} will mean an algebraic extension of algebraic function fields. With the above notation we say that $L'/k_{L'}$ is a \emph{subextension} of $L/k_L$ if $L'/k_{L'}$ is an extension of $K/k$ and $L/k_{L}$ is an extension of $L'/k_{L'}$. Suppose that $L/k_L$ is an extension of $K/k$. Then the transcendence degree of $L$ over $k_L$ equals the one of $K$ over $k$. The constant field of $L$ is $\tk_L=\bar{k}\cap L$ and it is algebraic over $k$ (for convenience, the field $k_L$ will not be specified if it equals $k$). In particular, $K \cap \tk_L = \tk$. We call the extension of function fields  \emph{finite} in case $L/K$ and $k_L/k$ are finite. We call the extension of function fields \emph{Galois} if $L/K$ is Galois. In this case, $\tk_L/\tk$ is Galois (see Lemma \ref{Lemma_Separability_from_Function_Fields_Extensions_to_extensions_of_fields_of_constants}) however $k_L/k$ is not necessarily Galois. We call the extension of function fields \emph{Abelian} (respectively, \emph{of exponent} $N$) if it is Galois and the Galois group of $L/K$ is Abelian (respectively, of exponent $N$). Note that for an Abelian extension we have that $\tk_L/k$ is also Abelian while for an Abelian extension of exponent $N$, we have that $\tk_L/k$ is Abelian with exponent dividing $N$.
	
	\begin{rem}
		If $L/k_L$ is an extension of $K/k$, then $L/K$ is finite if and only if $\tk_L/\tk$ is finite if and only if $k_L/k$ is finite. The second implication holds because the extensions $\tk_L/k_L$ and $\tk/k$ are both finite. For the first implication choose transcendence variables $x_1,\ldots, x_n$ for $K/k$, recall that the two extensions $K/ \tk (x_1,\ldots, x_n)$ and $L/ \tk_L (x_1,\ldots, x_n)$ are finite and write
		$$[L:K][K: \tk (x_1,\ldots, x_n)]=[L: \tk (x_1,\ldots, x_n)]=[L: \tk_L (x_1,\ldots, x_n)] [\tk_L:\tk]\,.$$
	\end{rem}
	
	\begin{lem}\label{Lemma_Separability_from_Function_Fields_Extensions_to_extensions_of_fields_of_constants}
		Let $L/k_L$ be an extension of $K/k$.  If $L/K$ is separable, then $\tk_L/\tk$ is separable.
		If $L/K$ is normal, then $\tk_L/k$ and $\tk_L/\tk$ are normal.
		Consequently,
		\begin{enumerate}
			\item[(i)] if $L/K$ is Galois, then $\tk_L/\tk$ is Galois;
			\item[(ii)] if $L/K$ is Galois and $\tk_L/k$ is separable, then $\tk_L/k$ is Galois;
			\item[(iii)] if $L/K$ is separable, then $\tk_L^N/k$ is Galois, where $\tk_L^N$ is the normal closure of $\tk_L$ in $\kbar$.
		\end{enumerate}
	\end{lem}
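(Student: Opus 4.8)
The plan is to convert each assertion into a statement about the splitting or the multiplicity of roots of minimal polynomials, and then to carry the relevant property down from $L/K$ to the extension of constant fields. The device that makes this work is that the constant field $\tk$ is, by its very definition, algebraically closed in $K$; hence Lemma \ref{Lemma_Linearly_Disjoint_Algebraic_Element}, applied with $\tk$ in the role of the ground field, shows that for every $\alpha \in \tk_L \subseteq \kbar$ the minimal polynomial of $\alpha$ over $K$ coincides with its minimal polynomial over $\tk$. I would record this identification once at the outset, since both halves of the lemma rest on it.

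First I would dispatch separability. If $L/K$ is separable, then each $\alpha \in \tk_L \subseteq L$ is separable over $K$, so its minimal polynomial over $K$ has no repeated root; by the identification this is also its minimal polynomial over $\tk$, so $\alpha$ is separable over $\tk$. Letting $\alpha$ range over $\tk_L$ yields that $\tk_L/\tk$ is separable.

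For normality, fix $\alpha \in \tk_L$ and let $m$ denote its minimal polynomial over $K$, equivalently over $\tk$. Since $\alpha \in L$ and $L/K$ is normal, $m$ splits completely in $L$; every root of $m$ is algebraic over $\tk \subseteq \kbar$ and therefore lies in $\kbar$, so in fact every root lies in $\kbar \cap L = \tk_L$. Thus $m$ splits in $\tk_L$, and as $\alpha$ varies this shows $\tk_L/\tk$ is normal. Running the identical argument with $k$ in place of $\tk$—legitimate precisely when $k$ is algebraically closed in $K$, so that minimal polynomials over $K$ and over $k$ coincide—gives that $\tk_L/k$ is normal as well.

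The three consequences then follow formally. Part (i) combines the two preceding paragraphs: when $L/K$ is Galois, $\tk_L/\tk$ is both separable and normal, hence Galois. For (ii), the normality argument gives $\tk_L/k$ normal, and adjoining the hypothesis that $\tk_L/k$ is separable makes it Galois. For (iii), separability of $L/K$ makes $\tk_L/\tk$ separable, hence (again using that $k$ is algebraically closed in $K$) $\tk_L/k$ separable, so its normal closure $\tk_L^N$ in $\kbar$ is separable and normal over $k$, that is, Galois. The one point demanding care—and the main obstacle—is the passage from the constant field $\tk$ to the ground field $k$: the minimal-polynomial identification is unconditionally available only over $\tk$, which is why the clean statements are those over $\tk$, and why the versions over $k$ must either invoke that $k$ is algebraically closed in $K$ or carry the explicit separability hypotheses of (ii) and (iii). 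I would also be careful \emph{not} to try to deduce $\tk_L/k$ normal from $\tk_L/\tk$ and $\tk/k$ by transitivity, since normality does not propagate through towers; the direct splitting argument is what is needed.
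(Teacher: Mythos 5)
Your proof is correct, and on one point it is more careful than the paper's own argument. The two proofs differ most in the separability step: the paper argues by contradiction, reducing (via a normal closure and a maximal separable subextension) to the case where $\tk_L/\tk$ would be purely inseparable, then taking $\alpha \in \tk_L \setminus \tk$ with $\alpha^{p^m} \in \tk$ and using $K \cap \tk_L = \tk$ to exhibit an inseparable element of $L \setminus K$, contradicting separability of $L/K$. Your direct route --- every $\alpha \in \tk_L$ is separable over $K$, and by Lemma \ref{Lemma_Linearly_Disjoint_Algebraic_Element} applied over $\tk$ (which is indeed algebraically closed in $K$) its minimal polynomial over $K$ equals its minimal polynomial over $\tk$ --- reaches the same conclusion with no reduction steps. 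For normality both proofs are the same conjugate-splitting argument, but run over different base fields: the paper proves $\tk_L/k$ normal directly, asserting that the conjugates over $k$ of any $\alpha \in \tk_L$ lie in $L$ ``because $L/K$ is normal,'' and then deduces the statement over $\tk$; you prove the statement over $\tk$ unconditionally and note that the argument over $k$ is legitimate exactly when $k$ is algebraically closed in $K$. Your caution is warranted, not pedantic: normality of $L/K$ only forces the $K$-conjugates of $\alpha$ into $L$, and identifying $k$-conjugates with $K$-conjugates is precisely the minimal-polynomial identification over $k$, which requires $k$ algebraically closed in $K$. That hypothesis is a standing assumption in the paper's introduction but is not restated in the lemma, and without it the assertions involving $k$ can genuinely fail: for $k=\mathbb{Q}$ and $L=K=\mathbb{Q}(\sqrt[3]{2})(t)$, the extension $L/K$ is trivially Galois, yet $\tk_L/k=\mathbb{Q}(\sqrt[3]{2})/\mathbb{Q}$ is not normal.

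One small imprecision in your closing remark: the explicit separability hypotheses in (ii) and (iii) do not substitute for that assumption. In the example just given, $L/K$ is Galois and $\tk_L/k$ is separable, yet (ii) fails because $\tk_L/k$ is not normal; so (ii) and (iii), like the normality of $\tk_L/k$, are only available once $k$ is algebraically closed in $K$ (equivalently $\tk=k$). Granted this, your derivation of the consequences (i)--(iii) is the same formal deduction the paper makes.
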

	\begin{proof}
		We first prove that $\tk_L/\tk$ is separable, assuming that $L/K$ is separable.
		We may suppose that $\tk_L / \tk$ is finite because if this extension is not separable,  then there is some element $\alpha\in \tk_L$ such that $\tk(\alpha)/\tk$ is finite and not separable. 
		We may also suppose that $\tk_L / \tk$ is normal, up to replacing $\tk_L$ by its normal closure in $\bar{k}$.
		Now assume that $\tk_L/\tk$ is not separable. Up to replacing $\tk$ by a separable extension, we may suppose that $\tk_L/\tk$ is purely inseparable. So let $p$ be the finite characteristic of $\tk$ and let $\alpha \in \tk_L \smallsetminus \tk$ be such that $\alpha^{p^m} \in \tk$ holds for some positive integer $m$. As $K \cap \tk_L = \tk$, we have $\alpha \in L \smallsetminus K$ such that $\alpha^{p^m} \in K$, contradicting the separability of $L/K$.
		
		We now assume that $L/K$ is normal and prove that $\tk_L/k$ is normal, which immediately implies that $\tk_L/\tk$ is normal. Consider an element of $\tk_L$, namely some $\alpha\in L$ that is algebraic over $k$. The conjugates of $\alpha$ are in $L$ because $L/K$ is normal and they are all algebraic over $k$, so they are in $\tk_L$.
		The last assertion is immediate.
	\end{proof}
	
	
	\subsection{Constant and  geometric extensions}
	
	We say that an algebraic extension $L/k_L$ of $K/k$ is \emph{constant} if $L = Kk'$ for some algebraic extension $k'$ of $k$. Note that we may replace $k'$ by $k'\tk$ because $L=K(k'\tk)$ and $k'\tk/k$ is algebraic. Hence, we may suppose without loss of generality\ that $\tk \subseteq k'$.
	
	\begin{thm}\label{Theorem_Separable}
		Let $K/k$ be a function field, and let $k'$ be an algebraic extension of $\tk$. If $K$ or $k'$ is separable over $\tk$ (in particular, if $\tk$ is perfect), then the constant field of $K k'$ is $k'$.
	\end{thm}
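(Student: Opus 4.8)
The plan is to prove that the constant field $\kbar\cap Kk'$ of $Kk'$ is exactly $k'$. Since $k'/\tk$ is algebraic and $k'\subseteq Kk'$, the inclusion $k'\subseteq\kbar\cap Kk'$ is automatic, so the entire content is the reverse inclusion. First I would reduce to a finite subextension: any $\gamma\in\kbar\cap Kk'$ already lies in $Kk''$ for some finite subextension $k''/\tk$ of $k'/\tk$, because $Kk'$ is the directed union of the subfields $Kk''$, and it then suffices to show $\gamma\in k''$. Note that $[Kk'':K]\leq[k'':\tk]<\infty$, so $Kk''/K$ is finite. At this point the two hypotheses play genuinely different roles, and I would treat them in separate cases.

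In the case where $k'/\tk$ is separable, each $k''/\tk$ is finite separable, so $Kk''/K$ is separable and hence $\gamma$ is separable over $K$. By Lemma \ref{Lemma_Linearly_Disjoint_Algebraic_Element} the minimal polynomials of $\gamma$ over $\tk$ and over $K$ coincide; since the latter is separable, $\gamma$ is separable over $\tk$, and moreover $[\tk(\gamma):\tk]=[K(\gamma):K]\leq[Kk'':K]=[k'':\tk]$, where the last equality is Lemma \ref{Lemma_Linearly_Disjoint_Algebraic_Element} applied to a primitive element of $k''/\tk$. Thus every element of the constant field $c'':=\kbar\cap Kk''$ is separable over $\tk$ of degree at most $[k'':\tk]$; since a separable algebraic extension whose elements have uniformly bounded degree is finite of that degree, I obtain $[c'':\tk]\leq[k'':\tk]$. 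Because $k''\subseteq c''$, this forces $c''=k''$, and in particular $\gamma\in k''\subseteq k'$.

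In the case where $K/\tk$ is separable, this together with $\tk$ being algebraically closed in $K$ says precisely that $K/\tk$ is a regular extension, and hence that $K$ is linearly disjoint from $\kbar$ over $\tk$ (see, e.g., \cite{VillaSalvador_2006}). I would then argue by comparing coordinates. Fix a $\tk$-basis $\{e_i\}_{i\in I}$ of $\kbar$ chosen so that a finite subset $J\subseteq I$ indexes a $\tk$-basis of $k''$. Linear disjointness makes $\{e_i\}_{i\in I}$ a $K$-linearly independent family inside $Kk'$, so the representation of any element as a $K$-linear combination of the $e_i$ is unique. Writing $\gamma\in Kk''$ as a $K$-combination supported on $J$, and simultaneously writing $\gamma\in\kbar$ as a $\tk$-combination of the $e_i$, uniqueness forces the $\tk$-combination to be supported on $J$ as well, whence $\gamma\in k''$. (Here the reduction to finite $k''$ is not even needed: the same computation applies verbatim to $k'$.)

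The hard part, and the reason both hypotheses are present, is the inseparable subtlety: the assumption that $\tk$ is algebraically closed in $K$ guarantees linear disjointness of $K$ only from \emph{separable} algebraic extensions of $\tk$, not from inseparable ones. Consequently, when $k'/\tk$ may be inseparable I must import separability from the $K$ side (regularity) to justify the coordinate argument, whereas when $K/\tk$ may be inseparable I must instead extract separability from the $k'$ side and control degrees through Lemma \ref{Lemma_Linearly_Disjoint_Algebraic_Element}. In each case the real work is showing that the constant field is not strictly larger than $k'$ but equal to it, and that is exactly where the respective separability input is consumed.
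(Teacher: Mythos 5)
Your proof is correct, but it takes a genuinely different route from the paper's. The paper first applies its Theorem \ref{Theorem_Existence-main} to $L=Kk'$ to learn that the constant field $\tk_L$ is a \emph{finite, purely inseparable} extension of $k'$; this reduces both cases to proving that $\tk_L/k'$ is separable, which it does via a primitive element of $k'/\tk$ (plus Lemma \ref{Lemma_Linearly_Disjoint_Algebraic_Element}) in the first case, and via a separating transcendence basis (plus Lemma \ref{Lemma_Separability_from_Function_Fields_Extensions_to_extensions_of_fields_of_constants}) in the second. You never invoke Theorem \ref{Theorem_Existence-main}: in the separable-$k'$ case you run an elementary degree count --- every constant of $Kk''$ is separable over $\tk$ of degree at most $[k'':\tk]$ by Lemma \ref{Lemma_Linearly_Disjoint_Algebraic_Element}, and a separable algebraic extension whose elements have uniformly bounded degree has degree at most that bound, forcing $c''=k''$ --- while in the separable-$K$ case you use that separable together with relatively algebraically closed means regular, hence $K$ is linearly disjoint from $\kbar$ over $\tk$, and you finish by comparing coordinates in a $\tk$-basis of $\kbar$ extending one of $k'$. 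Both routes are sound. The paper's approach stays entirely within results it proves itself (Theorem \ref{Theorem_Existence-main} is needed elsewhere anyway), and its ``purely inseparable $+$ separable $=$ trivial'' scheme treats the two cases uniformly. Your approach buys two things: the first case is more elementary and handles an infinite extension $k'/\tk$ explicitly through the directed-union reduction (the paper's step ``fix $\alpha$ with $k'=\tk(\alpha)$'' tacitly assumes $k'/\tk$ finite), and the second case needs no finiteness reduction at all; the price is importing the standard regularity criterion (relatively algebraically closed $+$ separable $\Leftrightarrow$ linearly disjoint from $\kbar$), which appears in Lang and Villa Salvador but is not proved in the paper. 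One point to make explicit: in your second case ``separable'' for the transcendental extension $K/\tk$ must be read as linear disjointness from the perfect closure (equivalently, by MacLane's criterion, separably generated), which is exactly the hypothesis the regularity criterion requires and is consistent with how the paper's own proof uses the hypothesis.
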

	\begin{proof}
		Set $L:=Kk'$ and denote by $\tk_L$ its constant field. By Theorem 
		\ref{Theorem_Existence-main} the extension $\tk_L/k'$ is finite and purely inseparable. So it suffices to prove that $\tk_L/k'$ is separable.
		
		Suppose that $k'/\tk$ is separable and fix $\alpha \in k'$ such that $k' = \tk(\alpha)$. Since the minimal polynomial of $\alpha$ over $K$ divides the one over $\tk$, we deduce that $L=K(\alpha)$ is separable over $K$. To conclude remark that, for an element of $\tk_L$, the minimal polynomial over $\tk$ is the same as the one over $K$ by Lemma \ref{Lemma_Linearly_Disjoint_Algebraic_Element}.
		
		Now suppose that $K/\tk$ is separable and let $x_1,\cdots,x_n \in K$ be a transcendental basis over $\tk$. Notice that $K/\tk(x_1,\cdots,x_n)$ is a finite separable extension and hence $L$ is a finite separable extension of $k'(x_1,\cdots,x_n)$. We deduce that $\tk_L(x_1,\cdots,x_n)/k'(x_1,\cdots,x_n)$ is a finite separable extension. So $\tk_L/\tk$ and hence $\tk_L/k'$ is separable by Lemma \ref{Lemma_Separability_from_Function_Fields_Extensions_to_extensions_of_fields_of_constants}.
	\end{proof}
	
	\begin{lem}\label{42}
		If $C/K$ is a finite constant Galois extension and $K'/K$ is a finite Galois extension whose maximal constant subextension is $L/K$, then $CL/K$ is the maximal constant subextension of $CK'/K$.
	\end{lem}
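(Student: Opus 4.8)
The plan is to reduce the whole statement to a single computation, namely the constant field of the compositum $CK'$, by first isolating a general description of maximal constant subextensions and then applying Theorem \ref{Theorem_Separable} after a careful choice of base function field.

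First I would record the elementary observation, valid for any finite extension $F/K$, that the maximal constant subextension of $F/K$ is $K(F\cap\kbar)$, i.e. $K$ adjoined with the constant field of $F$: any constant subextension $Kk'$ with $k'/k$ algebraic satisfies $k'\subseteq F\cap\kbar$, while $K(F\cap\kbar)$ is itself a constant subextension contained in $F$. Applying this to $F=K'$ identifies $L=K\tk_{K'}$, where I write $\tk_{K'}:=K'\cap\kbar$; applying it to the finite extension $F=CK'$ shows that the target field, the maximal constant subextension of $CK'/K$, is $K\kappa$ with $\kappa:=CK'\cap\kbar$ the constant field of $CK'$. Writing $\tk_C:=C\cap\kbar$, it then suffices to prove $\kappa=\tk_C\tk_{K'}$, since that yields $K\kappa=K\tk_C\tk_{K'}=(K\tk_C)(K\tk_{K'})=CL$.

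The inclusion $\tk_C\tk_{K'}\subseteq\kappa$ is immediate, and the crux is the reverse inclusion. The key idea is to regard $CK'$ not as an extension built on the constant field $\tk$ of $K$, but as a constant extension of the function field $K'/\tk_{K'}$: since $C=K\tk_C$ we have $CK'=K'\tk_C=K'(\tk_C\tk_{K'})$, a constant extension of $K'$ obtained by adjoining the algebraic extension $\tk_C\tk_{K'}$ of $\tk_{K'}$. To invoke Theorem \ref{Theorem_Separable}, which would then pin down the constant field of this compositum as exactly $\tk_C\tk_{K'}$, I need a separability hypothesis, and this is where the Galois assumption on $C/K$ enters: by Lemma \ref{Lemma_Separability_from_Function_Fields_Extensions_to_extensions_of_fields_of_constants} the extension $\tk_C/\tk$ is Galois, hence separable, so $\tk_C\tk_{K'}/\tk_{K'}$ is separable as a base change of a separable extension. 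Theorem \ref{Theorem_Separable} then gives $\kappa=\tk_C\tk_{K'}$, completing the argument.

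The main obstacle I anticipate is precisely this choice of base for the constant-extension argument. Over $K/\tk$ the field $\tk_C\tk_{K'}$ need not be separable over $\tk$, so Theorem \ref{Theorem_Separable} does not apply directly and one could only conclude that $\kappa/\tk_C\tk_{K'}$ is finite purely inseparable, which is not strong enough. Shifting the base to $K'/\tk_{K'}$ absorbs the (possibly inseparable) part $\tk_{K'}/\tk$ into the ground constant field, leaving only the separable extension $\tk_C/\tk$ to be adjoined; this is what makes the separability hypothesis of Theorem \ref{Theorem_Separable} available and forces the constant field of $CK'$ to be no larger than $\tk_C\tk_{K'}$.
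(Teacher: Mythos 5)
Your proof is correct, but it takes a genuinely different route from the paper's. The paper argues by contradiction inside Galois theory: since $C$ is contained in the maximal constant subextension $M$ of $CK'/K$, the Galois correspondence for the compositum gives $M=CL'$ with $L'=M\cap K'$ a subextension of $K'/K$; Lemma \ref{algebraic} then forces $L'/K$ to be constant, contradicting the maximality of $L$ unless $M=CL$. You instead compute the constant field of $CK'$ outright: after the (correct) elementary observation that the maximal constant subextension of any finite extension $F/K$ is $K(F\cap\kbar)$, you identify the constant field of $CK'$ as $\tk_C\tk_{K'}$ by viewing $CK'=K'(\tk_C\tk_{K'})$ as a constant extension of the re-based function field $K'/\tk_{K'}$ and invoking Theorem \ref{Theorem_Separable}, the separability input ($\tk_C/\tk$ Galois, hence separable) coming from Lemma \ref{Lemma_Separability_from_Function_Fields_Extensions_to_extensions_of_fields_of_constants}. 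Your approach buys two things: it nowhere uses that $K'/K$ is Galois (only that it is finite, with $C/K$ Galois entering only through separability), so it proves a more general statement; and it yields the explicit description of the maximal constant subextension as $K(\tk_C\tk_{K'})$, which the paper's contradiction argument does not exhibit. The price is heavier machinery: Theorem \ref{Theorem_Separable} rests on Theorem \ref{Theorem_Existence-main}, whereas the paper needs only the lighter Lemma \ref{algebraic}. One small inaccuracy in your closing discussion: under the lemma's actual hypotheses $K'/K$ is Galois, so $\tk_{K'}/\tk$ is separable by Lemma \ref{Lemma_Separability_from_Function_Fields_Extensions_to_extensions_of_fields_of_constants}, and hence $\tk_C\tk_{K'}/\tk$ is separable after all; the unavoidable reason for re-basing to $K'$ is rather that $CK'$ is a constant extension of $K'$ but not of $K$ (adjoining $\tk_C\tk_{K'}$ to $K$ only produces $CL$). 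This does not affect your proof, which never relies on that remark.
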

	\begin{proof}
		Write $C=k_CK$ and $L=k_LK$ for some fields $k_C,k_L$ containing $\tk$ and contained in $\bar k$. The extension $CL/K$ is constant because we have $CL=k_Ck_LK$. If it is not maximal, let $M/K$ be the maximal constant subextension of $CK'/K$. By Galois theory, we have $M=CL'$ for some subextension $L'/K$ of $K'/K$. Now, $L'$ strictly contains $L$, and $L'/K$ must be constant by Lemma \ref{algebraic}, contradicting the maximality of $L$.
	\end{proof}
	If $L/k_L$ is an extension of $K/K$, then $\tk \subseteq L \cap \kbar$. This extension is said to be \emph{geometric} if the converse inclusion holds. 
	We call an extension \emph{geometric} if $L\cap \kbar = \tk$ (remark that the inclusion $\supseteq$ always holds). Notice that an algebraic extension is geometric if and only if $\tk_L=\tk$.
	
	\begin{rem}\label{constant-and-geometric}
		An algebraic extension $L/\tk_L$ of $K/\tk$ that is constant and geometric must be trivial. For a constant extension we have $L=Kk'$ for some $\tk \subseteq k'\subseteq \kbar$ and $k'\subseteq (L\cap \kbar) \subseteq  \tk_L$ so, assuming $\tk_L=\tk$, we get $k'=\tk$ and hence $L=K$.
	\end{rem}
	
	\begin{rem}\label{Remark_Degree_Dividing_ell_Constant}
		An algebraic separable extension of function fields of prime degree is constant if and only if it is not geometric. The ``only if'' direction is a consequence of Remark \ref{constant-and-geometric}. Conversely, suppose that the extension is not geometric. So there exists some $\gamma\in \tk_L\setminus \tk$. Since $\tk_L\subseteq \kbar$ we deduce that $\gamma\notin K$ hence it is a primitive element for $L/K$, showing that $L=K \tk(\gamma)$ is constant.
	\end{rem}
	
	Recall that any group whose order is the power of a prime $\ell$ has a normal subgroup of index $\ell$.
	
	\begin{lem}\label{lem_geometric}
		Let $\ell$ be a prime number different from $\ch(k)$. Let $L/k_L$ be a finite Galois extension of $K/k$ such that the degree of $L/K$ is a power of $\ell$. In particular, there exists a Galois subextension of $L/K$ of degree $\ell$. Write $L'/K$ for the largest subextension of exponent $\ell$ (namely, the compositum of all such extensions). 
		Then $L/k_L$  is geometric if and only if $L'/k_{L'}$ is geometric.
	\end{lem}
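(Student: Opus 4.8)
The plan is to translate the statement into the Galois correspondence and exploit that $G:=\Gal(L/K)$ is a finite $\ell$-group. One direction is immediate: if $L/k_L$ is geometric, i.e.\ $\tk_L=\tk$, then since $L'\subseteq L$ we have $\tk\subseteq \tk_{L'}\subseteq \tk_L=\tk$, so $\tk_{L'}=\tk$ and $L'/k_{L'}$ is geometric. It therefore remains to prove the converse, which I would establish in contrapositive form: assuming $L/k_L$ is not geometric, I produce a nontrivial constant subextension inside $L'/K$.

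So suppose $\tk_L\supsetneq \tk$. By Lemma \ref{Lemma_Separability_from_Function_Fields_Extensions_to_extensions_of_fields_of_constants}(i) the extension $\tk_L/\tk$ is Galois, and I claim its Galois group is a nontrivial $\ell$-group. Indeed, $K\tk_L/K$ is a constant Galois subextension of $L/K$, so $\Gal(K\tk_L/K)$ is a quotient of the $\ell$-group $G$; moreover, since $\tk$ is algebraically closed in $K$, Lemma \ref{Lemma_Linearly_Disjoint_Algebraic_Element} gives $[K\tk_L:K]=[\tk_L:\tk]$ and hence that restriction is an isomorphism $\Gal(K\tk_L/K)\simeq \Gal(\tk_L/\tk)$, so the latter has order a power of $\ell$. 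A nontrivial finite $\ell$-group has a normal subgroup of index $\ell$ (as recalled just before the statement), so there is an intermediate field $\tk\subsetneq k_1\subseteq \tk_L$ with $k_1/\tk$ Galois of degree $\ell$, hence of exponent $\ell$.

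Finally I would observe that $Kk_1/K$ is a constant subextension of $L/K$ of exponent $\ell$: it is Galois over $K$ with $\Gal(Kk_1/K)\simeq \Gal(k_1/\tk)$ of exponent $\ell$ by the same linear disjointness. By the defining maximality of $L'$ as the compositum of all subextensions of $L/K$ of exponent $\ell$, this forces $Kk_1\subseteq L'$. Since $k_1\subseteq Kk_1\subseteq L'$ and $k_1\subseteq \kbar$, it follows that $k_1\subseteq L'\cap \kbar=\tk_{L'}$, so $\tk_{L'}\supseteq k_1\supsetneq \tk$ and $L'/k_{L'}$ is not geometric, as required. The main point to get right---and essentially the only place where something beyond formal manipulation is used---is the passage from an arbitrary nontrivial constant subextension of $L$ to one of exponent $\ell$; this is exactly where the $\ell$-group structure of $\Gal(\tk_L/\tk)$, transported from $G$ via linear disjointness over $\tk$, is needed, and the remaining steps are then bookkeeping with the Galois correspondence.
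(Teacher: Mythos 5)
Your proof is correct and follows essentially the same route as the paper's: the easy direction by containment of constant fields, and the converse by noting (via Lemma \ref{Lemma_Separability_from_Function_Fields_Extensions_to_extensions_of_fields_of_constants} and Lemma \ref{Lemma_Linearly_Disjoint_Algebraic_Element}) that $\tk_L/\tk$ is Galois with group a nontrivial $\ell$-group, extracting a degree-$\ell$ Galois subextension $k_1/\tk$, and observing that $Kk_1$ has exponent $\ell$ over $K$ and hence lies in $L'$, forcing $\tk_{L'}\neq\tk$. The only difference is that you spell out the justifications (the restriction isomorphism and the inclusion $Kk_1\subseteq L'$) that the paper leaves implicit.
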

	\begin{proof}
		If $\tk_L =\tk$, then $\tk_{L'} =\tk$. Conversely, suppose that $\tk_L/\tk$ is not trivial. This extension is finite (because $L/K$ is finite) and Galois (by  Lemma \ref{Lemma_Separability_from_Function_Fields_Extensions_to_extensions_of_fields_of_constants}) and its degree is a power of $\ell$ (by Lemma \ref{Lemma_Linearly_Disjoint_Algebraic_Element}). Then it has a Galois subextension of degree $\ell$. This extension is of the form $\tk(\alpha)/\tk$ where $\alpha\in \kbar \setminus \tk$. Since $\alpha\in L'$ we deduce that $\tk_{L'} \neq  \tk$.
	\end{proof}
	
	
	\subsection{Cyclotomic extensions}
	
	An extension $L/k_L$ of $K/k$ is called \emph{cyclotomic} if $L\subseteq K(\zeta_m)$ holds for some integer $m \geq 1$. Since $K(\zeta_m)/K$ is finite and Abelian, a cyclotomic extension of function fields is finite and Abelian.
	
	In the following results, $\ell$ is a prime number different from $\ch(k)$ and $k(\zeta_{\ell^\infty})=\cup_{n\geq 1} k(\zeta_{\ell^n})$.
	
	\begin{prop}\label{Proposition_Constant_Field_Root_of_Unity}
		Let $K/k$ be a function field. For every $M\geq 1$ that is not divisible by $\ch(k)$, the constant field of $K(\zeta_{M})/k$ is $\tk(\zeta_{M})$. Moreover, the constant field of $K(\zeta_{\ell^\infty})/k$ is $\tk(\zeta_{\ell^\infty})$.
	\end{prop}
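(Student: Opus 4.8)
The plan is to recognize the cyclotomic extension as a \emph{constant} extension and then invoke Theorem \ref{Theorem_Separable} directly. Set $k' := \tk(\zeta_M)$. Since $\tk \subseteq K$, we have $Kk' = K\tk(\zeta_M) = K(\zeta_M)$, so the constant field we must compute is precisely the constant field of the constant extension $Kk'/K$. Moreover $k'$ is algebraic over the constant field $\tk$ of $K/k$, since $\tk/k$ is algebraic and $\zeta_M$ is algebraic over $k$. Thus $K/k$ and $k'$ fit the setup of Theorem \ref{Theorem_Separable}, whose conclusion is exactly that the constant field of $Kk'$ equals $k'=\tk(\zeta_M)$. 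In this way the whole content of the finite case is packaged into that theorem, including the (otherwise separate) easy inclusion $\tk(\zeta_M)\subseteq$ (constant field).

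The single hypothesis that genuinely needs checking is separability: Theorem \ref{Theorem_Separable} requires that $K$ or $k'$ be separable over $\tk$, and I would verify the second option. Because $M$ is not divisible by $\ch(k)$, the polynomial $x^M-1$ is separable over $k$ (its formal derivative $Mx^{M-1}$ is coprime to it when $M$ is invertible), so $\zeta_M$ is separable over $\tk$ and hence $k'=\tk(\zeta_M)$ is a separable extension of $\tk$. This is the only point where the coprimality assumption $\ch(k)\nmid M$ is used, and it is precisely what makes Theorem \ref{Theorem_Separable} applicable; I regard it, together with the identification $Kk'=K(\zeta_M)$, as the only substantive step, so I do not anticipate a real obstacle.

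For the ``moreover'' assertion I would pass to the union. By definition $K(\zeta_{\ell^\infty})=\bigcup_{n\geq 1}K(\zeta_{\ell^n})$ and similarly $\tk(\zeta_{\ell^\infty})=\bigcup_{n\geq 1}\tk(\zeta_{\ell^n})$, and the inclusion $\tk(\zeta_{\ell^\infty})\subseteq K(\zeta_{\ell^\infty})$ into the constant field is immediate. Conversely, any element of $K(\zeta_{\ell^\infty})$ that is algebraic over $k$ already lies in some finite layer $K(\zeta_{\ell^n})$, whose constant field is $\tk(\zeta_{\ell^n})$ by the first part; hence it lies in $\tk(\zeta_{\ell^\infty})$. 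The infinite case therefore reduces cleanly to the finite one precisely because algebraicity over $k$ is a finitary condition, and no further argument is required.
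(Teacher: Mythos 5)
Your proof is correct and follows essentially the same route as the paper: verify that $\tk(\zeta_M)/\tk$ is separable (using $\ch(k)\nmid M$) and apply Theorem \ref{Theorem_Separable} to the constant extension $K(\zeta_M)=K\tk(\zeta_M)$, then handle $K(\zeta_{\ell^\infty})$ by reducing each algebraic element to a finite layer $K(\zeta_{\ell^n})$. Your write-up merely makes explicit two points the paper leaves implicit (the identification $Kk'=K(\zeta_M)$ and the derivative argument for separability), so there is nothing to add.
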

	\begin{proof}
		By the assumption on $M$, the field extension $\tk(\zeta_{M})/\tk$ is separable. Hence, by Theorem \ref{Theorem_Separable} the constant field of $K(\zeta_{M})$ is $\tk(\zeta_{M})$. The field $\tk(\zeta_{\ell^\infty})$ is contained in the constant field of $K(\zeta_{\ell^\infty})$ because its elements are algebraic over $k$.
		The converse implication holds because an element of $K(\zeta_{\ell^\infty})$ that is algebraic over $k$ is an element of $K(\zeta_{\ell^m})$ for some integer $m \geq 1$ and hence is contained in the constant field of $K(\zeta_{\ell^m})$, which we have shown to be $\tk(\zeta_{\ell^m})$.
	\end{proof}
	
	\begin{rem}\label{new-rem}
		Let $K/k$ be a field extension such that $k$ is algebraically closed in $K$. Fix some algebraic closure $\overline{K}$ of $K$ and choose an algebraic closure $\kbar$ of $k$ contained in $\overline{K}$.
		Let $F/K$ be a finite extension of degree coprime to $\ch(k)$ that is contained in $K\kbar$. Then Lemma \ref{algebraic} applies. Indeed, it is clear that $F\subset KL'$ for some finite and normal extension $L'$ of $k$. Let $L/k$ be the maximal separable subextension of $L'/k$ hence $L'/L$ is purely inseparable. So
		$KL'/KL$ has degree a power of $\ch(k)$ and it has $FL/KL$ as a subextension. This is only possible if $FL=KL$, giving $F\subseteq KL$.
	\end{rem}
	
	\begin{lem}\label{geom-equiv}
		Let $f \in K^\times$. The following statements are equivalent: 
		\begin{enumerate}
			\item[(i)] There exists $g \in K\kbar$ such that $f = g^\ell$.
			\item[(ii)] For some choice of $\ell$-th root $\sqrt[\ell]{f}$ of $f$, the extension $K(\sqrt[\ell]{f})/K$ is constant.
			\item[(iii)] For every $\ell$-th root $\sqrt[\ell]{f}$ of $f$, the extension $K(\sqrt[\ell]{f})/K$ is constant.
		\end{enumerate}
		Moreover, these statements are implied by 
		\begin{enumerate}
			\item[(iv)] There exist $c \in \tk$ and $b \in K$ such that $f = c b^\ell$.
		\end{enumerate}
		and, if $\zeta_\ell\in K$ (or if $f\in K^{\times \ell}$), they are equivalent to it.
	\end{lem}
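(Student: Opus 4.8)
The plan is to establish the core equivalence (i) $\Leftrightarrow$ (ii) $\Leftrightarrow$ (iii) first, then dispatch the easy implication (iv) $\Rightarrow$ (i), and finally prove the converse (i) $\Rightarrow$ (iv) under the extra hypothesis, which I expect to be the delicate part. For the core equivalence I would run the cycle (ii) $\Rightarrow$ (i) $\Rightarrow$ (iii) $\Rightarrow$ (ii). Here (iii) $\Rightarrow$ (ii) is immediate, since $x^\ell - f$ always has a root in $\overline{K}$. For (ii) $\Rightarrow$ (i), if $K(\sqrt[\ell]{f})/K$ is constant then by definition it equals $Kk'$ for some algebraic $k'/k$, which we may take inside $\kbar$; hence $\sqrt[\ell]{f}\in K\kbar$ and its $\ell$-th power $f$ witnesses (i). For (i) $\Rightarrow$ (iii), the key observation is that $\zeta_\ell\in\kbar$, so if one $\ell$-th root $g$ of $f$ lies in $K\kbar$ then every $\ell$-th root $\zeta_\ell^j g$ does too; each generates a finite extension $K(\zeta_\ell^j g)/K$ of degree dividing $\ell$ (hence coprime to $\ch(k)$) contained in $K\kbar$, so Remark \ref{new-rem} lets me apply Lemma \ref{algebraic} to obtain $K(\zeta_\ell^j g)=K\big(K(\zeta_\ell^j g)\cap\kbar\big)$, which is constant by definition.

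The implication (iv) $\Rightarrow$ (i) is immediate: writing $f=cb^\ell$ and choosing an $\ell$-th root $\gamma\in\kbar$ of $c\in\tk$, the element $g:=\gamma b\in K\kbar$ satisfies $g^\ell=f$.

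The crux is the converse (i) $\Rightarrow$ (iv) when $\zeta_\ell\in K$, the case $f\in K^{\times\ell}$ being trivial with $c=1$. I would first note that $\zeta_\ell\in K\cap\kbar=\tk$. Starting from $f=g^\ell$ with $g\in K\kbar$, the equivalence already proved shows $K(g)/K$ is constant, so $K(g)=K\tk'$ with $\tk':=K(g)\cap\kbar$. If $g\in K$ we are done with $c=1$; otherwise $[K(g):K]=\ell$, and because $\zeta_\ell\in K$ this is a cyclic (Kummer) extension, whence Lemma \ref{Lemma_Separability_from_Function_Fields_Extensions_to_extensions_of_fields_of_constants} together with the degree equality of Lemma \ref{Lemma_Linearly_Disjoint_Algebraic_Element} shows $\tk'/\tk$ is cyclic of degree $\ell$. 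Since $\zeta_\ell\in\tk$, classical Kummer theory over $\tk$ gives $\tk'=\tk(\delta)$ with $\delta^\ell=c\in\tk^\times$.

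The main step is then a Galois-descent matching of the two roots. Writing $\sigma$ for a generator of $\Gal(K(g)/K)$, one has $\sigma(g)=\zeta_\ell^a g$ and $\sigma(\delta)=\zeta_\ell^b\delta$ with $a,b\not\equiv 0\pmod\ell$ (as $g,\delta\notin K$ while $g^\ell,\delta^\ell\in K$ and $\zeta_\ell\in K(g)$); choosing $j$ with $a\equiv jb\pmod\ell$ makes $g\delta^{-j}$ fixed by $\sigma$, so $b_0:=g\delta^{-j}\in K$. Then $f=g^\ell=b_0^\ell\,(\delta^\ell)^j=b_0^\ell\,c^j$ with $c^j\in\tk$, which is exactly (iv). I expect this descent to be the principal obstacle, since it is precisely where the hypothesis $\zeta_\ell\in K$ enters essentially: without it $K(g)/K$ need not be Galois and the roots $g$ and $\delta$ cannot be compared, so the constant $c$ cannot be extracted.
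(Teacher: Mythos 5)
Your overall route is sound and, in substance, parallels the paper's own proof: the equivalence of (i)--(iii) rests on Lemma~\ref{algebraic} via Remark~\ref{new-rem}, the implication (iv)~$\Rightarrow$~(i) is the trivial one, and the crux (extracting the constant $c$ when $\zeta_\ell\in K$) is Kummer theory. Where the paper simply invokes the Kummer correspondence over $K(\zeta_\ell)$ and $\tk(\zeta_\ell)$ to match the radical of the constant extension against $f$, you re-prove that correspondence by hand: you diagonalize the action of a generator $\sigma$ of $\Gal(K(g)/K)$ on the two Kummer generators $g$ and $\delta$ and descend $g\delta^{-j}$ to $K$. That descent is correct and complete (the needed inputs are all justified: $\tk'/\tk$ is Galois of degree $\ell$ by Lemmas~\ref{Lemma_Separability_from_Function_Fields_Extensions_to_extensions_of_fields_of_constants} and~\ref{Lemma_Linearly_Disjoint_Algebraic_Element}, and $\delta\notin K$ because $K\cap\kbar=\tk$), so it is a perfectly good, if more explicit, substitute for the paper's citation of Kummer theory.

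There is, however, one step that fails as written. In (i)~$\Rightarrow$~(iii) you assert that each root $\zeta_\ell^j g$ generates an extension of $K$ of degree dividing $\ell$, ``hence coprime to $\ch(k)$'', which is what licenses the appeal to Remark~\ref{new-rem}. The minimal polynomial of $\zeta_\ell^j g$ divides $x^\ell-f$, so the degree is at most $\ell$, but it need not divide $\ell$: if $f=b^\ell\in K^{\times\ell}$ with $b\in K$, then every $\ell$-th root of $f$ has the form $\zeta_\ell^{i}b$, so $K(\zeta_\ell^j g)=K(\zeta_\ell^{i})$ is a cyclotomic extension whose degree divides $\ell-1$, and this degree can be divisible by $\ch(k)$ --- for instance $k=\mathbb{F}_3$, $K=\mathbb{F}_3(t)$, $\ell=7$ gives $[K(\zeta_7):K]=6$, since the multiplicative order of $3$ modulo $7$ is $6$. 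In that situation Remark~\ref{new-rem} does not apply as stated. The conclusion of (iii) is still true there, but for a reason your argument does not supply: $K(\zeta_\ell^{i}b)=K\,k(\zeta_\ell^{i})$ is constant directly from the definition. The clean fix, and the one the paper uses, is to dispose of the case $f\in K^{\times\ell}$ once and for all at the very start (all four conditions then hold trivially, the relevant extensions being cyclotomic, hence constant); after this reduction $x^\ell-f$ is irreducible, every root honestly generates a degree-$\ell$ extension, and your argument goes through verbatim. Note that your proposal defers the case $f\in K^{\times\ell}$ only to the (i)~$\Rightarrow$~(iv) step; it must be excluded before the core equivalence as well.
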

	\begin{proof}
		First notice that if $f\in K^{\times \ell}$ then all conditions $(i)$ to $(iv)$ hold (remark that $K(\zeta_\ell)/K$ is constant). So now we may suppose that $f\notin K^{\times \ell}$ hence the polynomial $x^\ell-f$ is irreducible in $K[x]$ by \cite[Theorem 9.1 of Chapter VI]{lang_2002}. In particular, the extension $K(\sqrt[\ell]{f})/K$ has degree $\ell$ for any choice of the $\ell$-th root of $f$.
		
		To complete the proof of the last assertion we show that $(iii)$ implies that there is some $c \in \tk(\zeta_\ell)$ and $b \in K(\zeta_\ell)$ such that $f = c b^\ell$. We may clearly reduce to the case $f\notin K(\zeta_\ell)^{\times \ell}$. Recall from Proposition \ref{Proposition_Constant_Field_Root_of_Unity} that the constant field of $K(\zeta_\ell)$ is $\tk(\zeta_\ell)$.
		By assumption the extension $K(\zeta_\ell, \sqrt[\ell]{f})/K(\zeta_\ell)$
		is constant, so there is an extension $k'/\tk(\zeta_\ell)$ that has degree $\ell$ (by Lemma \ref{Lemma_Linearly_Disjoint_Algebraic_Element}) 
		such that $K(\zeta_\ell, \sqrt[\ell]{f}) = K(\zeta_\ell)k'$. By Kummer theory over $\tk(\zeta_\ell)$ and over $K(\zeta_\ell)$ there is $c\in \tk(\zeta_\ell)^\times$ such that $k'(\zeta_\ell)=\tk(\zeta_\ell, \sqrt[\ell]{c})$ and we have $f c^{-1}\in K(\zeta_\ell)^{\times \ell}$. Thus we
		have $f=c b^\ell$ where $b$ is an $\ell$-th root of $f c^{-1}$ in $K(\zeta_\ell)$.
		
		We are left to prove the equivalence of $(i)$,$(ii)$, and $(iii)$ in the case where $f\notin K^{\times \ell}$, noticing that $(iii) \Rightarrow (ii)$ is obvious. 
		
		$(i) \Rightarrow (ii)$: By Lemma  \ref{algebraic} (that can be applied thanks to Remark \ref{new-rem}) we deduce that $K(g)/K$ is constant, so we conclude by choosing $\sqrt[\ell]{f}=g$.
		
		$(ii) \Rightarrow (i)$: By assumption, $K(\sqrt[\ell]{f}) = K k'$ with $k'$ a subfield of $\kbar$ and we may take $g = \sqrt[\ell]{f}$.
		
		$(ii) \Rightarrow (iii)$: Let $\alpha,\beta \in \overline{K}\setminus K$ be such that $\alpha^\ell = \beta^\ell= f$.
		Suppose that $K(\alpha)/K$ is constant hence $K(\beta)/K$ is contained in the constant extension $K(\alpha, \zeta_\ell)/K$.
		By Lemma  \ref{algebraic} (that can be applied thanks to Remark \ref{new-rem}) we deduce that $K(\beta)/K$ is constant.
	\end{proof}
	
	
	\section{An auxiliary result on function fields extensions}\label{sec:ex-app}
	
	The following result generalizes  \cite[p.254, Theorem 8.4.2]{VillaSalvador_2006}
	from transcendence degree $1$ to any finite transcendence degree. We let $K/k$ be a function field, and we suppose that $k$ is algebraically closed in $K$.
	
	\begin{thm}\label{Theorem_Existence-main} 
		If $k'$ is an extension of $k$, then there exists a function field $L/k_L$ that is an extension of $K/k$ such that $L = k_LK $ and there is a $\tk$-isomorphism $\lambda: k_L \to k'$.
		The above properties imply that $\tk_L$ is a purely inseparable finite extension of $k_L$.
		Finally, if $\hat{L}$, $k_{\hat{L}}$, $\hat{\lambda}$ are also as above, there exists a $K$-isomorphism $\hat{L} \to L$ extending
		$\lambda^{-1} \circ \hat{\lambda}$.
	\end{thm}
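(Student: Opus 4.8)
The plan is to construct $L$ as a residue field of the tensor product $K\otimes_k k'$. Since $k$ is algebraically closed in $K$, it is in particular separably algebraically closed in $K$, so $K/k$ is a primary extension; the key input is that for a primary extension the ring $K\otimes_k k'$ has a \emph{unique} minimal prime ideal $\mathfrak{p}$, namely its nilradical. I would then set $A:=(K\otimes_k k')/\mathfrak{p}$, a finitely generated $k'$-algebra that is a domain, and let $L:=\mathrm{Frac}(A)$. The structure maps $x\mapsto\overline{x\otimes 1}$ and $c\mapsto\overline{1\otimes c}$ embed $K$ and a copy $k_L$ of $k'$ into $L$; taking $\lambda\colon k_L\to k'$ to be the inverse of $c\mapsto\overline{1\otimes c}$ gives the required $k$-isomorphism (recall $\tk=k$ here). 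As $A$ is generated as a ring by the images of $K$ and $k'$ we get $L=k_LK$, and since $L$ is finitely generated over $k'$ with $\trdeg_{k'}L=\dim(K\otimes_k k')=\trdeg_k K=n$, the field $L/k_L$ is a function field in $n$ variables extending $K/k$.

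For the assertion on $\tk_L$, I would show that $L/k_L$ is again primary, i.e.\ that $k_L$ is separably algebraically closed in $L$; this immediately makes $\tk_L/k_L$ purely inseparable, and its finiteness is then the general fact (used already in Section~\ref{sec:pre}) that the constant field of a function field is finite over the base. To prove primality of $L/k_L$, I would reduce to $k'/k$ finite: adjoining a transcendence basis $\{t_i\}$ of $k'/k$ and invoking Lemma~\ref{Lemma_Algebraically_Closed_Preserved_After_Adding_Variables} replaces $(K,k)$ by $(K(\{t_i\}),k(\{t_i\}))$ with the base still algebraically closed, after which only finitely many elements of $k'$ are relevant. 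For $k'/k$ finite \emph{separable}, Lemma~\ref{Lemma_Linearly_Disjoint_Algebraic_Element} shows $K\otimes_k k'$ is already a field and Lemma~\ref{algebraic} pins down $\tk_L=k_L$ by a degree count; for $k'/k$ finite \emph{purely inseparable} one checks that the elements $1\otimes c-c\otimes 1$ are nilpotent, so that $L/L_s$ (where $L_s$ is the field built from the separable closure of $k$ in $k'$) is purely inseparable and contributes no new separable constants. Splitting a general finite $k'/k$ through its separable closure combines the two cases.

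Uniqueness I would deduce from the universal property of the tensor product. Given another triple $(\hat{L},k_{\hat{L}},\hat{\lambda})$, the inclusion $K\hookrightarrow\hat{L}$ together with $\hat{\lambda}^{-1}\colon k'\to k_{\hat{L}}\hookrightarrow\hat{L}$ agree on $k$ and hence induce a $k$-algebra map $K\otimes_k k'\to\hat{L}$, $x\otimes c\mapsto x\,\hat{\lambda}^{-1}(c)$. Its image generates $\hat{L}$ because $\hat{L}=k_{\hat{L}}K$, and its kernel is a prime containing $\mathfrak{p}$ (as $\hat{L}$ is reduced); comparing dimensions, $\trdeg_{k'}\hat{L}=n=\dim A$, forces the kernel to equal $\mathfrak{p}$. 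Hence the map factors as an isomorphism $A\xrightarrow{\sim}A_{\hat{L}}$ onto the subring it generates in $\hat{L}$, inducing a field isomorphism $L\xrightarrow{\sim}\hat{L}$ that is $K$-linear and restricts to $\lambda^{-1}\circ\hat{\lambda}$ on $k_{\hat{L}}$; its inverse is the desired $K$-isomorphism $\hat{L}\to L$.

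The main obstacle, and the fact I would isolate as a preliminary, is precisely the unique-minimal-prime property of $K\otimes_k k'$ when $k$ is (separably) algebraically closed in $K$, together with its descent to $L/k_L$ — equivalently, the geometric irreducibility of $K/k$ and its stability under base change by $k'$. Once this is in hand, all three assertions follow from routine transcendence-degree and tensor-product bookkeeping; I would either cite a standard reference on primary (geometrically irreducible) extensions or establish the property directly through the finite reductions indicated above.
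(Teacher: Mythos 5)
Your construction is genuinely different from the paper's, and in outline it is sound. The paper builds $k_L$ explicitly: it sends a transcendence basis $\{t_i\}$ of $k'/k$ to indeterminates $\{T_i\}$ algebraically independent over $K$, extends this to a $k$-embedding of $k'$ into an algebraic closure of $K(\{T_i\})$, sets $L:=k_LK$, proves the function-field property by showing directly that elements of $K$ transcendental over $k$ stay transcendental over $k_L$, obtains the uniqueness isomorphism from an explicit formula whose well-definedness is the equivalence \eqref{Equation_Well_Defined_and_Isomorphism}, and proves pure inseparability of $\tk_L/k_L$ by induction on generators with minimal-polynomial degree counts. You instead realize $L$ as the fraction field of $(K\otimes_k k')/\mathfrak{p}$, where $\mathfrak{p}$ is the nilradical, using that $k$ algebraically closed in $K$ makes $K/k$ primary, so that $K\otimes_k k'$ has a unique minimal prime; this imported fact is indeed standard (Bourbaki, \emph{Alg\`ebre} V, or EGA IV, 4.3). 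What your route buys: the uniqueness assertion falls out of the universal property of $\otimes_k$ (this is precisely the content the paper encodes by hand in \eqref{Equation_Well_Defined_and_Isomorphism}), and the assertion on $\tk_L$ follows from stability of primariness under base change (which, incidentally, can be done more cleanly than your finite d\'evissage: for any $k''/k'$, the ring $L\otimes_{k'}k''$ is a localization of a quotient of $K\otimes_k k''$, which has a unique minimal prime). What it costs: reliance on nontrivial external results where the paper is elementary and self-contained.

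That said, two of your bookkeeping claims are false as written, and both occur at load-bearing points. First, $K\otimes_k k'$ is \emph{not} a finitely generated $k'$-algebra ($K$ is not even a finitely generated $k$-algebra); it is only essentially of finite type, namely $S^{-1}(R\otimes_k k')$ for a finitely generated $k$-subalgebra $R\subseteq K$ with fraction field $K$ and $S=R\setminus\{0\}$. Second, and more seriously, by Sharp's theorem $\dim(K\otimes_k k')=\min(\trdeg_k K,\,\trdeg_k k')$, so your equalities $\trdeg_{k'}L=\dim(K\otimes_k k')=n$ and ``$n=\dim A$'' fail whenever $\trdeg_k k'<n$, in particular in the algebraic case $k'\subseteq\kbar$ — exactly the case in which the theorem is applied elsewhere in the paper (e.g.\ in Theorem \ref{Theorem_Separable}). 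The facts you need are true but require different justifications: the $x_i$ remain algebraically independent over $k_L$ because $k'[x_1,\dots,x_n]$ is reduced and hence injects into $A=(K\otimes_k k')/\mathfrak{p}$, and $L=k_LK$ is finite over $k_L(x_1,\dots,x_n)$ because $K$ is finite over $k(x_1,\dots,x_n)$; together these give $\trdeg_{k_L}L=n$. In the uniqueness step, replace the dimension comparison by: the kernel is a prime containing $\mathfrak{p}$, and if the containment were strict then, since $A$ is a localization of a finitely generated $k'$-algebra domain (where modding out a nonzero prime strictly drops the transcendence degree of the fraction field), one would get $\trdeg_{k_{\hat{L}}}\hat{L}<n$, a contradiction. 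Finally, the expression $1\otimes c-c\otimes 1$ is not meaningful for $c\in k'\setminus k$; the correct observation in the purely inseparable case is that the image of $c$ in $L$ has its $p^m$-th power in $k$, so $L/K$ is purely inseparable, and then any element of $L$ separable over $k_L$ has a $p$-power lying in $K$ and algebraic over $k$, hence in $k$, forcing the element into $k_L$. With these repairs your argument is complete.
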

	
	\begin{proof}
		Let $n=\trdeg\left(K/k\right)$ and fix a transcendence basis $x_1,\cdots, x_n$ of $K$ over $k$.
		
		Let $\{t_i\}_{i \in I}$ be a transcendence basis of $k'$ over $k$ and choose  elements $\{T_i\}_{i \in I}$ that are algebraically independent over $K$. Mapping $t_i\mapsto T_i$ gives a $k$-morphism from $k'$ to some algebraic closure of $K(\{ T_i \}_{i \in I})$. We denote its image by $k_L$ and we define $L:=k_LK$.

		Let $t \in K\setminus k$ be transcendental over $k$, and suppose that it is algebraic over $k_L$. Then it is algebraic also over $k(\{T_i\}_{i \in I})$ because the extension $k_L/k(\{T_i\}_{i \in I})$ is algebraic. Therefore, we have without loss of generality.\ (and up to clearing the denominators) a relation of the form
		$$
		f_r t^r + \cdots + f_0 = 0
		$$
		with $f_i \in k[T_1,\cdots,T_m]$, not all zero. 
		Some of the $f_i$ must be nonconstant, else $t$ would be algebraic over $k$.  Hence, we have a non-trivial algebraic relation over $K$ of the elements $T_1, \cdots, T_m$, contradicting the algebraic independence of the $T_i$'s. Therefore, we have $k_L \cap K = k$ 
		and $x_1,\ldots, x_n$ are algebraically independent over $k_L$. 
		Now, $k_L$ contains $k$ and we have 
		$$
		[L : k_L(x_1,\ldots, x_n)] \leq [K : k(x_1,\cdots,x_n)] < \infty.
		$$
		Hence, $L/k_L$ is a function field.
		
		We now prove the last assertion. Setting $\theta := \lambda^{-1} \circ \hat{\lambda}$, we prove that we obtain a $K$-isomorphism $\rho: \hat{L} \to L$ by defining
		$$        \rho\left( \frac{\sum_{i=1}^n \alpha_i \beta_i}{\sum_{j=1}^m \gamma_j \delta_j} \right) = \frac{\sum_{i=1}^n \alpha_i \theta(\beta_i)}{\sum_{j=1}^m \gamma_j \theta(\delta_j)}\qquad \text{ where $\alpha_i, \gamma_j \in K$ and $\beta_i, \delta_j \in k_{\hat{L}}$ and $\sum_{j=1}^m \gamma_j \delta_j\neq 0$}.$$
		We claim that for elements $A_i\in K$ and $B_i\in k_{\hat{L}}$ we have 
		\begin{equation}\label{Equation_Well_Defined_and_Isomorphism}
			\sum_{i=1}^n A_i B_i = 0 \text{ if and only if } \sum_{i=1}^n A_i \theta(B_i) = 0\,.
		\end{equation}
		Then the formula for $\rho$ produces an element in $L$ and different expressions for an element of $\hat{L}$ lead to the same image and the map $\rho$ is injective. We immediately see that $\rho$ is a $K$-morphism, and it is surjective because $\theta$ is invertible.
		
		Notice that \eqref{Equation_Well_Defined_and_Isomorphism} only involves a finite number of elements and so we may assume that $k_{\hat{L}}$ and $k_L$ are finitely generated over $k$.
		
		Let $k'/k$ have transcendence basis $t_1,\cdots,t_m$ and that is generated over $k(t_1,\cdots,t_m)$ by some elements $\alpha_i$. Then by construction the elements $T_i=\lambda^{-1}(t_i)$ are a transcendence basis of $k_L$ (and they are algebraically independent over $K$) which generate $k_L$ together with the elements $\lambda^{-1}(\alpha_i)$. The correspondence $t_i\mapsto T_i$ maps the minimal polynomial of $\alpha$ to the one of $\lambda^{-1}(\alpha_i)$.
		The analogous statement holds for $k_{\hat{L}}$, and we deduce that \eqref{Equation_Well_Defined_and_Isomorphism} holds. \bigskip
		
		If $k_L/k$ is purely transcendental with transcendental basis $\{T_i\}_{i\in I}$, by Lemma \ref{Lemma_Algebraically_Closed_Preserved_After_Adding_Variables} the field $k_L = k(T_i)_{i\in I}$ is algebraically closed in $L=K(T_i)_{i\in I}$. So the constant field of $L$ is $k_L$.
		In general, $k_L/k$ will have an intermediate extension $k_L'/k$ that is purely transcendental. By applying what is above, we may replace $K/k$ by $k_L'K/k_L'$ and are left to deal with the case where $k_L/k$ is algebraic. Since $k_LK/k_L$ is a function field, the extension $\tk_L/k_L$ is finite. We are left to prove that $\tk_L/k_L$ is purely inseparable.
		
		So write $\tk_L = k(\gamma_1,\cdots,\gamma_r)$ for some $\gamma_1, \cdots, \gamma_r$ algebraic over $k$. We prove the result by induction on $r$. The result holds trivially for $r = 0$. Now, suppose the result holds for some $r - 1 \geq 0$ and let $k_1 = k(\gamma_1, \cdots, \gamma_{r-1})$ and $L_1 := K k_1 = K(\gamma_1, \cdots, \gamma_{r-1})$. Let $k_2$ be the algebraic closure of $k_1$ in $L_1$. 
		To conclude, it suffices to prove that $\tk_L$ is purely inseparable over $k_2$.
		Since $\gamma_r$ is algebraic over $k$, the same is true for the coefficients of the minimal polynomial $\min(\gamma_r, L_1)$ of $\gamma_r$ over $L_1$. So $\min(\gamma_r,L_1)\in k_2[X]$ and hence $\min(\gamma_r,L_1)=\min(\gamma_r,k_2)$, so we have
		$$
		[L_1(\gamma_r):k_2] = [L_1(\gamma_r):L_1][L_1:k_2] = [k_2(\gamma_r):k_2][L_1:k_2]$$
		and we deduce $[L_1:k_2] = [L_1(\gamma_r):k_2(\gamma_r)]$.
		
		If $c \in \tk_L$ and $p:=\ch(k)$, there is an integer $u \geq 0$ (taking $p^u=1$ if $p=0$) for which the element $c^{p^u}$ is separable over $k_2(\gamma_r)$. By separability, we have  $k_2(\gamma_r,c^{p^u})=k_2(\delta)$ for some primitive element $\delta$. We can write
		$$
		[L_1(\gamma_r):k_2(\gamma_r,c^{p^u})] = [L_1(\delta):k_2(\delta)] = [L_1 : k_2]=[L_1(\gamma_r) : k_2(\gamma_r)],
		$$
		where the second inequality is because a separable extension is linearly disjoint from an inseparable extension. Hence, $c^{p^u} \in k_2(\gamma_r)$, so $c$ is purely inseparable over $k_L=k_2(\gamma_r)$.
	\end{proof}
	
	
	\section{Divisibility modulo constants}\label{sec:div}
	
	
	\subsection{Divisibility parameter modulo constants}
	
	Let $K/k$ be a function field and let $\ell$ be a prime number different from $\ch(k)$. We will see in Lemma \ref{up-to-constants} that for any $\alpha\in K^\times\setminus \tk^\times$ there exists a maximal integer $D\geq 0$ such that $\alpha\in \tk^\times K^{\times \ell^D}$, which we call \emph{$\ell$-divisibility parameter modulo  constants over $K$}. We say that $\alpha \in K^\times$ is \emph{$\ell$-indivisible modulo constants} if $D=0$, which means  $\alpha\notin \tk^\times K^{\times \ell}$.
	
	\begin{lem}\label{up-to-constants}
		Let $\alpha\in K^\times \setminus \tk^\times$. There is an integer $D \geq 0$ which is the largest integer $n$ for which $\alpha\in \tk^\times K^{\times \ell^n}$.
		Moreover, if $\alpha=c \beta^{\ell^n}$ for some $c\in \tk^\times$, for some $\beta\in K^\times$, and for some integer $n \geq 0$, we have $D=n$ if and only if $\beta$ is $\ell$-indivisible modulo constants.
		Consequently, for all but finitely many primes $\ell$, $\alpha$ is $\ell$-indivisible modulo constants.
	\end{lem}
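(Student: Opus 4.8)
The plan is to deduce all three assertions from Lemma \ref{Lemma_Prime_Divisors}(iii) together with the fact that $\tk$ is algebraically closed in $K$. For the existence of $D$, I would first observe that, since $\alpha\neq 0$, membership $\alpha\in \tk K^{\times m}$ is equivalent to $\alpha\in \tk^\times K^{\times m}$, and that the set $\{m\geq 0 : \alpha\in \tk^\times K^{\times \ell^m}\}$ is nonempty (it contains $0$) and downward closed, because $K^{\times \ell^m}\subseteq K^{\times \ell^{m-1}}$. By Lemma \ref{Lemma_Prime_Divisors}(iii) there are only finitely many positive integers $n$ with $\alpha\in \tk K^{\times n}$; applying this to the values $n=\ell^m$ shows that only finitely many $m$ lie in the set, so it has a largest element, which is $D$.

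For the characterization I first note that $\beta\notin \tk^\times$: otherwise $\alpha=c\beta^{\ell^n}\in \tk^\times$, contrary to hypothesis, so that $\ell$-indivisibility modulo constants is meaningful for $\beta$. The implication ``$\beta$ not indivisible $\Rightarrow D>n$'' is immediate: writing $\beta=c'\gamma^\ell$ with $c'\in\tk^\times$ gives $\alpha=c(c')^{\ell^n}\gamma^{\ell^{n+1}}\in \tk^\times K^{\times \ell^{n+1}}$, whence $D\geq n+1$. For the converse, assuming $\beta$ is $\ell$-indivisible modulo constants, the relation $\alpha=c\beta^{\ell^n}$ gives $D\geq n$ at once, and it remains only to rule out $\alpha\in \tk^\times K^{\times \ell^{n+1}}$; together with downward-closedness this will force $D=n$.

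This last exclusion is the step I expect to be the main obstacle, in that it is where the hypothesis on $\tk$ is genuinely used. Supposing $\alpha=d\delta^{\ell^{n+1}}$ with $d\in\tk^\times$ and $\delta\in K^\times$, I would compare with $\alpha=c\beta^{\ell^n}$ to obtain $(\beta/\delta^\ell)^{\ell^n}=d/c\in\tk^\times$. The element $u:=\beta/\delta^\ell\in K^\times$ then satisfies the polynomial $x^{\ell^n}-d/c$ with coefficients in $\tk$, so $u$ is algebraic over $\tk$; since $\tk$ is algebraically closed in $K$, we get $u\in\tk^\times$, and hence $\beta=u\delta^\ell\in \tk^\times K^{\times \ell}$, contradicting the indivisibility of $\beta$. (When $n=0$ this is the degenerate case $u=d/c$.) Thus $\alpha\notin \tk^\times K^{\times \ell^{n+1}}$ and $D=n$.

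Finally, the ``consequently'' clause follows once more from Lemma \ref{Lemma_Prime_Divisors}(iii): if $\alpha$ fails to be $\ell$-indivisible modulo constants, that is $D\geq 1$, then $\alpha\in \tk^\times K^{\times \ell}\subseteq \tk K^{\times \ell}$, so the prime $\ell$ belongs to the finite set of integers $n$ with $\alpha\in \tk K^{\times n}$. Only finitely many primes can lie in this finite set, so $\alpha$ is $\ell$-indivisible modulo constants for all but finitely many $\ell$.
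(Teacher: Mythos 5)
Your proposal is correct and takes essentially the same approach as the paper: existence of $D$ and the final assertion come from Lemma \ref{Lemma_Prime_Divisors} (the paper invokes part (ii) directly, you invoke its packaged consequence (iii)), and the characterization of $D=n$ rests on the same key step in both proofs, namely that an element of $K^\times$ whose $\ell$-power is a constant must itself lie in $\tk^\times$ since $\tk$ is algebraically closed in $K$. Your contrapositive phrasing of the second direction is only a cosmetic difference from the paper's argument.
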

	\begin{proof}
		There is $v \in \Prime(K/k)$ such that $v(\alpha) \neq 0$ (see Lemma \ref{Lemma_Prime_Divisors} (ii)). 
		This shows that $D$ is well-defined and it also proves the last assertion. If $\beta$ is not $\ell$-indivisible modulo constants, we can write $\beta=c_1\beta_1^\ell$ for some $c_1\in \tk^\times$ and for some $\beta_1\in K^\times$. Then $\alpha=(cc_1)\beta_1^{\ell^{n+1}}\in \tk^\times K^{\times \ell^{n+1}}$, implying that $n<D$. Conversely, suppose that $n<D$ and write $\alpha=c_2\beta_2^{\ell^D}$ for some $c_2\in \tk^\times$ and for some $\beta_2\in K^\times$. We can write $\beta^{\ell^n}=(c_2c_1^{-1})\beta_2^{\ell^D}$ hence there is $\gamma\in \kbar$ such that $\beta = \gamma \beta_2^{\ell^{D-n}}$. Since $\gamma \in K \cap \overline{k} = \tk$ we deduce that $\beta$ is not $\ell$-indivisible modulo constants.
	\end{proof}
	
	\begin{lem}\label{same-D}
		Let $\alpha \in K^\times$, and suppose that $\zeta_\ell\in K$. For any $M\geq 1$ not divisible by $\ch(k)$ the $\ell$-divisibility parameter $D$ of $\alpha$ over $K/\tk$ is the same over $K(\zeta_{M})/k(\zeta_{M})$.
		If $L/k_L$ is a finite constant extension of $K$, then the $\ell$-divisibility parameter $D$ of $\alpha$ over $K$ is the same over $L$.
	\end{lem}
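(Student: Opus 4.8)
The plan is to reduce both assertions to a single geometric reformulation of the divisibility parameter. Throughout, recall that $\tk=K\cap\kbar$, so an element $\alpha\in K^\times\setminus\tk^\times$ satisfies $\alpha\notin\tk_L^\times$ and $\alpha\notin\tk(\zeta_M)^\times$ as well (since $\tk_L,\tk(\zeta_M)\subseteq\kbar$); hence all three parameters are well defined by Lemma~\ref{up-to-constants}, and the case $\alpha\in\tk^\times$ is trivial. Moreover both extensions in play are \emph{finite constant} extensions of $K/k$: indeed $K(\zeta_M)=K\,k(\zeta_M)$ with $k(\zeta_M)/k$ finite, and by Proposition~\ref{Proposition_Constant_Field_Root_of_Unity} its constant field is $\tk(\zeta_M)$. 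It therefore suffices to prove the statement for an arbitrary finite constant extension $L/k_L$ with constant field $\tk_L\subseteq\kbar$, the first assertion being the instance $L=K(\zeta_M)$.

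The key claim, valid because $\zeta_\ell\in K$, is that for every $D\geq 0$ one has
$$\alpha\in\tk^\times K^{\times\ell^D}\iff \alpha\in(K\kbar)^{\times\ell^D}\,.\qquad(\star)$$
The forward direction is immediate: if $\alpha=c\beta^{\ell^D}$ with $c\in\tk^\times$ and $\beta\in K^\times$, then fixing an $\ell^D$-th root $c_0\in\kbar$ of $c$ gives $\alpha=(c_0\beta)^{\ell^D}\in(K\kbar)^{\times\ell^D}$. For the converse I would induct on $D$, the case $D=1$ being exactly the equivalence of $(i)$ and $(iv)$ in Lemma~\ref{geom-equiv}, which applies since $\zeta_\ell\in K$. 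For the inductive step, write $\alpha=h^{\ell^D}$ with $h\in K\kbar$; then $\alpha=(h^{\ell^{D-1}})^\ell\in(K\kbar)^{\times\ell}$, so Lemma~\ref{geom-equiv} yields $\alpha=c_1\beta_1^\ell$ with $c_1\in\tk^\times$ and $\beta_1\in K^\times$ (and $\beta_1\notin\tk^\times$, else $\alpha\in\tk^\times$). The two $\ell$-th roots $\beta_1$ and $c_1^{-1/\ell}h^{\ell^{D-1}}$ of $c_1^{-1}h^{\ell^D}$ differ by an $\ell$-th root of unity $\xi\in\kbar$, so $\beta_1=\eta\,h^{\ell^{D-1}}$ with $\eta:=\xi\,c_1^{-1/\ell}\in\kbar$; choosing an $\ell^{D-1}$-th root $\eta_0\in\kbar$ of $\eta$ gives $\beta_1=(\eta_0 h)^{\ell^{D-1}}\in(K\kbar)^{\times\ell^{D-1}}$. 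By the inductive hypothesis applied to $\beta_1$ we get $\beta_1=c_2\beta_2^{\ell^{D-1}}$, whence $\alpha=c_1c_2^\ell\,\beta_2^{\ell^D}\in\tk^\times K^{\times\ell^D}$.

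Granting $(\star)$, the $\ell$-divisibility parameter of $\alpha$ over $K$ is precisely the largest $D$ with $\alpha\in(K\kbar)^{\times\ell^D}$. The proof of $(\star)$ used only that $\zeta_\ell$ lies in the field and that one works modulo its own constant field, so it applies verbatim to $L$ (legitimate since $\zeta_\ell\in K\subseteq L$ and the constant field of $L$ is $\tk_L\subseteq\kbar$): the parameter over $L$ is the largest $D$ with $\alpha\in(L\kbar)^{\times\ell^D}$. Since $L=Kk'$ with $k'\subseteq\kbar$, we have $L\kbar=K\kbar$, so the two parameters coincide; taking $L=K(\zeta_M)$ (where $L\kbar=K(\zeta_M)\kbar=K\kbar$ because $\zeta_M\in\kbar$) gives the first assertion.

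The only genuine work is the inductive step of $(\star)$, and I expect it to be the main obstacle: the factorisation $\alpha=c_1\beta_1^\ell$ supplied by Lemma~\ref{geom-equiv} produces an $\ell$-th root $\beta_1\in K^\times$ that is related to $h^{\ell^{D-1}}$ only up to an $\ell$-th-root-of-unity and an $\ell$-th-root-of-constant ambiguity, and one must absorb both of these into $\kbar$ (using that $\kbar$ is algebraically closed) in order to certify that $\beta_1$ is itself an $\ell^{D-1}$-th power in $K\kbar$, which is exactly what feeds the induction. Once this bookkeeping is in place, the reduction to finite constant extensions and the identity $L\kbar=K\kbar$ make the remainder of the argument purely formal.
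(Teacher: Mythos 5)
Your proposal is correct and takes essentially the same route as the paper: both arguments rest on Lemma \ref{geom-equiv} (hence the hypothesis $\zeta_\ell\in K$) to identify the $\ell$-divisibility parameter modulo constants with $\ell$-power divisibility in $K\kbar$ (absorbing constants and roots of unity into $\kbar$), and then conclude from the identity $L\kbar=K\kbar$ for constant extensions $L$. The only difference is packaging: the paper runs a one-step contradiction (writing $\alpha=c\beta^{\ell^D}$ with $\beta$ $\ell$-indivisible, so that $K(\sqrt[\ell]{\beta})\not\subseteq K\kbar$), whereas you prove the full equivalence $(\star)$ by induction on $D$ — the same mechanism in a more systematic form.
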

	\begin{proof}
		Write $\alpha = c \beta^{\ell^{D}}$ with 
		$c \in \tk^\times$, $\beta\in K^\times$. Since $\beta$ is $\ell$-indivisible modulo constants (and a constant extension of prime degree is geometric), the extension $K(\sqrt[\ell]{\beta})$ is not contained in $K\kbar$.
		We deduce that the extension $L(\zeta_{\ell^{D+1}}, \sqrt[\ell^{D+1}]{\alpha})/K$ is not contained in $K\kbar$, which makes it impossible to have $\alpha = c_L g^{\ell^{D+1}}$ with
		$c_L \in \tk_L^\times$, $g \in L^\times$.
	\end{proof}
	
	\begin{exa}\label{exa-kt}
		Let $K=k(t)$, let $s$ be a positive integer and let $\alpha=c \prod_{i=1}^s P_i^{n_i}$ where $c\in k^\times$, $P_i$ are distinct monic irreducible polynomials in $k[t]$, and $n_i\in \mathbb Z\setminus \{0\}$. Then the  divisibility parameter of $\alpha$ over $K$ is $D_K = \min\{v_\ell(n_i), 1 \leq i \leq s \}$ because $k[t]$ is a unique factorization domain.
		
		Suppose that $c=1$ and let $n$ be a non-zero integer pairwise coprime with each $n_i$.
		Consider the function field $L/k$ such that $L=K(y)$ for some choice of $y$ in an algebraic closure $\bar{K}$ of $K$ such that $y^n = \alpha$ (the function field $L/k$ is called \textit{superelliptic} if $n$ and all $n_i$ are positive).
		
		If $\ell $ divides $n$, then the $\ell$-divisibility parameters of $\alpha$ over $L$ is $\ord_\ell(n)$. If $\ell$ divides the greatest common divisor of the $n_i$'s, then the $\ell$-divisibility parameter of $\alpha$ over $L$ (equivalently, over $K$) is $\min\{v_\ell(n_i), 1 \leq i \leq s \}$.
	\end{exa}
	
	\begin{rem}   
		Suppose that $\alpha\in K^\times\setminus \tk$, and let $n \geq 1$.
		If $L/K$ is a constant extension containing $\zeta_{\ell^n}$, then we have 
		$$[L(\sqrt[\ell^n]{\alpha}):L]=[L(\sqrt[\ell^D]{\alpha}):L]\cdot \ell^{n-D} \qquad \text{for } n>D\,.$$
		Indeed, by Lemma \ref{same-D} the $\ell$-divisibility parameter $D$ over $L$ is the same as over $K$ and hence
		$\alpha\notin L^{\times \ell^{D+1}}$. We conclude by Kummer theory.
	\end{rem}
	
	
	\subsection{Independence modulo constants}
	
	Let $K/k$ be a function field and let $n \geq 1$ be an integer coprime with $\ch(K)$. If $G$ is a subgroup of $K^\times$, then we write $K(\sqrt[n]{G})$ for the extension of $K$ generated by all the $\sqrt[n]{\alpha}$, with $\alpha \in G$. This extension is uniquely determined by $G$, as a subfield of a fixed algebraic closure $K$. In particular, if $\{\alpha_1,\cdots,\alpha_r\}$ is a minimal subset of $K^\times$ of generators for $G$, then by \cite[Corollary 6.1.16]{lang_2002} we have
	$$
	K(\sqrt[n]{G}) = K(\sqrt[n]{\alpha_1},\cdots,\sqrt[n]{\alpha_r}) = K(\sqrt[n]{\alpha_1}) \cdot \cdots \cdot K(\sqrt[n]{\alpha_r})
	$$
	and we have a canonical group isomorphism
	$$
	\mathrm{Gal}(K(\sqrt[n]{G})/K) \xrightarrow{\simeq} \mathrm{Gal}(K(\sqrt[n]{\alpha_1})/K) \times \cdots \times \mathrm{Gal}(K\sqrt[n]{\alpha_r})/K).
	$$
	We now suppose that $n$ is the power of some prime number $\ell$ and hence we will suppose
	that $\zeta_\ell\in K$ (as we are interested in the Kummer extensions, we may replace $K$ by $K(\zeta_\ell)$).
	
	\begin{defi}
		We say elements $\alpha_1,\ldots, \alpha_r\in K^\times$ are \emph{$\ell$-independent modulo constants} if for each vector $(x_1,\cdots,x_r) \in \Z^r$ not in $(\ell \Z)^r$, the element $\prod_{i=1}^r \alpha_i^{x_i}$ is $\ell$-indivisible modulo constants.
	\end{defi}
	
	\begin{lem}
		Let $\alpha_1,\cdots,\alpha_r \in K^\times$ be $\ell$-independent modulo constants. The group generated by the images of $\alpha_1,\cdots,\alpha_r$ in the quotient group $\frac{K^\times}{\tk^\times}$ has rank $r$. In particular, the $\alpha_i$'s generate a subgroup $G$ of $K^\times$ that is torsion-free and of rank $r$. 
	\end{lem}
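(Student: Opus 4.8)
The plan is to reduce both assertions to a single statement about the \emph{relation lattice}
$$\Lambda := \{(x_1,\ldots,x_r)\in\mathbb Z^r : \textstyle\prod_{i=1}^r \alpha_i^{x_i}\in\tk^\times\}\,,$$
which is a subgroup of $\mathbb Z^r$. Both claims follow at once if I show $\Lambda=\{0\}$. Indeed, the homomorphism $\mathbb Z^r\to K^\times/\tk^\times$ sending $e_i$ to the class of $\alpha_i$ has kernel exactly $\Lambda$, so its image (the group generated by the classes of the $\alpha_i$) is isomorphic to $\mathbb Z^r/\Lambda$; if $\Lambda=\{0\}$ this image is isomorphic to $\mathbb Z^r$ and has rank $r$. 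Similarly, the kernel of $\mathbb Z^r\to K^\times$, $e_i\mapsto\alpha_i$, consists of the relations $\prod\alpha_i^{x_i}=1$ and is therefore contained in $\Lambda$ (as $1\in\tk^\times$); if $\Lambda=\{0\}$ this kernel is trivial, so the subgroup $G$ generated by the $\alpha_i$ is isomorphic to $\mathbb Z^r$, hence free and in particular torsion-free of rank $r$.

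Then I would prove $\Lambda=\{0\}$ by infinite descent on $\ell$-divisibility. Suppose $(x_1,\ldots,x_r)\in\Lambda$, say $\prod\alpha_i^{x_i}=c$ with $c\in\tk^\times$. Since $c\in\tk^\times\subseteq\tk^\times K^{\times\ell}$, the element $\prod\alpha_i^{x_i}$ is \emph{not} $\ell$-indivisible modulo constants; by the hypothesis of $\ell$-independence modulo constants, this forces $(x_1,\ldots,x_r)\in(\ell\mathbb Z)^r$. Writing $x_i=\ell y_i$ and putting $\gamma:=\prod\alpha_i^{y_i}\in K^\times$, we obtain $\gamma^\ell=c\in\tk^\times$, so $\gamma$ is a root of $X^\ell-c$ and hence algebraic over $k$. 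As $\gamma\in K$ and $\tk=\kbar\cap K$ is the constant field, we conclude $\gamma\in\tk^\times$, i.e.\ $(y_1,\ldots,y_r)\in\Lambda$ as well. Thus $\Lambda\subseteq(\ell\mathbb Z)^r$ and division by $\ell$ maps $\Lambda$ into itself, so every $x\in\Lambda$ lies in $\ell^n\mathbb Z^r$ for all $n\geq1$ and must vanish; hence $\Lambda=\{0\}$.

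The step I expect to carry the real content is the descent argument: the crux is the interaction between the combinatorial hypothesis (any exponent vector outside $(\ell\mathbb Z)^r$ yields an $\ell$-indivisible element) and the field-theoretic fact that an $\ell$-th root lying in $K$ of a nonzero constant is again a constant. It is exactly this that allows one to pass from a relation with exponent vector $(x_i)$ to a genuine relation with exponent vector $(x_i/\ell)$ inside $\Lambda$, and thereby run the descent. Everything else is bookkeeping with the two homomorphisms out of $\mathbb Z^r$ described above.
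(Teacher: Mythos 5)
Your proof is correct and is essentially the paper's own argument: the paper likewise reduces both claims to the nonexistence of a nontrivial relation $\prod_{i=1}^r \alpha_i^{x_i} \in \tk^\times$, uses that a constant lies in $\tk^\times K^{\times \ell}$ (hence is not $\ell$-indivisible modulo constants) to force $\ell \mid x_i$ for all $i$ via $\ell$-independence, and implicitly uses the fact that a root in $K$ of a constant is again a constant in order to strip the $\ell$-part from the exponent vector. Your relation lattice $\Lambda$ and the step-by-step descent simply make explicit what the paper compresses into ``up to replacing $\eta$, we may suppose without loss of generality that not all $x_i$ are divisible by $\ell$.''
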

	\begin{proof}
		The latter assertion is readily seen to be a consequence of the former.
		Suppose that $\eta := \prod_{i=1}^r \alpha_i^{x_i}$ for some non-zero vector $(x_1,\cdots,x_r) \in \Z^r$. Up to replacing $\eta$, we may suppose without loss of generality that not all $x_i$ are divisible by $\ell$. Since $\eta$ is not $\ell$-indivisible modulo constants, this contradicts the assumption of $\ell$-independence modulo constants.
	\end{proof}
	
	The function fields appearing in the next theorem are implicitly considered as function fields over the same field $k$.
	
	\begin{thm}\label{Theorem_Equivalent_ell_Independent}
		Let $G$ be a torsion-free subgroup of $K^\times$ of rank $r > 0$, and write $G=\langle \alpha_1, \ldots, \alpha_r\rangle$ with $\alpha_i\in K^\times$. The following are equivalent:
		\begin{itemize}
			\item[(i)] The elements $\alpha_1,\ldots, \alpha_r$ are $\ell$-independent modulo constants.
			\item[(ii)] The Kummer extension $K(\sqrt[\ell]{G})/K$ has degree $\ell^r$ and it is geometric.
			\item[(iii)] For every $m\geq n\geq 1$ the Kummer extension $K(\zeta_{\ell^m}, \sqrt[\ell^n]{G})/K(\zeta_{\ell^m})$ has degree $\ell^{rn}$ and it is geometric.
			\item[(iv)] For every $n\geq 1$ the Kummer extension $K(\zeta_{\ell^\infty}, \sqrt[\ell^n]{G})/K(\zeta_{\ell^\infty})$ has degree $\ell^{rn}$ and it is geometric.
			\item[(v)] For every $n\geq 1$ the Kummer extension $\kbar K(\sqrt[\ell^n]{G})/\kbar K$ has degree $\ell^{rn}$ and it is geometric.
		\end{itemize}
		(As the given Kummer extensions have exponent dividing $\ell^n$ and rank at most $r$, their degree being $\ell^{rn}$ implies that their Galois group is $(\mathbb Z/\ell^{n} \mathbb Z)^r$.)
	\end{thm}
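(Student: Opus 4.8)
The plan is to prove the five conditions equivalent by routing everything through condition (v), which is cleanest because over the algebraically closed constant field geometricity is automatic, and to isolate the genuinely new input in the equivalence $(i)\Leftrightarrow(ii)$.

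First I would establish $(i)\Leftrightarrow(ii)$ directly. Since $\zeta_\ell\in K$, Lemma \ref{geom-equiv} identifies, for any $\beta\in K^\times\setminus\tk^\times$, the condition ``$\beta$ is $\ell$-indivisible modulo constants'' (that is, $\beta\notin\tk^\times K^{\times\ell}$) with ``$K(\sqrt[\ell]{\beta})/K$ is non-constant'', and a non-constant separable extension of prime degree is geometric by Remark \ref{Remark_Degree_Dividing_ell_Constant}. For $(i)\Rightarrow(ii)$, $\ell$-independence modulo constants forces the classes of the $\alpha_i$ to be $\mathbb{F}_\ell$-independent in $K^\times/K^{\times\ell}$ (any relation would exhibit some $\prod_i\alpha_i^{x_i}\in K^{\times\ell}\subseteq\tk^\times K^{\times\ell}$ with $(x_i)\notin(\ell\Z)^r$), whence $[K(\sqrt[\ell]{G}):K]=\ell^r$; were the extension non-geometric, Lemma \ref{Lemma_Separability_from_Function_Fields_Extensions_to_extensions_of_fields_of_constants} would produce a constant degree-$\ell$ subextension, necessarily of the form $K(\sqrt[\ell]{\prod_i\alpha_i^{x_i}})$ with $(x_i)\notin(\ell\Z)^r$, contradicting indivisibility through Lemma \ref{geom-equiv}. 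For $(ii)\Rightarrow(i)$, each $K(\sqrt[\ell]{\prod_i\alpha_i^{x_i}})/K$ with $(x_i)\notin(\ell\Z)^r$ is a degree-$\ell$ subextension of the geometric extension $K(\sqrt[\ell]{G})/K$, hence itself geometric, hence non-constant, giving the required indivisibility.

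Next I would reduce the statements over the various base fields to the single condition that $[\kbar K(\sqrt[\ell^n]{G}):\kbar K]=\ell^{rn}$ for all $n$, that is, to (v). The point is that $K(\zeta_{\ell^m})$, $K(\zeta_{\ell^\infty})$ and $\kbar K$ are all constant extensions of $K$, with constant fields $\tk(\zeta_{\ell^m})$, $\tk(\zeta_{\ell^\infty})$ and $\kbar$ by Proposition \ref{Proposition_Constant_Field_Root_of_Unity}, and $\kbar K$ sits on top of all of them. For any intermediate constant extension $B$ of $K$ inside $\kbar K$ and $E:=B(\sqrt[\ell^n]{G})$, which is a Kummer (hence Galois) extension of $B$, one has $E\kbar=\kbar K(\sqrt[\ell^n]{G})$ and
\[
\ell^{rn}=[E\kbar:\kbar K]=[E:E\cap\kbar K]\leq[E:B]\leq\ell^{rn},
\]
so assuming (v) forces $[E:B]=\ell^{rn}$ and $E\cap\kbar K=B$, whence the constant field satisfies $E\cap\kbar\subseteq B\cap\kbar=\tk_B$ and $E/B$ is geometric. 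Applying this with $B=K$ and $n=1$, with $B=K(\zeta_{\ell^m})$, and with $B=K(\zeta_{\ell^\infty})$ yields $(v)\Rightarrow(ii)$, $(v)\Rightarrow(iii)$ and $(v)\Rightarrow(iv)$. Conversely, if (iii) or (iv) holds then for $E=B(\sqrt[\ell^n]{G})$ the field $E\cap\kbar K$ is simultaneously a subextension of the geometric $E/B$ and of the constant $\kbar K/B$, hence trivial by Remark \ref{constant-and-geometric}, so $[\kbar K(\sqrt[\ell^n]{G}):\kbar K]=[E:B]=\ell^{rn}$, giving (v). Combined with $(i)\Leftrightarrow(ii)$, these reduce the whole theorem to proving $(i)\Rightarrow(v)$.

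Finally I would close the loop with $(i)\Rightarrow(v)$, which I expect to be the main obstacle. The first half is a transfer: under (i) each $\beta=\prod_i\alpha_i^{x_i}$ with $(x_i)\notin(\ell\Z)^r$ lies in $K^\times\setminus\tk^\times$ (otherwise $\beta\in\tk^\times K^{\times\ell}$ trivially, against indivisibility) and has $\ell$-divisibility parameter $0$ over $K$, so by Lemma \ref{same-D}, applied to a finite constant subextension carrying any hypothetical witness, it still has parameter $0$ over $\kbar K$; since $\kbar^\times\subseteq(\kbar K)^{\times\ell}$ this reads $\beta\notin(\kbar K)^{\times\ell}$, which is exactly the $n=1$ case of (v). The second half is the exponent bootstrap over $\kbar K$, which is clean precisely because $\kbar$ is $\ell$-divisible and contains all $\ell$-power roots of unity: I would prove by induction on $j$ that $\prod_i\alpha_i^{x_i}\in(\kbar K)^{\times\ell^j}$ implies $\ell^j\mid x_i$ for all $i$. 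Granting the statement for $\ell^j$, write $x_i=\ell^j y_i$; then from $(\prod_i\alpha_i^{y_i})^{\ell^j}\in(\kbar K)^{\times\ell^{j+1}}$ one extracts $\prod_i\alpha_i^{y_i}=\zeta\, u^\ell$ with $\zeta$ an $\ell^j$-th root of unity in $\kbar\subseteq(\kbar K)^{\times\ell}$, so $\prod_i\alpha_i^{y_i}\in(\kbar K)^{\times\ell}$ and the base case forces $\ell\mid y_i$, i.e. $\ell^{j+1}\mid x_i$. This gives $[\kbar K(\sqrt[\ell^n]{G}):\kbar K]=\ell^{rn}$ for every $n$, establishing (v) and hence all equivalences. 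The closing parenthetical assertion is then immediate: each of these Kummer extensions has exponent dividing $\ell^n$ and its Galois group injects into $(\Z/\ell^n\Z)^r$, so degree $\ell^{rn}$ forces the group to be all of $(\Z/\ell^n\Z)^r$.
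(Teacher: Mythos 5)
Your proof is correct, and it reorganizes the equivalences in a genuinely different way from the paper. The paper proves (i)$\Leftrightarrow$(ii) via Lemma \ref{geom-equiv} (essentially as you do), but then treats the cyclotomic statements by a chain at finite level: (ii)$\Rightarrow$(iii) by transporting degree and geometricity across $K(\zeta_{\ell^m})/K$ using Remark \ref{Remark_Degree_Dividing_ell_Constant} and Lemma \ref{lem_geometric}, (iii)$\Leftrightarrow$(iv) by a direct-limit argument writing $K(\zeta_{\ell^\infty},\sqrt[\ell^n]{G})$ as a union of the finite-level fields, and (i)$\Leftrightarrow$(v) again via Lemma \ref{geom-equiv}; the passage from exponent $\ell$ to exponent $\ell^n$ is dispatched there with the words ``by Kummer theory''. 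You instead make (v) the hub: the degree identity $[E\kbar:\kbar K]=[E:E\cap\kbar K]$ for the finite Galois extension $E=B(\sqrt[\ell^n]{G})$ of each constant base $B\in\{K,\,K(\zeta_{\ell^m}),\,K(\zeta_{\ell^\infty})\}$ gives (v)$\Rightarrow$(ii),(iii),(iv) uniformly, while Remark \ref{constant-and-geometric} gives the converses; and your explicit induction (every $\ell$-power root of unity lies in $\kbar\subseteq(\kbar K)^{\times\ell}$, so it never obstructs descent of divisibility) is precisely the bootstrap the paper leaves implicit, which is a gain in rigor at that point. What your route buys is uniformity --- one mechanism for the three base fields, no limit argument, no Lemma \ref{lem_geometric} --- at the price of invoking the linear-disjointness degree formula for Galois extensions, which the paper never needs. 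Two citations in your sketch should be repaired, though both are fillable from the paper itself: in (iii)/(iv)$\Rightarrow$(v), the assertion that $E\cap\kbar K$ is a \emph{constant} extension of $B$ is not formal, since a subextension of a constant extension is constant only by Lemma \ref{algebraic} together with Remark \ref{new-rem} (exactly how the paper argues inside Lemma \ref{42}), so Remark \ref{constant-and-geometric} alone does not close that step; and in (i)$\Rightarrow$(ii), the degree-$\ell$ constant subextension of a non-geometric $K(\sqrt[\ell]{G})/K$ is produced by the argument in the proof of Lemma \ref{lem_geometric} (a degree-$\ell$ subextension of $\tk_L/\tk$ pushed up to $K$ via Lemma \ref{Lemma_Linearly_Disjoint_Algebraic_Element}), not by Lemma \ref{Lemma_Separability_from_Function_Fields_Extensions_to_extensions_of_fields_of_constants} alone, which only yields that $\tk_L/\tk$ is Galois.
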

	\begin{proof}
		\emph{(iii) $\Leftrightarrow$ (iv)}  Fix $n\geq 1$. By Kummer theory, the degrees of the given extensions do not exceed $\ell^{rn}$. The degree of $K(\zeta_{\ell^\infty}, \sqrt[\ell^n]{G})/K(\zeta_{\ell^\infty})$ divides  the one of $K(\zeta_{\ell^m}, \sqrt[\ell^n]{G})/K(\zeta_{\ell^m})$. Conversely, since the degree in $(iii)$ is constant in $m$, the two extensions $K(\zeta_{\ell^\infty})$ and $K(\zeta_{\ell^n}, \sqrt[\ell^n]{G})$ are linearly disjoint over $K(\zeta_{\ell^n})$ and hence the degree of the Kummer extension equals  $\ell^{rn}$ also over $K(\zeta_{\ell^\infty})$. Now we may suppose that the given extensions have degree $\ell^{nr}$ and we only need to show that if one of them is geometric, the other is also geometric. By Proposition \ref{Proposition_Constant_Field_Root_of_Unity}, the constant field of $K(\zeta_{\ell^m})/k$ is $\tk(\zeta_{\ell^m})$ while the one of $K(\zeta_{\ell^\infty})/k$ is 
		$\tk(\zeta_{\ell^\infty})$ by Proposition \ref{Proposition_Constant_Field_Root_of_Unity}. Moreover, we can write $K(\zeta_{\ell^\infty},\sqrt[\ell^n]{G})=\bigcup_{m=1}^\infty K(\zeta_{\ell^m}, \sqrt[\ell^n]{G})$. 
		
		Suppose that all extensions $K(\zeta_{\ell^m}, \sqrt[\ell^n]{G})/K(\zeta_{\ell^m})$ are geometric. An element  $\gamma\in K(\zeta_{\ell^\infty}, \sqrt[n]{G})$ that is algebraic over $k$ belongs to some field $K(\zeta_{\ell^m}, \sqrt[\ell^n]{G})$.
		Our assumption on this field then implies $\gamma\in \tk(\zeta_{\ell^m})$. This shows that $K(\zeta_{\ell^\infty}, \sqrt[n]{G})\cap \kbar = \tk(\zeta_{\ell^\infty})$.
		
		Now suppose that $K(\zeta_{\ell^\infty},\sqrt[n]{G})/K(\zeta_{\ell^\infty})$ is geometric. An element of $K(\zeta_{\ell^m},\sqrt[n]{G})$ that is algebraic over $k$ belongs to $\tk(\zeta_{\ell^\infty})$ and hence to $\tk(\zeta_{\ell^u})$ for some $u\geq m$.
		We conclude by proving that $K(\zeta_{\ell^m}, \sqrt[n]{G})\cap \tk(\zeta_{\ell^u})=\tk(\zeta_{\ell^m})$. If we had $\zeta_{\ell^{m+1}}$ in $K(\zeta_{\ell^m}, \sqrt[n]{G})\setminus K(\zeta_{\ell^m})$, then the degree of $K(\zeta_{\ell^m}, \sqrt[n]{G})/K(\zeta_{\ell^m})$ would not be the same as the one of $K(\zeta_{\ell^\infty}, \sqrt[n]{G})/K(\zeta_{\ell^\infty})$, contradiction.
		
		\emph{(iii) $\Rightarrow$ (ii)} This is immediate by taking $m=n=1$.
		
		\emph{(ii) $\Rightarrow$ (iii)} Since $K(\sqrt[\ell]{G})/K$ is geometric, we deduce that $K(\zeta_{\ell^m}, \sqrt[\ell]{G})/K(\zeta_{\ell^m})$ has also degree $\ell^r$. By Kummer theory, we deduce that $K(\zeta_{\ell^m}, \sqrt[\ell^n]{G})/K(\zeta_{\ell^m})$ has degree $\ell^{rn}$.
		By Lemma \ref{lem_geometric} it suffices to prove that $K(\zeta_{\ell^m}, \sqrt[\ell]{G})/K(\zeta_{\ell^m})$ is geometric. This extension is a tower of extensions of degree $\ell$ that are nonconstant and hence geometric by Remark \ref{Remark_Degree_Dividing_ell_Constant}, so we may conclude.
		
		\emph{(i) $\Leftrightarrow$ (ii)} Property $(ii)$ is equivalent to the fact that for every $\alpha\in G$ that is not an $\ell$-th power in $G$ the extension $K(\sqrt[\ell]{\alpha})/K$ is geometric of degree $\ell$ (because the Galois group of $K(\sqrt[\ell]{G})/K$ has exponent dividing $\ell$, and an extension of degree $\ell$ is geometric if and only if it is not constant). Recalling that $\zeta_\ell\in K$, by Lemma \ref{geom-equiv} the above property precisely means that every element $\alpha:=\prod_{i=1}^r \alpha_i^{x_i}$ is $\ell$-indivisible up to constants whenever the integers $x_i$'s are not all divisible by $\ell$.

		\emph{(i) $\Leftrightarrow$ (v)}. The extension $\kbar K(\sqrt[\ell^n]{G})/\kbar K$ is always geometric. Indeed, the constant field of $\kbar K(\sqrt[\ell^n]{G})$ is a finite extension of the constant field of $\kbar K$ hence it is $\kbar$. Property (i) means that any element $\alpha := \prod_{i=1}^r \alpha_i^{x_i}$ (where the integers $x_i$'s are not all divisible by $\ell$) is $\ell$-indivisible modulo constants. Property (v) (the assertion on the degree) means, by Kummer theory, that any such element is not an $\ell$-th power in $\kbar K$. We may conclude by Lemma \ref{geom-equiv} \emph{(i) $\Leftrightarrow$ (iv)}.
	\end{proof}
	
	\begin{cor}\label{cor-Kummerdeg-modulo}
		Suppose that $\alpha_1,\ldots, \alpha_r\in K^\times$ can be expressed as
		$\alpha_i= c_i \beta_i^{\ell^{D_i}}$, where $c_i\in \tk$ and where the elements $\beta_1, \ldots, \beta_r$ are 
		$\ell$-independent modulo constants over $K$. 
		For every $n\geq 1$ and for every $M$ such that $\ell^n \mid M$,  the maximal constant subextension of $K(\zeta_{M}, \sqrt[\ell^n]{G})/K$ is 
		$$L_{M,n}:=K(\zeta_{M}, \sqrt[\ell^{\min(D_1, n)}]{\alpha_1}, \ldots, \sqrt[\ell^{\min(D_r, n})]{\alpha_r})\,.$$ 
		Moreover, we have
		$$\Gal(K(\zeta_{M}, \sqrt[\ell^n]{G})/L_{M,n})\simeq \Gal(\kbar K(\sqrt[\ell^n]{G})/\kbar K)\simeq \prod_{i=1}^r \mathbb Z/\ell^{\max(n-D_i,0)} \mathbb Z\,.$$
	\end{cor}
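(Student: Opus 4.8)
The plan is to reduce everything to the geometric extension, whose Galois group is already accessible via Theorem~\ref{Theorem_Equivalent_ell_Independent}, and then to transport the answer down to $L_{M,n}$ by a standard translation-of-Galois-groups argument. Write $E := K(\zeta_M, \sqrt[\ell^n]{G})$ with $G=\langle\alpha_1,\ldots,\alpha_r\rangle$, and set $e_i := \min(D_i, n)$, so that $n - e_i = \max(n - D_i, 0)$. First I would compute $\Gal(\kbar K(\sqrt[\ell^n]{G})/\kbar K)$. Since $\zeta_M \in \kbar$ we have $\kbar E = \kbar K(\sqrt[\ell^n]{G})$, whose constant field is $\kbar$, so this extension is geometric. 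Because each $c_i \in \tk \subseteq \kbar$ is an $\ell^n$-th power in $\kbar K$, we get $\alpha_i \equiv \beta_i^{\ell^{D_i}} \pmod{(\kbar K)^{\times \ell^n}}$. Applying Theorem~\ref{Theorem_Equivalent_ell_Independent} (the equivalence with (v)) to the $\ell$-independent elements $\beta_1, \ldots, \beta_r$, the classes $\bar\beta_1, \ldots, \bar\beta_r$ generate a subgroup of $(\kbar K)^\times/(\kbar K)^{\times \ell^n}$ isomorphic to $(\mathbb Z/\ell^n\mathbb Z)^r$; hence the classes $\bar\alpha_i = \bar\beta_i^{\ell^{D_i}}$ generate $\prod_{i=1}^r \mathbb Z/\ell^{\max(n - D_i, 0)}\mathbb Z$, and Kummer theory over $\kbar K$ identifies this group with $\Gal(\kbar K(\sqrt[\ell^n]{G})/\kbar K)$. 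Call this group $H$ and set $h := |H| = \prod_i \ell^{\max(n-D_i,0)}$.

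Next I would show that $L_{M,n}/K$ is constant and lies inside $E$. Using $\alpha_i = c_i \beta_i^{\ell^{D_i}}$ and $e_i \leq D_i$, a fixed $\ell^{e_i}$-th root satisfies $\sqrt[\ell^{e_i}]{\alpha_i} = \xi \cdot \sqrt[\ell^{e_i}]{c_i} \cdot \beta_i^{\ell^{D_i - e_i}}$ for some root of unity $\xi$ of order dividing $\ell^{e_i} \mid M$. Since $\beta_i^{\ell^{D_i - e_i}} \in K$ and $\xi \in K(\zeta_M)$, adjoining $\sqrt[\ell^{e_i}]{\alpha_i}$ to $K(\zeta_M)$ is the same as adjoining the constant $\sqrt[\ell^{e_i}]{c_i}$; therefore $L_{M,n} = K(\zeta_M, \sqrt[\ell^{e_1}]{c_1}, \ldots, \sqrt[\ell^{e_r}]{c_r})$ is manifestly a constant extension of $K$. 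As $e_i \leq n$, we also have $\sqrt[\ell^{e_i}]{\alpha_i} = (\sqrt[\ell^n]{\alpha_i})^{\ell^{n - e_i}} \in E$, so $L_{M,n} \subseteq E$.

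Finally I would pin down $L_{M,n}$ as the maximal constant subextension and read off its Galois group. The key structural fact is that the constant subextensions of $E/K$ are exactly the subextensions contained in $\kbar K$: one inclusion is clear, and for the other, any subextension $F/K$ of $E/K$ contained in $K\kbar$ is finite of degree coprime to $\ch(k)$, so Remark~\ref{new-rem} and Lemma~\ref{algebraic} give $F = K(F \cap \kbar)$, which is constant. Hence $E \cap \kbar K$ is itself the maximal constant subextension. The standard translation isomorphism for the Galois extension $E/K$ and the field $\kbar K$ then yields $\Gal(\kbar E/\kbar K) \simeq \Gal(E/(E \cap \kbar K))$, so $[E : E \cap \kbar K] = h$. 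Since $L_{M,n} \subseteq E \cap \kbar K$, this forces $[E : L_{M,n}] \geq h$. Conversely, over $L_{M,n}$ each $\sqrt[\ell^n]{\alpha_i}$ is an $\ell^{n-e_i}$-th root of $\sqrt[\ell^{e_i}]{\alpha_i} \in L_{M,n}$, so $[E : L_{M,n}] \leq \prod_i \ell^{n - e_i} = h$. Thus $[E : L_{M,n}] = h$ and $L_{M,n} = E \cap \kbar K$ is the maximal constant subextension, whence $\Gal(E/L_{M,n}) \simeq \Gal(\kbar E/\kbar K) = H \simeq \prod_{i=1}^r \mathbb Z/\ell^{\max(n - D_i, 0)}\mathbb Z$.

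I expect the main obstacle to be this last step: identifying $E \cap \kbar K$ with the maximal constant subextension needs the inseparability-robust form of Lemma~\ref{algebraic} furnished by Remark~\ref{new-rem} (using that $[E:K]$ is coprime to $\ch(k)$), and the translation isomorphism has to be arranged so that it delivers both the degree $h$ and the group $H$ at once. Once the upper bound $[E:L_{M,n}] \leq h$ coming from the explicit generators is matched with the lower bound $[E : E\cap\kbar K] = h$, the two constant fields coincide and everything collapses; by contrast, the geometric computation of $H$ via Theorem~\ref{Theorem_Equivalent_ell_Independent} should be routine.
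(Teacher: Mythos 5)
Your proposal is correct and follows essentially the same route as the paper's proof: compute $\Gal(\kbar K(\sqrt[\ell^n]{G})/\kbar K)$ by applying Theorem \ref{Theorem_Equivalent_ell_Independent} to the $\beta_i$'s (the constants $c_i$ being $\ell^n$-th powers in $\kbar K$), then trap $[E:L_{M,n}]$ (writing $E:=K(\zeta_M,\sqrt[\ell^n]{G})$ as you do) between the lower bound coming from $L_{M,n}\subseteq E\cap\kbar K$ and the restriction isomorphism $\Gal(\kbar E/\kbar K)\simeq\Gal(E/E\cap\kbar K)$, and the Kummer-theoretic upper bound $\prod_{i=1}^r\ell^{\max(n-D_i,0)}$, forcing $L_{M,n}=E\cap\kbar K$. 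This is exactly the paper's degree sandwich, spelled out in more detail.

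One auxiliary claim in your third step is false as stated: a subextension of $E/K$ contained in $\kbar K$ need \emph{not} have degree coprime to $\ch(k)$, because the cyclotomic part can contribute $p$-factors. For instance, with $k=\mathbb{F}_2$, $K=\mathbb{F}_2(t)$, $\ell=5$, $M=5$, the subextension $K(\zeta_5)/K$ lies in $\kbar K$ and has degree $4$, divisible by $\ch(k)=2$; so Remark \ref{new-rem} cannot be invoked the way you do. This does not sink your proof, because the step it supports is redundant in your own architecture: for both the lower bound and the final maximality you only need the trivial inclusion (every constant subextension of $E/K$ lies in $E\cap\kbar K$), and once the degree sandwich yields $L_{M,n}=E\cap\kbar K$, the maximality of $L_{M,n}$ among constant subextensions is immediate because $L_{M,n}$ is constant by construction --- which is precisely how the paper argues. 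If you do want the a priori statement that $E\cap\kbar K$ is constant over $K$, replace coprimality by separability: $E/K$ is Galois, hence $F:=E\cap\kbar K$ is separable over $K$; with $L'/k$ finite normal such that $F\subseteq KL'$ and $L/k$ its maximal separable subextension, $FL/KL$ is a separable subextension of the purely inseparable extension $KL'/KL$, forcing $FL=KL$, so $F\subseteq KL$ and Lemma \ref{algebraic} applies.
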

	\begin{proof}
		By definition of the $\ell$-divisibility parameters, $L_{M,n}$ is a constant extension of $K$ contained in  $K(\zeta_{M}, \sqrt[\ell^n]{G})$. Remark that we have 
		$$\kbar K(\sqrt[\ell^n]{G})=\overline{k}K(\sqrt[\ell^{\max(n-D_1, 0)}]{\beta_1}, \ldots, \sqrt[\ell^{\max(n-D_r, 0)}]{\beta_r})\,.$$
		Theorem \ref{Theorem_Equivalent_ell_Independent} ensures that the latter field is a geometric extension of $\overline{k}K$ whose Galois group is isomorphic to $\prod_{i=1}^r \mathbb Z/\ell^{\max(n-D_i,0)} \mathbb Z$.
		To conclude, it suffices to show that 
		$K(\zeta_{M}, \sqrt[\ell^n]{G})\cap \overline{k}K =L_{M,n}$.
		This is the case because the degree of $K(\zeta_{M}, \sqrt[\ell^n]{G})/L_{M,n}$ is a multiple of $\bar k K(\sqrt[\ell^n]{G})/\bar k K$ hence it equals $\sum_{i=1}^r \max(n-D_i,0)$
		(it is at most this number by Kummer theory as $\sqrt[\ell^n]{\alpha_i}$ is an $\ell^{\max(n-D_i,0)}$-th root of $\sqrt[\ell^{\min(D_i, n)}]{\alpha_i}$).
	\end{proof}
	
	
	\subsection{Good basis modulo constants}
	
	In the following definition, we let $D_i$ be the $\ell$-divisibility parameters of $\alpha_i$ over $K$.
	
	\begin{defi}\label{good-basis-modulo}
		Consider a finitely generated subgroup $G$ of $K^\times$ that intersects $\tk$ only trivially and that has positive rank $r$.
		Let $\{\alpha_1,\ldots, \alpha_r\}$ be a basis of $G$. This basis is said to be an \emph{$\ell$-good basis modulo constants} if, equivalently:
		\begin{itemize}
			\item[(i)] the quantity $\sum_{i=1}^r D_i$ is maximal among the bases of $G$;
			\item[(ii)] we can write $\alpha_i= c_i \beta_i^{\ell^{D_i}}$ where $c_i\in \tk$ and $\beta_i\in K$ and $\beta_1, \ldots, \beta_r$ are $\ell$-independent modulo constants.
		\end{itemize}
		The multiset of $D_1, \cdots, D_r$ is uniquely determined by $K$ and $G$ (see Remark \ref{Remark_Multiset_Good_Basis_Modulo_tk}). We call it the \emph{$\ell$-divisibility parameters modulo constants of $G$ over $K$}.
	\end{defi}
	
	\begin{prop}\label{Proposition_Equivalence_of_Definitions}
		Definition \ref{good-basis-modulo} is well-posed, namely the equivalence holds true.
	\end{prop}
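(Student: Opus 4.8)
The plan is to translate the whole statement into the torsion-free abelian group $Q:=K^\times/\tk^\times$ and reduce the equivalence to a single clean condition about a finite-index saturation. First I would set up a dictionary. Since $\tk$ is algebraically closed in $K$, the group $Q$ is torsion-free, and by Lemma \ref{Lemma_Prime_Divisors} the divisor map $\operatorname{div}\colon Q\to\bigoplus_{v\in\Prime(K/k)}\Z$ is a well-defined injective homomorphism: constants have trivial divisor by (i), and a nonconstant element has some $v$ with $v(f)\neq 0$ by (ii). Writing $\bar\gamma$ for the image of $\gamma\in K^\times$ in $Q$, the $\ell$-divisibility parameter $D$ of $\alpha$ modulo constants is exactly the largest $D$ with $\bar\alpha\in Q^{\ell^D}$; the element $\bar\beta$ with $\bar\beta^{\ell^{D}}=\bar\alpha$ and $\bar\beta\notin Q^\ell$ is unique by torsion-freeness, so the property in (ii) depends only on the basis $\{\alpha_i\}$; and ``$\beta_1,\dots,\beta_r$ are $\ell$-independent modulo constants'' says precisely that the images of $\bar\beta_1,\dots,\bar\beta_r$ in $Q/Q^\ell$ are $\mathbb{F}_\ell$-linearly independent. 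Set $\bar G=\langle\bar\alpha_1,\dots,\bar\alpha_r\rangle\cong\Z^r$.

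Next I would introduce the $\ell$-saturation $\Gamma:=\{x\in Q: x^{\ell^m}\in\bar G\text{ for some }m\geq 0\}$. It is $\ell$-pure (if $x\in\Gamma$ and $x=y^\ell$ then $y\in\Gamma$), so $\Gamma\cap Q^\ell=\Gamma^\ell$ and the induced map $\Gamma/\Gamma^\ell\hookrightarrow Q/Q^\ell$ is injective. I expect the hardest step to be showing that $[\Gamma:\bar G]$ is finite, equivalently that $\Gamma$ is finitely generated of rank $r$; this is the function-field input ruling out infinite $\ell$-divisibility, cf. Lemma \ref{Lemma_Prime_Divisors}(iii). Here I would use the divisor map directly: $\operatorname{div}(\bar G)$ lies in the finite-rank free group $\bigoplus_{v\in S}\Z$, where $S=\bigcup_i\operatorname{supp}(\operatorname{div}\alpha_i)$ is finite; for $x\in\Gamma$ the relation $\ell^m\operatorname{div}(x)=\operatorname{div}(x^{\ell^m})\in\operatorname{div}(\bar G)$ forces $\operatorname{div}(x)$ to be supported on $S$ and to lie in the saturation $T$ of $\operatorname{div}(\bar G)$ inside $\bigoplus_{v\in S}\Z$. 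Since the saturation of a finitely generated subgroup of a finite-rank free abelian group has finite index over it, and $\operatorname{div}$ is injective, we get $[\Gamma:\bar G]\leq[T:\operatorname{div}\bar G]<\infty$. Thus $\Gamma$ is free of rank $r$ and $[\Gamma:\bar G]=\ell^{E}$ for some $E\geq 0$.

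With finiteness in hand I would show that both (i) and (ii) are equivalent to the single condition $\langle\bar\beta_1,\dots,\bar\beta_r\rangle=\Gamma$. For an arbitrary basis with parameters $D_i$, the $\bar\beta_i$ are $\mathbb{Q}$-independent and satisfy $\bar\beta_i^{\ell^{D_i}}=\bar\alpha_i$, so $\langle\bar\beta_i\rangle$ is free of rank $r$ with $[\langle\bar\beta_i\rangle:\bar G]=\ell^{\sum_i D_i}$; as $\bar\beta_i\in\Gamma$ we have $\bar G\subseteq\langle\bar\beta_i\rangle\subseteq\Gamma$, whence $\sum_i D_i\leq E$ with equality iff $\langle\bar\beta_i\rangle=\Gamma$. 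A Smith normal form for $\bar G\subseteq\Gamma$ yields a basis realizing $\sum_i D_i=E$, so the maximum in (i) equals $E$ and (i) holds iff $\langle\bar\beta_i\rangle=\Gamma$. On the other hand, by purity the $\bar\beta_i$ are $\ell$-independent iff their images form a basis of $\Gamma/\Gamma^\ell$, iff (by Nakayama, since $[\Gamma:\langle\bar\beta_i\rangle]$ is a power of $\ell$) they form a basis of $\Gamma$, iff $\langle\bar\beta_i\rangle=\Gamma$; this is exactly (ii). Since both chains of equivalences pass through the same middle condition $\langle\bar\beta_i\rangle=\Gamma$, we conclude (i) $\Leftrightarrow$ (ii).
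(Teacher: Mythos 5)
Your proof is correct, and it takes a genuinely different route from the paper's. The paper treats the two implications separately and stays inside its field-theoretic machinery: for (i)$\Rightarrow$(ii) it argues constructively that if the $\beta_i$ fail to be $\ell$-independent modulo constants, then one can replace a basis element by a product $\prod_{j\in J}\alpha_j^{x_j\ell^{D_1-D_j}}$ whose divisibility parameter is strictly larger, contradicting maximality of $\sum_i D_i$; for (ii)$\Rightarrow$(i) it invokes Corollary \ref{cor-Kummerdeg-modulo}, comparing the $\ell$-adic valuation $nr-\sum_i D_i$ of $[\kbar K(\sqrt[\ell^n]{G}):\kbar K]$ against the upper bound $nr-\sum_i D_i'$ that Kummer theory gives for any competing basis. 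You instead linearize the whole question inside the torsion-free group $Q=K^\times/\tk^\times$ and show that (i) and (ii) are each equivalent to the single condition $\langle\bar\beta_1,\ldots,\bar\beta_r\rangle=\Gamma$, where $\Gamma$ is the $\ell$-saturation of $\bar G$ in $Q$; the function-field input is confined to the finiteness of $[\Gamma:\bar G]$, which you obtain from the injectivity of the divisor map --- exactly the device the paper itself uses later, in the proof of Theorem \ref{existbasis}(iii) --- and the rest is Smith normal form plus a Nakayama-type argument for finite $\ell$-groups. Your approach buys several things: it is elementary (no Kummer or Galois theory enters), it is logically independent of Theorem \ref{Theorem_Equivalent_ell_Independent} and Corollary \ref{cor-Kummerdeg-modulo} (so it could be placed earlier in the paper), and it delivers as byproducts the existence of an $\ell$-good basis (part (i) of Theorem \ref{existbasis}, via Smith normal form), the identification of the maximal value of $\sum_i D_i$ as $v_\ell([\Gamma:\bar G])$, and --- with one more line, reading off the elementary divisors of $\bar G\subseteq\Gamma$ --- the uniqueness of the multiset of the $D_i$'s asserted in Remark \ref{Remark_Multiset_Good_Basis_Modulo_tk}. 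What the paper's route buys in exchange is a constructive basis-improvement step, which underlies the algorithm mentioned in Remark \ref{Remark_Multiset_Good_Basis_Modulo_tk}, and a proof that runs entirely through the Kummer-degree formulas that Section \ref{sec:Kum} needs anyway, so no auxiliary abstract objects such as $\Gamma$ have to be introduced.
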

	\begin{proof}
		\emph{(i)$\Rightarrow$(ii)} By the definition of the $\ell$-divisibility parameters we can write $\alpha_i= c_i \beta_i^{\ell^{D_i}}$ where $c_i\in \tk$ and $\beta_i\in K$. Suppose that the $\beta_i$'s are not $\ell$-independent modulo constants. So there are a non-empty subset $J\subseteq \{1,\ldots, r\}$ and integers $x_j$ coprime to $\ell$ such that 
		$\beta:=\prod_{j\in J} \beta_j^{x_j}$ is not $\ell$-indivisible modulo constants. 
		We may suppose without loss of generality that $1\in J$ and $D_1=\max\{ D_j : j \in J \}$. We may additionally suppose that $x_1=1$ (by raising $\beta$ to some power $y_1$ such that $x_1y_1\equiv 1 \bmod \ell$ and then discarding an $\ell$-th power).
		Define $$\alpha:=\prod_{j\in J} \alpha_j^{x_j \ell^{D_1-D_j}}\,.$$ By inspecting the factor  $j=1$, we may replace $\alpha_1$ by $\alpha$ and still get a basis of $G$. Moreover, by construction 
		$$
		\alpha \beta^{-\ell^{D_1}}=\prod_{j\in J} \alpha_j^{x_j \ell^{D_1-D_j}} \left( \prod_{j\in J} \beta_j^{x_j\ell^{D_1}} \right)^{-1} = \prod_{j\in J} c_j^{x_j \ell^{D_1-D_j}}
		$$ is a constant hence (since $\beta$ is not $\ell$-indivisible modulo constants) the first divisibility parameter of $\alpha$ is strictly larger than $D_1$, contradicting the maximality of $\sum_{i=1}^r D_i$.
		
		\emph{(ii)$\Rightarrow$ (i)}
		We cannot have a different basis $\{\alpha_1',\cdots,\alpha'_r\}$ of $G$ such that $\sum_{i=1}^r D'_i>\sum_{i=1}^r D_i$. This is because, if $n$ is sufficiently large,  
		by Corollary \ref{cor-Kummerdeg-modulo} the degree of 
		$\kbar K(\sqrt[\ell^n]{G})/\kbar K$
		has $\ell$-adic valuation $nr-\sum_{i=1}^r D_i$ while $$[\kbar K(\sqrt[\ell^n]{G}):\kbar K]=[\kbar K(\sqrt[\ell^{\max (n-D'_1,0)}]{\alpha'_1}, \ldots, \sqrt[\ell^{\max (n-D'_r,0)}]{\alpha'_r}):\kbar K]\leq nr-\sum_{i=1}^r D'_i\,.$$
	\end{proof}
	
	\begin{thm}\label{existbasis}
		Consider a finitely generated subgroup $G$ of $K^\times$ of rank $r > 0$ that intersects $\tk$ only trivially. Then the following statements hold.
		\begin{itemize}
			\item[(i)] The group $G$ has an $\ell$-good basis modulo constants.
			\item[(ii)] For all sufficiently large $n$ we have 
			\begin{equation}\label{eq-growth}
				[\kbar K (\sqrt[\ell^{n+1}]{G}):\kbar K (\sqrt[\ell^n]{G})]=\ell^{r}\,.
			\end{equation}
			\item[(iii)] There exists an integer $\varepsilon\geq 0$ such that for all $n>\varepsilon$ we have
			$$G\cap \tk^\times K^{\times \ell^n} \subseteq G^{\ell^{n-\epsilon}}\,.$$
		\end{itemize}
	\end{thm}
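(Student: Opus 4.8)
For part (i), the plan is to establish existence of an $\ell$-good basis modulo constants directly from the maximality characterization, which by Proposition \ref{Proposition_Equivalence_of_Definitions} is equivalent to the $\ell$-independence formulation. The key point is that the quantity $\sum_{i=1}^r D_i$, taken over all bases of $G$, is bounded above and hence attains a maximum. First I would note that for any basis $\{\alpha_1,\ldots,\alpha_r\}$, each $D_i$ is a finite non-negative integer by Lemma \ref{up-to-constants} (since each $\alpha_i \in K^\times\setminus\tk^\times$, as $G$ meets $\tk$ trivially and has positive rank). The boundedness of the sum follows from Corollary \ref{cor-Kummerdeg-modulo}: for $n$ large the $\ell$-adic valuation of $[\kbar K(\sqrt[\ell^n]{G}):\kbar K]$ equals $nr - \sum_{i=1}^r D_i$, and this degree is intrinsic to $G$ and $K$, independent of the chosen basis; since a degree is positive, the sum is bounded by $nr$. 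Thus a maximizing basis exists, which is by definition an $\ell$-good basis modulo constants.

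For part (ii), I would fix an $\ell$-good basis modulo constants $\{\alpha_1,\ldots,\alpha_r\}$ from part (i) and write $\alpha_i = c_i\beta_i^{\ell^{D_i}}$ with the $\beta_i$ being $\ell$-independent modulo constants. By the description of the geometric Kummer extension in the proof of Corollary \ref{cor-Kummerdeg-modulo}, for every $n$ we have
$$\kbar K(\sqrt[\ell^n]{G}) = \kbar K(\sqrt[\ell^{\max(n-D_1,0)}]{\beta_1},\ldots,\sqrt[\ell^{\max(n-D_r,0)}]{\beta_r}),$$
and Theorem \ref{Theorem_Equivalent_ell_Independent} gives that this extension of $\kbar K$ has degree $\ell^{\sum_i \max(n-D_i,0)}$. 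Setting $\varepsilon := \max_i D_i$, for every $n \geq \varepsilon$ each term $\max(n-D_i,0)$ equals $n-D_i$, so passing from $n$ to $n+1$ increases the exponent by exactly $r$, which yields \eqref{eq-growth}.

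For part (iii), I would use the same $\varepsilon = \max_i D_i$ and argue via the $\ell$-adic valuations of exponent vectors. Suppose $\eta \in G\cap \tk^\times K^{\times\ell^n}$ with $n > \varepsilon$, and write $\eta = \prod_i \alpha_i^{a_i} = \prod_i c_i^{a_i}\,\beta_i^{a_i\ell^{D_i}}$. Since $\eta \in \tk^\times K^{\times\ell^n}$, the element $\prod_i \beta_i^{a_i\ell^{D_i}}$ lies in $\tk^\times K^{\times\ell^n}$; using that the $\beta_i$ are $\ell$-independent modulo constants, I expect to deduce that $\ell^n \mid a_i\ell^{D_i}$, i.e.\ $v_\ell(a_i) \geq n - D_i \geq n-\varepsilon$ for every $i$. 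This gives $\eta \in G^{\ell^{n-\varepsilon}}$, as required. \textbf{The main obstacle will be the last step of part (iii):} extracting the divisibility conclusion $v_\ell(a_i)\geq n-D_i$ from the single condition that the product $\prod_i\beta_i^{a_i\ell^{D_i}}$ is $\ell$-divisible modulo constants to order $n$. This requires converting $\ell$-independence modulo constants of the $\beta_i$ into a statement about simultaneous divisibility of the individual exponents, which is most cleanly argued by an induction peeling off one $\beta_i$ at a time, or by translating into the linear-disjointness/degree count of $\kbar K(\sqrt[\ell^n]{G})/\kbar K$ from Theorem \ref{Theorem_Equivalent_ell_Independent} so that each coordinate is controlled separately.
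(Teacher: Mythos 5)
Your proof of part (i) contains a genuine gap: it is circular. To show that $\sum_{i=1}^r D_i$ is bounded over all bases of $G$, you apply Corollary \ref{cor-Kummerdeg-modulo} to an \emph{arbitrary} basis and assert that the $\ell$-adic valuation of $[\kbar K(\sqrt[\ell^n]{G}):\kbar K]$ equals $nr-\sum_i D_i$ for large $n$. But the hypothesis of that corollary is that the elements $\beta_i$ in the decompositions $\alpha_i=c_i\beta_i^{\ell^{D_i}}$ are $\ell$-independent modulo constants, which by Definition \ref{good-basis-modulo}(ii) is precisely the condition that $\{\alpha_1,\ldots,\alpha_r\}$ be an $\ell$-good basis --- the very object whose existence you are trying to prove. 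For an arbitrary basis one only gets the upper bound $[\kbar K(\sqrt[\ell^n]{G}):\kbar K]\leq \ell^{\sum_i\max(n-D_i,0)}$, and an upper bound together with positivity of the degree gives no control on $\sum_i D_i$ whatsoever. Moreover, your intermediate claim, taken literally, would force every basis of $G$ to have the same value of $\sum_i D_i$, which is false: for $K=k(t)$ and $G=\langle t^{\ell},\,t+1\rangle$, the basis $\{t^{\ell},\,t+1\}$ has $\sum_i D_i=1$ while the basis $\{t^{\ell}(t+1),\,t+1\}$ has $\sum_i D_i=0$. Since your parts (ii) and (iii) both presuppose the good basis produced in (i), nothing in the proposal is established unconditionally.

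The missing ingredient is an unconditional \emph{lower} bound on the degree growth, and the paper supplies it by running the logic in the opposite order. It first proves (iii) with no Kummer theory at all: by Lemma \ref{Lemma_Prime_Divisors}, the map $\Phi\colon G\to\Z^{\#S}$, $g\mapsto(v(g))_{v\in S}$, where $S$ is the finite set of prime divisors $v$ with $v(g)\neq 0$ for some $g\in G$, is injective because $G\cap\tk^\times=\{1\}$; choosing bases of $G$ and of $\Z^{\#S}$ so that $\Phi(G)$ is spanned by vectors $c_1e_1,\ldots,c_re_r$ (Smith normal form) and setting $\varepsilon=\max_i v_\ell(c_i)$, any $g\in G\cap\tk^\times K^{\times\ell^n}$ satisfies $\ell^n\mid v(g)$ for all $v\in S$ and hence lies in $G^{\ell^{n-\varepsilon}}$. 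Then (iii) is equivalent to (ii) by Kummer theory over $\kbar K$, since $G\cap(\kbar K)^{\times\ell^n}=G\cap\tk^\times K^{\times\ell^n}$, and (ii) implies (i): if $\sum_i D_i$ were unbounded over bases, then for every $n$ some basis would have a parameter $D_i>n$, so that $\sqrt[\ell^{n+1}]{\alpha_i}\in\kbar K$, forcing $[\kbar K(\sqrt[\ell^{n+1}]{G}):\kbar K(\sqrt[\ell^n]{G})]\leq\ell^{r-1}$ and contradicting \eqref{eq-growth}. Your sketches of (ii) and of the peeling-off induction in (iii) are sound once a good basis is available (the latter essentially re-derives (iii) from (i), which is consistent with the paper's observation that (ii) and (iii) are equivalent), but to repair the proposal you must first supply the valuation-theoretic argument; it cannot be extracted from Corollary \ref{cor-Kummerdeg-modulo}.
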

	\begin{proof}
		Observing that $(\bar k K )^{\times \ell^n}=\bar k^\times (K ^{\times \ell^n})$, we have $G\cap (\bar k K )^{\times \ell^n}=G\cap \tk^\times K^{\times \ell^n}$ hence $(iii)$ is equivalent to $(ii)$ by Kummer theory over $\kbar K$.
		If $G$ has an $\ell$-good basis modulo constants, then  $(ii)$ is proven in Corollary \ref{cor-Kummerdeg-modulo}. The converse implication is because, if there is a basis such that $\sum_i D_i > (n+1)r$, then \eqref{eq-growth} does not hold.
		
		We conclude by proving $(iii)$. By applying Lemma \ref{Lemma_Prime_Divisors} to a set of generators of $G$, there is a finite non-empty subset $S$ of $\mathrm{Prime}(K/k)$ of cardinality $\# S$ consisting of the divisors $v$ such that there is some $g\in G$ such that $v(g)\neq 0$. Consider the group morphism 
		$$\Phi: G\rightarrow \mathbb Z^{\#S}\qquad g\mapsto (v(g))_{v\in S}\,.$$
		The kernel of $\Phi$ is trivial because by Lemma \ref{Lemma_Prime_Divisors} it consists of constants, thus $\Phi(G)$ is a subgroup of $\mathbb Z^{\#S}$ of rank $r$. By choosing an appropriate basis of $G$, the basis of $\Phi(G)$ consists of the column vectors of the following matrix, where the integers $c_1,\ldots, c_r$ are the non-zero entries:
		$$\begin{pmatrix}
			
			c_1 & & & \\
			& c_2& & \\
			& & \ddots & \\
			& & & c_r \\
			&  &  &   \\
			&  &  &   \\
		\end{pmatrix}\,.$$
		Let $\varepsilon$ be the maximum of the $\ell$-adic valuation of the integers $c_i$'s and fix $n>\varepsilon$.
		If $g\in G$ is such that $\ell^n\mid  v(g)$ for every $v\in S$, then all the coordinates of $\Phi(g)$ with respect to the above basis are divisible by $\ell^{n-\varepsilon}$, so we deduce that $g\in G^{\ell^{n-\epsilon}}$.
	\end{proof}
	
	\begin{thm}\label{exists}
		Let $K/k$ be a function field and let $G$ be a finitely generated subgroup of $K^\times$. An $\ell$-good basis modulo constants exists for $G$ if and only if the rank of $G$ is the same as the rank of $G$ modulo constants.
	\end{thm}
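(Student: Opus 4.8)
The plan is to reduce the statement to a rank computation inside the torsion-free abelian group $K^\times/\tk^\times$ and then to invoke Theorem \ref{existbasis}(i) for the substantive direction. First I would record the structural fact underlying the whole argument, namely that $K^\times/\tk^\times$ is torsion-free: if $\alpha \in K^\times$ satisfies $\alpha^n \in \tk^\times$, then $\alpha$ is algebraic over $k$ and lies in $K$, hence $\alpha \in \tk^\times$ because $\tk$ is algebraically closed in $K$. Writing $G_0 := G\cap\tk^\times$, it follows that $G/G_0$ embeds into $K^\times/\tk^\times$ and is therefore finitely generated and torsion-free, hence free; its rank is by definition the rank of $G$ modulo constants. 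From the exact sequence $1\to G_0\to G\to G/G_0\to 1$ I then obtain
\[
\rank(G)=\rank(G_0)+(\text{rank of }G\text{ modulo constants}),
\]
so the rank condition in the statement is equivalent to $\rank(G_0)=0$, i.e. to $G_0=G\cap\tk^\times$ being finite.

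For the ``only if'' direction I would argue by a rank count. If an $\ell$-good basis modulo constants $\{\alpha_1,\dots,\alpha_r\}$ exists (so $r=\rank(G)$), then condition (ii) of Definition \ref{good-basis-modulo} writes $\alpha_i=c_i\beta_i^{\ell^{D_i}}$ with $c_i\in\tk$ and with $\beta_1,\dots,\beta_r$ that are $\ell$-independent modulo constants. Their images in $K^\times/\tk^\times$ are then $\Z$-linearly independent, and since $\overline{\alpha_i}=\overline{\beta_i}^{\,\ell^{D_i}}$, the images $\overline{\alpha_i}$ generate a subgroup of rank $r$ as well. As these images generate the image of $G$ in $K^\times/\tk^\times$ (the torsion of $G$ consisting of roots of unity, hence lying in $\tk^\times$), the rank of $G$ modulo constants is at least $r=\rank(G)$, forcing equality.

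For the ``if'' direction, assuming the ranks agree, the computation above gives that $G_0$ is torsion; since every torsion element of $K^\times$ is a root of unity and hence lies in $\tk^\times$, the torsion subgroup of $G$ is contained in $G_0$ and therefore equals $G_0$, so $G/G_0$ is free. I would then split the sequence $1\to G_0\to G\to G/G_0\to 1$ to produce a complement $G'$ with $G=G'G_0$ and $G'\cap\tk^\times\subseteq G'\cap G_0=\{1\}$; this $G'$ is free of rank $\rank(G)$ and meets $\tk$ only trivially, so Theorem \ref{existbasis}(i) supplies an $\ell$-good basis modulo constants of $G'$, which is then the required basis for $G$.

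The step I expect to be the main obstacle is the bookkeeping in the general case where $G$ may meet $\tk^\times$ nontrivially: one must isolate the constant/torsion part $G_0$ and split it off cleanly, which is precisely where the torsion-freeness of $K^\times/\tk^\times$ is used, and one must observe that condition (ii) forces exactly $\rank(G)$ elements $\beta_i$ that are $\ell$-independent modulo constants, a supply that $K^\times/\tk^\times$ can accommodate only when the rank does not drop modulo constants. Once this is in place, the ``if'' direction is a direct appeal to Theorem \ref{existbasis}(i) and the ``only if'' direction is the rank count above.
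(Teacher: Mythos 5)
Your proposal is correct, and it is half the paper's proof and half a genuinely different argument. For the ``if'' direction the paper also simply invokes Theorem \ref{existbasis}(i); your explicit reduction (rank equality is equivalent to $G_0=G\cap\tk^\times$ being finite, then splitting $G=G_0\times G'$ using that $G/G_0$ is free because $K^\times/\tk^\times$ is torsion-free) is precisely the bookkeeping the paper leaves implicit, and spelling it out is an improvement: note that both you and the paper must then read ``the required basis for $G$'' as the basis of the complement $G'$, since Definition \ref{good-basis-modulo} literally requires trivial intersection with $\tk$; this is the intended convention when the constant part of $G$ is finite but nontrivial. For the ``only if'' direction you diverge from the paper: it works over $\kbar K$ and extracts the rank from the Galois group $\prod_{i=1}^r\Z/\ell^{\max(n-D_i,0)}\Z$ of Corollary \ref{cor-Kummerdeg-modulo} for $n$ large, via Kummer theory over $\kbar K$, whereas you run a purely group-theoretic rank count inside $K^\times/\tk^\times$. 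Your route is more elementary and self-contained; the paper's is a one-line consequence of machinery it has already built and ties the statement directly to the Kummer-degree results it feeds into. The one step you should write out is the claim that $\ell$-independence modulo constants of $\beta_1,\dots,\beta_r$ forces $\Z$-linear independence of their images in $K^\times/\tk^\times$: if $\prod_i\beta_i^{x_i}\in\tk^\times$ with the $x_i$ not all zero, write $x_i=\ell^m y_i$ with some $y_i$ prime to $\ell$; then $\bigl(\prod_i\beta_i^{y_i}\bigr)^{\ell^m}\in\tk^\times$, so by the torsion-freeness you established $\prod_i\beta_i^{y_i}\in\tk^\times\subseteq\tk^\times K^{\times\ell}$, contradicting $\ell$-independence modulo constants. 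This is a one-liner, but it is exactly where your rank count uses the hypothesis, so it should not be left as an assertion.
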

	\begin{proof}
		The sufficiency has been shown in Theorem \ref{existbasis} and the necessity follows from Corollary \ref{cor-Kummerdeg-modulo} by working over $\kbar K$.
	\end{proof}
	
	\begin{rem}\label{Remark_Multiset_Good_Basis_Modulo_tk}
		The multiset of parameters $D_i$ for an $\ell$-good basis modulo constants is uniquely determined by $K$ and $G$. This is clear from the Galois group structure in Corollary \ref{cor-Kummerdeg-modulo} by taking $n>\max\{ D_i : 1 \leq i \leq r \}$.
		Provided that $G$ admits an $\ell$-good basis modulo constants, there is an easy algorithm to construct one from any given basis of $G$, which is based on the following observation: if the elements of a given basis of $G$ are not $\ell$-independent modulo constants, then there is a way to modify one element of the basis and increase $\sum_i D_i$ (following the strategy in the proof of \cite[Theorem 14]{DebryPerucca}, which also resembles the proof of Proposition \ref{Proposition_Equivalence_of_Definitions} $(i) \Rightarrow (ii)$).
	\end{rem}
	
	
	\section{Reducing to cyclotomic-Kummer extensions of the constant field}\label{sec:Kum}
	
	Consider a function field $K/k$ and a Kummer extension of the form
	$$K(\zeta_{M}, \sqrt[N]{G})/K(\zeta_{M})\,,$$
	where $G$ is a finitely generated subgroup of $K^\times$ and $M$ and $N$ are positive integers such that $M$ is divisible by $N$ but not by $\ch(K)$. In this section, we precisely show how the questions of computing the degree of this extension and of computing the group structure of its Galois group reduce to the corresponding questions for the Kummer extension of the constant fields.
	
	Considering the prime factorization $N=\prod_{\ell \mid N} \ell^{v_\ell(N)}$, by  Kummer theory (notice the pairwise coprime exponents of the Kummer extensions in the product group) we have a group isomorphism
	$$\Gal(K(\zeta_{M}, \sqrt[N]{G})/K(\zeta_{M}))\simeq \prod_{\ell\mid N} \Gal(K(\zeta_{M}, \sqrt[\ell^{v_\ell(N)}]{G})/K(\zeta_{M}))\,.$$
	We will therefore restrict to the case where $N=\ell^n$
	for some fixed prime number $\ell$ different from $\ch(k)$ and for some integer $n\geq 1$. We may then without loss of generality replace $K$ by $K(\zeta_\ell)$. We may also suppose that $G$ is not a subgroup of $\tk^\times$. Hence, from now on $G_0 := G \cap \tk$ is a proper subgroup of $G$ and we can write the set decomposition $G=G_0G'$, where $G'$ is a subgroup of $G$ isomorphic to $G/G_0$. Let $r>0$ be the rank of $G'$ and let $D_1, \ldots, D_r$ be the $\ell$-divisibility parameters modulo constants of $G'$ over $K$. Up to relabelling, we suppose without loss of generality that $D_1\leq D_2\leq \cdots \leq D_r$.
	Let $\alpha_i=c_i \beta_i^{\ell^{D_i}}$ be the elements of an $\ell$-good basis modulo constants of $G'$ over $K$, with $c_i\in \tk$ and $\beta_i\in K$. 
	
	The following result reduces the problem of determining the degree of the Kummer extension $K(\zeta_{M}, \sqrt[\ell^{n}]{G})/K(\zeta_{M})$ to determine the degree of a Kummer extension that only depends on $\tk(\zeta_{M})$, $\ell^n$, $G_0$, the $c_i$'s and the $D_i$'s:
	
	\begin{thm}\label{Main_Theorem_Part_2}
		Define $H$ as the subgroup of $K^\times$ generated by $G_0$ and by the elements $c_i^{\ell^{\max(n-D_i,0)}}$.
		Then we have 
		$$[K(\zeta_{M}, \sqrt[\ell^{n}]{G}):K(\zeta_{M})]=[\tk(\zeta_{M}, \sqrt[\ell^{n}]{H}):\tk(\zeta_{M})]\cdot \prod_{i=1}^r \ell^{\max(n-D_i,0)}\,.$$
	\end{thm}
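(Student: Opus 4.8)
The plan is to work throughout over $K_M:=K(\zeta_M)$, whose constant field is $\tk_M:=\tk(\zeta_M)$ by Proposition \ref{Proposition_Constant_Field_Root_of_Unity}, and to set $F:=K_M(\sqrt[\ell^n]{G})$. Since $\ell^n\mid M$ we have $\zeta_{\ell^n}\in K_M$, so $F/K_M$ is a finite Kummer (hence Galois) extension. Writing $C$ for the maximal constant subextension of $F/K_M$, I would factor
$$[F:K_M]=[F:C]\cdot[C:K_M]$$
and prove separately that $[F:C]=\prod_{i=1}^r\ell^{\max(n-D_i,0)}$ (the \emph{geometric} factor) and that $[C:K_M]=[\tk_M(\sqrt[\ell^n]{H}):\tk_M]$ (the \emph{constant} factor). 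Multiplying these gives the statement.

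For the geometric factor, first observe that $C=F\cap\kbar K_M$: every constant subextension lies in $\kbar K_M$, and conversely $F\cap\kbar K_M$ is a finite extension of $K_M$ of degree a power of $\ell\neq\ch(k)$ contained in $K_M\kbar$, so it is constant by Lemma \ref{algebraic} (applicable via Remark \ref{new-rem}). Since $F/K_M$ is finite Galois, the restriction isomorphism $\Gal(F\kbar K_M/\kbar K_M)\xrightarrow{\sim}\Gal(F/F\cap\kbar K_M)$ yields $[F:C]=[F\kbar K_M:\kbar K_M]$. Because $G_0\subseteq\tk^\times$ and each $c_i\in\tk$ lie in $\kbar$, over $\kbar K=\kbar K_M$ the elements $\sqrt[\ell^n]{G_0}$ and $\sqrt[\ell^n]{c_i}$ become constants, so $\kbar K(\sqrt[\ell^n]{G})=\kbar K(\sqrt[\ell^{\max(n-D_i,0)}]{\beta_i}:1\le i\le r)$. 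As the $\beta_i$ are $\ell$-independent modulo constants, Theorem \ref{Theorem_Equivalent_ell_Independent} gives that this extension is geometric of degree $\prod_{i=1}^r\ell^{\max(n-D_i,0)}$, which is therefore $[F:C]$.

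For the constant factor, I would identify $C$ explicitly. Set $F_0:=K_M(\sqrt[\ell^n]{G_0})$, a finite constant Galois extension, so that $F=F_0\cdot K_M(\sqrt[\ell^n]{G'})$. By Corollary \ref{cor-Kummerdeg-modulo} the maximal constant subextension of $K_M(\sqrt[\ell^n]{G'})/K_M$ is $L_{M,n}=K_M(\sqrt[\ell^{\min(D_i,n)}]{\alpha_i}:i)$, and Lemma \ref{42} then shows that $C=F_0L_{M,n}$. Since $\beta_i\in K$, the root $\sqrt[\ell^{\min(D_i,n)}]{\alpha_i}$ generates over $K_M$ the same constant extension as $\sqrt[\ell^{\min(D_i,n)}]{c_i}$, so $C=K_M\cdot\tk_M(\sqrt[\ell^n]{G_0},\sqrt[\ell^{\min(D_i,n)}]{c_i}:i)$. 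By Theorem \ref{Theorem_Separable} its constant field is $k_C:=\tk_M(\sqrt[\ell^n]{G_0},\sqrt[\ell^{\min(D_i,n)}]{c_i}:i)$, and by Lemma \ref{Lemma_Linearly_Disjoint_Algebraic_Element} we get $[C:K_M]=[k_C:\tk_M]$. The elementary identity $\max(n-D_i,0)-n=-\min(D_i,n)$ gives $\sqrt[\ell^n]{c_i^{\ell^{\max(n-D_i,0)}}}=\sqrt[\ell^{\min(D_i,n)}]{c_i}$, whence $k_C=\tk_M(\sqrt[\ell^n]{H})$ for $H=\langle G_0,c_i^{\ell^{\max(n-D_i,0)}}\rangle$. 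This gives $[C:K_M]=[\tk_M(\sqrt[\ell^n]{H}):\tk_M]$, and combining with the previous paragraph completes the proof.

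The main obstacle is the clean separation of $F$ into its geometric and constant parts, that is, proving $[F:C]$ equals the purely geometric degree $\prod_i\ell^{\max(n-D_i,0)}$ even though $F$ is built from the mixed generators $\alpha_i=c_i\beta_i^{\ell^{D_i}}$; this is exactly where one must combine Lemma \ref{42} (to pin down $C=F_0L_{M,n}$) with the geometric degree computation of Theorem \ref{Theorem_Equivalent_ell_Independent}. The accompanying delicate point is the bookkeeping that matches the constant component $\sqrt[\ell^{\min(D_i,n)}]{c_i}$ of $\sqrt[\ell^n]{\alpha_i}$ with the prescribed generator $c_i^{\ell^{\max(n-D_i,0)}}$ of $H$, which rests on the exponent identity above.
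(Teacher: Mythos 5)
Your proof is correct and takes essentially the same route as the paper's: both decompose $[K(\zeta_M,\sqrt[\ell^n]{G}):K(\zeta_M)]$ into the degree of the maximal constant subextension (identified as $K(\zeta_M,\sqrt[\ell^n]{H})$ via Lemma \ref{42} together with Corollary \ref{cor-Kummerdeg-modulo}) times the geometric degree $\prod_{i=1}^r \ell^{\max(n-D_i,0)}$, and then descend the constant degree to $\tk(\zeta_M)$ by Lemma \ref{Lemma_Linearly_Disjoint_Algebraic_Element}. The only difference is presentational: the paper cites Corollary \ref{cor-Kummerdeg-modulo} directly for the geometric factor, whereas you re-derive it from Theorem \ref{Theorem_Equivalent_ell_Independent} via the restriction isomorphism, which is exactly the argument underlying that corollary.
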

	\begin{proof} 
		Remark that, by Lemma \ref{Lemma_Linearly_Disjoint_Algebraic_Element}  applied to the monogenic extension $\tk(\zeta_{M}, \sqrt[\ell^{n}]{H})/\tk(\zeta_{M}$) we have 
		$$[\tk(\zeta_{M}, \sqrt[\ell^{n}]{H}):\tk(\zeta_{M})]=[K(\zeta_{M}, \sqrt[\ell^{n}]{H}):K(\zeta_{M})]\,.$$
		We may conclude by Corollary \ref{cor-Kummerdeg-modulo} (applied to $G'$) because the largest constant subextension of 
		$K(\zeta_{M}, \sqrt[\ell^{n}]{G})/K(\zeta_{M})$
		is generated by the $\ell^n$-th roots of $H$, in view of Lemma \ref{42} (where  $C$ is the Kummer extension generated by $G_0$ and  $K'$ is the Kummer extension generated by $G'$).
	\end{proof}
	
	\begin{defi}\label{di}
		We define elements $d_1,\ldots, d_r$ in $\mathbb Z_{\geq 0}\cup \{+\infty\}$ as follows: $d_i$ is maximal such that the image of $c_i$ in the quotient group $\tk(\zeta_{M})^{\times} / \langle G_0, c_j : j>i\rangle$ is an $\ell^{d_i}$-th power.
	\end{defi}
	
	\begin{rem}
		According to the use that we make of $d_1, \cdots, d_r$, we do not need their precise value if we know that $d_i\geq D_i$ for all $i$. However, we could compute $d_i$ by multiplying $c_i$ by an element of $\langle G_0, c_j : j>i\rangle$ (and using powers of the generators with exponents from $0$ to $\ell^{D_i}-1$). Notice that, if $G_0$, the $D_i$'s and the $c_i$'s are known in advance, then calculating the parameters $d_i$ only involves computations over $\tk(\zeta_M)$.
	\end{rem}
	
	\begin{thm}\label{special}
		With the above notation, we have a group isomorphism 
		$$\Gal\left(K(\zeta_{M}, \sqrt[\ell^{n}]{G})/K(\zeta_{M})\right)\simeq \Gal\left(\tk(\zeta_{M}, \sqrt[\ell^{n}]{G_0})/\tk(\zeta_{M})\right)\times  \prod_{i=1}^r \mathbb Z/ \ell^{\max (n-\min(D_i, d_i),0)} \mathbb Z\,.$$
	\end{thm}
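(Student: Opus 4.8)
The plan is to combine the degree formula of Theorem \ref{Main_Theorem_Part_2} with the Galois structure coming from the geometric part in Corollary \ref{cor-Kummerdeg-modulo}, and to identify the constant (non-geometric) contribution with a Kummer extension over $\tk(\zeta_M)$ whose structure is governed by the parameters $d_i$. First I would set up the tower by isolating the maximal constant subextension $L_{M,n}$ of $K(\zeta_M,\sqrt[\ell^n]{G'})/K(\zeta_M)$, as computed in Corollary \ref{cor-Kummerdeg-modulo}, together with the contribution of $G_0$. By Lemma \ref{42} (with $C$ the extension generated by $G_0$ and $K'$ the extension generated by $G'$), the full maximal constant subextension of $K(\zeta_M,\sqrt[\ell^n]{G})/K(\zeta_M)$ is the compositum, generated by the $\ell^n$-th roots of the group $H$ from Theorem \ref{Main_Theorem_Part_2}. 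The geometric quotient $\Gal(\kbar K(\sqrt[\ell^n]{G})/\kbar K)\simeq \prod_{i=1}^r \mathbb Z/\ell^{\max(n-D_i,0)}\mathbb Z$ will split off, and via Lemma \ref{Lemma_Linearly_Disjoint_Algebraic_Element} the constant subextension descends to an honest Kummer extension of $\tk(\zeta_M)$.

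Next I would analyze that constant Kummer extension over $\tk(\zeta_M)$ more finely. The key point is that although $H$ is generated by $G_0$ and the powers $c_i^{\ell^{\max(n-D_i,0)}}$, the \emph{structure} of $\Gal(\tk(\zeta_M,\sqrt[\ell^n]{H})/\tk(\zeta_M))$ is not simply a product over the $c_i$ taken naively, because the $c_i$ may already be $\ell$-divisible inside $\tk(\zeta_M)$ relative to $G_0$ and the later $c_j$'s. This is exactly what the parameters $d_i$ from Definition \ref{di} measure: $d_i$ records the $\ell$-divisibility of the class of $c_i$ in $\tk(\zeta_M)^\times/\langle G_0, c_j: j>i\rangle$. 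I would process the generators in descending order $i=r,r-1,\ldots,1$, so that at each stage $c_i$ is adjoined over the field already containing $G_0$ and all $c_j$ with $j>i$; Kummer theory over $\tk(\zeta_M)$ then shows that adjoining $\sqrt[\ell^n]{c_i^{\ell^{\max(n-D_i,0)}}}$ contributes a cyclic factor whose order is governed by $\min(D_i,d_i)$ rather than $D_i$ alone. A short computation with exponents shows the resulting cyclic factor is $\mathbb Z/\ell^{\max(n-\min(D_i,d_i),0)}\mathbb Z$: if $d_i\geq D_i$ the $\ell^{\max(n-D_i,0)}$-th power of $c_i$ retains a genuine $\ell^{\max(n-D_i,0)}$-th root deficiency, giving the same factor as in the geometric case; if $d_i<D_i$, part of that root already lies in the base, shrinking the factor to the $\min(D_i,d_i)$ exponent.

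Finally I would assemble the pieces. The factor corresponding to $G_0$ is, by construction, independent of the $c_i$ and yields precisely $\Gal(\tk(\zeta_M,\sqrt[\ell^n]{G_0})/\tk(\zeta_M))$, while each $c_i$ contributes $\mathbb Z/\ell^{\max(n-\min(D_i,d_i),0)}\mathbb Z$; taking the product and using that the ranks add up (equivalently, that the degree formula of Theorem \ref{Main_Theorem_Part_2} is saturated) gives the claimed isomorphism. The main obstacle I anticipate is the bookkeeping in the inductive descending-order argument: one must verify that quotienting by $\langle G_0, c_j: j>i\rangle$ at each stage is compatible with the filtration of the Galois group, so that no hidden relations among the $c_i$ cause the factors to interact. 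The descending order in Definition \ref{di} is designed precisely to make these quotients telescope cleanly, and the crucial check is that the exponent $\max(n-D_i,0)$ applied to $c_i$ interacts with the divisibility $d_i$ exactly as $\max(n-\min(D_i,d_i),0)$, which is the heart of the verification.
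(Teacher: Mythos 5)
There is a genuine gap: your strategy rests on the claim that the Galois group decomposes as a direct product of the maximal constant part and the geometric part, and that is false in exactly the situation this theorem is designed to handle. Take $r=1$, $G_0=\{1\}$, $\alpha_1=c_1\beta_1^{\ell}$ with $\beta_1$ $\ell$-indivisible modulo constants (so $D_1=1$) and $c_1\notin\tk(\zeta_M)^{\times\ell}$ (so $d_1=0$), and let $n\geq 2$. Writing $L=K(\zeta_M)$, one checks $\alpha_1\notin L^{\times\ell}$: from $c_1\beta_1^{\ell}=g^{\ell}$ one gets $g/\beta_1\in L\cap\kbar=\tk(\zeta_M)$, hence $c_1\in\tk(\zeta_M)^{\times\ell}$, a contradiction. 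So $\Gal(L(\sqrt[\ell^n]{\alpha_1})/L)$ is cyclic of order $\ell^{n}$, which is indeed the theorem's answer since $\min(D_1,d_1)=0$. But the maximal constant subextension is $L(\sqrt[\ell]{\alpha_1})=L(\sqrt[\ell]{c_1})$, of degree $\ell$, and the geometric group is $\mathbb Z/\ell^{n-1}\mathbb Z$; the product you propose would give $\mathbb Z/\ell\mathbb Z\times\mathbb Z/\ell^{n-1}\mathbb Z\not\simeq\mathbb Z/\ell^{n}\mathbb Z$. The extension of the constant part by the geometric part does not split here: the cyclic factors in the theorem straddle both parts. For the same reason your final appeal to ``the degree formula of Theorem \ref{Main_Theorem_Part_2} being saturated'' cannot close the argument: your product and the correct group have the same order, so no degree count can distinguish them --- the isomorphism type is precisely what is at stake.

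The same confusion shows up in your exponent bookkeeping. Adjoining $\sqrt[\ell^n]{c_i^{\ell^{\max(n-D_i,0)}}}$, i.e.\ essentially $\sqrt[\ell^{\min(n,D_i)}]{c_i}$, to the constant field already containing the roots coming from $G_0$ and the $c_j$ with $j>i$ yields a cyclic extension of degree $\ell^{\max(\min(n,D_i)-d_i,0)}$, which stays \emph{bounded} as $n\to\infty$; it is not $\ell^{\max(n-\min(D_i,d_i),0)}$. The factor $\mathbb Z/\ell^{\max(n-\min(D_i,d_i),0)}\mathbb Z$ is the order of the Kummer class of the whole element $\alpha_i=c_i\beta_i^{\ell^{D_i}}$ in $L^{\times}/L^{\times\ell^n}$, not of its constant part $c_i$ over $\tk(\zeta_M)$. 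Your descending-order processing of the indices is the right instinct (the paper does this too), but the correct mechanism is different: the paper first renormalizes the good basis (multiplying $\alpha_i$ by elements of $G_0$ and by powers of $\alpha_j$ with $j>i$, which leaves $D_i$ unchanged because $D_i\leq D_j$) so that $d_i$ becomes the plain $\ell$-divisibility of $c_i$ in $\tk(\zeta_M)^{\times}$; then each $L(\sqrt[\ell^n]{\alpha_i})/L$ is cyclic of degree $\ell^{\max(n-\min(D_i,d_i),0)}$, and the product decomposition is obtained from the radical correspondence by verifying that no power $\alpha_i^{x}\notin L^{\times\ell^n}$ lies in $\langle G_0,\alpha_j: j>i\rangle L^{\times\ell^n}$, via a case analysis on $d_i\geq D_i$ versus $d_i<D_i$ using Theorem \ref{Theorem_Equivalent_ell_Independent}. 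That Kummer-theoretic independence check, absent from your outline, is the heart of the proof and cannot be replaced by the splitting you assume.
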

	\begin{proof}
		By Lemma \ref{Lemma_Linearly_Disjoint_Algebraic_Element} applied to the monogenic Kummer extension $\tk(\zeta_M, \sqrt[\ell^n]{G_0})/\tk(\zeta_M)$ we have 
		$$\Gal(\tk(\zeta_{M}, \sqrt[\ell^{n}]{G_0})/\tk(\zeta_{M}))\simeq \Gal(K(\zeta_{M}, \sqrt[\ell^{n}]{G_0})/K(\zeta_{M}))\,.$$
		By Lemma \ref{same-D} the $\ell$-divisibility parameters of $G_0$ over $K$ are the same over $K(\zeta_{M})$ and by Proposition \ref{Proposition_Constant_Field_Root_of_Unity} the constant field of $K(\zeta_{M})/\tk$ is $\tk(\zeta_{M})$. We then work over $K(\zeta_{M})/\tk(\zeta_{M})$.
		
		Notice that multiplying $\alpha_i$ by an element of $G_0$ does not change the $\ell$-divisibility parameter. Also, since the $\beta_i$'s are $\ell$-independent modulo constants and since we assume $D_i\leq D_j$ for $i<j$, then we can replace $\alpha_i$ by $\alpha_i \alpha_j^e$ for any $e\in \mathbb Z$ without altering the $i$-th $\ell$- divisibility parameter. We perform such multiplications, modifying the elements $\alpha_r,\alpha_{r-1}, \ldots, \alpha_1$ in this order. In this way, we can achieve that 
		$c_i$ is an $\ell^{D_i}$-th power in $\tk(\zeta_{M})^{\times}$ or that the largest integer $d_i$ such that $c_i$ is an $\ell^{d_i}$-th power in $\tk(\zeta_{M})^{\times}$ is strictly less than $D_i$ and is as in Definition \ref{di}. 
		Consider the tower of Kummer extensions
		$$
		L(\sqrt[\ell^{n}]{G_0}) \subseteq L(\sqrt[\ell^{n}]{G_0},\sqrt[\ell^{n}]{\alpha_r})\subseteq L(\sqrt[\ell^{n}]{G_0},\sqrt[\ell^{n}]{\alpha_r}, \sqrt[\ell^{n}]{\alpha_{r-1}})\subseteq \cdots \subseteq L(\sqrt[\ell^{n}]{G})\,.
		$$
		By definition of $d_i$ (and by Kummer theory over $L$, recalling that $\sqrt[\ell]{\beta_i}\notin L$) the Galois group of 
		$L(\sqrt[\ell^{n}]{\alpha_i})/L$ is isomorphic to $\mathbb Z/\ell^{\max(n-\min(D_i,d_i),0)}\mathbb Z$.
		To conclude, we only need to prove that this extension has the same degree as 
		$$L(\sqrt[\ell^{n}]{\alpha_i},  \sqrt[\ell^{n}]{G_0},\sqrt[\ell^{n}]{\alpha_j}: j>i)/L(\sqrt[\ell^{n}]{G_0},\sqrt[\ell^{n}]{\alpha_j}: j>i)\,.$$
		
		By the radical correspondence of Kummer theory, this is equivalent to saying that there are no integers $x$ satisfying both $$
		\alpha_i^x \mod L^{\times \ell^n} \in \langle G_0, \alpha_j : j>i \rangle \bmod L^{\times \ell^n}\,\quad \text{and}\quad \alpha_i^x\notin L^{\times \ell^n}\,.
		$$
		If $d_i\geq D_i$ we conclude because, by Theorem \ref{Theorem_Equivalent_ell_Independent} (applied to the $\beta_i$'s),
		no integer $z$ satisfies both 
		\begin{equation}\label{beta}
			\beta_i^z \bmod L^{\times \ell^n} \in \langle \tk^\times, \beta_j : j\neq i \rangle \bmod L^{\times \ell^n}\,\quad \text{and}\quad \beta_i^z\notin L^{\times \ell^n}\,.
		\end{equation}
		
		Now suppose that $d_i<D_i$. We have $\alpha_i^x =c_i^{\ell^y}\beta_i^{\ell^{D_i+y}}$ and  without loss of generality, that $x=\ell^y$ for some integer $y \geq 1$.
		If $D_i+y<n$ we conclude because there is no integer $z$ as in \eqref{beta}. Now suppose that $D_i+y\geq n$
		and hence $\alpha_i^x\equiv c_i^x \bmod L^{\times \ell^n}$.
		We are left to prove that we cannot have
		$$
		c_i^x \bmod L^{\times \ell^n} \in \langle G_0, \alpha_j : j>i \rangle \bmod L^{\times \ell^n}\,\quad \text{and}\quad c_i^x\notin L^{\times \ell^n}\,.
		$$
		Notice that, since $\tk(\zeta_{M})$ is the constant field of $L$, for any $\gamma\in \tk(\zeta_{M})^\times$, we have $\gamma\in L^{\times \ell^n}$
		if and only if 
		$\gamma\in \tk(\zeta_{M})^{\times \ell^n}$. We claim that if
		$c_i^x \bmod L^{\times \ell^n}$ is in $\langle G_0, \alpha_j : j>i \rangle \bmod L^{\times \ell^n}$, then it is in $\langle G_0, c_j : j>i \rangle \bmod L^{\times \ell^n}$.
		This will conclude because no integer $x$ can satisfy both 
		$$
		c_i^x \bmod \tk(\zeta_{M})^{\times \ell^n} \in \langle G_0, c_j : j>i \rangle \bmod \tk(\zeta_{M})^{\times \ell^n}\,\quad \text{and}\quad c_i^x\notin \tk(\zeta_{M})^{\times \ell^n}.
		$$
		Indeed, by  construction, the divisibility of $c_i$ is the same in $\tk(\zeta_{M})^{\times}$ as the one of its image in the quotient group $\tk(\zeta_{M})^{\times} /\langle G_0, c_j : j>i \rangle$.
		The claim holds  
		because Theorem \ref{Theorem_Equivalent_ell_Independent}  (applied to the $\beta_i$'s) gives that an integer $z$ such that $\alpha_j^z \bmod L^{\times \ell^n}$ is in $\langle \tk, \alpha_h : h\neq j \rangle \bmod L^{\times \ell^n}$ must satisfy   $v_\ell(z)+D_i\geq n$, and hence $\alpha_j^z \equiv c_j^z \bmod L^{\times \ell^n}$.
	\end{proof}
	
	\begin{rem}\label{zero}
		The extension $\tk(\zeta_M, \sqrt[\ell^n]{G_0})/\tk(\zeta_M)$ is finite and separable, hence, monogenic. Therefore, by Lemma \ref{Lemma_Linearly_Disjoint_Algebraic_Element} we have
		$$
		[K(\zeta_M, \sqrt[\ell^n]{G_0}) : K(\zeta_M)] = [\tk(\zeta_M, \sqrt[\ell^n]{G_0}) : \tk(\zeta_M)].
		$$
	\end{rem}
	
	\begin{exa}
		We can apply the above results multiple times if $\tk$ is a function field. For example, let $K=\tk(S)$ where $\tk=\mathbb Q(T)$. Then by Theorem \ref{Main_Theorem_Part_2} we may first reduce to $\mathbb Q(T)$ and then to $\mathbb Q$. Alternatively, we can see $K$ as a function field over $\mathbb Q$ and apply Theorem \ref{Main_Theorem_Part_2} to reduce directly from $K$ to $\mathbb Q$. This is possible because the Kummer extensions $ K(\zeta_{M}, \sqrt[N]{G})/K(\zeta_{M})$ do not depend on the choice of the constant field.
	\end{exa}
	
	\begin{exa}
		We use the notation of Example \ref{exa-kt}. Fix two distinct monic irreducible polynomials $P_1$ and $P_2$ and let $c_1,c_2\in k^\times$. We consider the group 
		$$G=\langle c_1 P_1 , c_2 P_1P_2^2\rangle \,.$$
		Both generators are $2$-indivisible modulo constants. We also have $G=\langle \alpha_1, \alpha_2 \rangle$, where 
		$$\alpha_1:=(c_1c_2) (P_1P_2)^2 \qquad \text{and} \qquad \alpha_2:= c_2 P_1P_2^2\,.$$
		The latter is a $2$-good basis because $P_1P_2$ and 
		$P_1P_2^2$ are $2$-independent modulo constants as they generate the same group as $P_1$,$P_2$. The $2$-divisibility parameters modulo constants are $D(\alpha_1)=1$ and $D(\alpha_2)=0$.
		We have $K(\sqrt{G})=K(\sqrt{c_1c_2}, \sqrt{c_2P_1})$. Thus, for any $n\geq 1$, we have
		$$
		\Gal(K(\zeta_{2^n},\sqrt[2^n]{G})/K(\zeta_{2^n})) \simeq
		\begin{cases}
			\mathbb Z/2^{n-1} \mathbb Z\times \mathbb Z/2^{n} \mathbb Z &\text{ if } c_1c_2\in \tk^{\times 2},\\
			\mathbb Z/2^{n} \mathbb Z\times \mathbb Z/2^{n} \mathbb Z &\text{ if } c_1c_2 \notin \tk^{\times 2}.
		\end{cases}
		$$
	\end{exa}
	
	\section*{Acknowledgment}
	
	The project started when the first author was supported by a PIMS postdoctoral fellowship at the University
	of Lethbridge and finished at the Universit\'e du Luxembourg where he was a postdoctoral researcher supported by the Fonds National de la Recherche, Luxembourg (17921905). We thank Jean Gillibert, Fritz Hörmann,  Qing Liu and Gabor Wiese for helpful discussions.


\begin{thebibliography}{99}
		
		\bibitem{ACPPP}
		B.~Advocaat, C.W.~Chan, A.~Pajaziti, F.~Perissinotto and A.~Perucca: \emph{Galois groups of Kummer extensions of number fields}, to appear in the Publ. Math. Besan\c con.
		
		\bibitem{DebryPerucca} 
		C.~Debry and A.~Perucca, \emph{Reductions of algebraic integers}, J. Number Theory {\bf 167} (2016), 259--283.
		
		\bibitem{lang_2002}
		S.~Lang, \emph{Algebra}, Revised third edition, Grad. Texts in Math., 211, Springer-Verlag, New York, 2002. xvi+914 pp. ISBN: 0-387-95385-X.

		\bibitem{padic}  
		F.~Perissinotto and A.~Perucca, \emph{Kummer theory for p-adic fields}, preprint.
		
		\bibitem{VillaSalvador_2006}
		G.D.~Villa Salvador, \emph{Topics in the Theory of Algebraic Function Fields}, Math. Theory Appl.
		Birkhäuser Boston, Inc., Boston, MA, 2006, xviii+652 pp.
		
		\bibitem{Zariski_Samuel}
		O. Zariski and P. Samuel, \emph{Commutative algebra. {V}ol. {II}}, The University Series in Higher Mathematics, D. Van Nostrand Co., Inc., Princeton, N.J.-Toronto-London-New York, 1960, x+414 pp., MR(120249).
		
	\end{thebibliography}
\end{document}